\numberwithin{equation}{section}
\numberwithin{figure}{section}
\theoremstyle{plain}
\newtheorem{thm}{Theorem}[section]
\newtheorem{lem}[thm]{Lemma}
\newtheorem{defn}[thm]{Definition}
\newtheorem{prop}[thm]{Proposition}
\newtheorem{rmk}[thm]{Remark}
\begin{document}

\title{Distinguishing between exotic symplectic structures}
\author{Richard M. Harris}
\address{DPMMS, Centre for Mathematical Sciences, Wilberforce Road, Cambridge,
CB3 0WB.}
\email{rrricharrrd@gmail.com}

\begin{abstract}
We investigate the uniqueness of so-called exotic structures on certain exact symplectic manifolds by looking at how their symplectic properties change under small nonexact deformations of the symplectic form.  This allows us to distinguish between two examples based on those found in \cite{maydanskiy,maydanskiyseidel}, even though their classical symplectic invariants such as symplectic cohomology vanish.  We also exhibit, for any $n$, an exact symplectic manifold with $n$ distinct but exotic symplectic structures, which again cannot be distinguished by symplectic cohomology.
\end{abstract}

\maketitle

\renewcommand{\AA}{\mathcal{A}}
\newcommand{\BB}{\mathcal{B}}
\newcommand{\DD}{\mathbb{D}}
\newcommand{\FF}{\mathcal{F}}
\newcommand{\JJ}{\mathcal{J}}
\newcommand{\jj}{\mathbb{J}}
\newcommand{\PP}{\mathcal{P}}
\newcommand{\LL}{\mathcal{L}}
\newcommand{\WW}{\mathcal{W}}
\newcommand{\UU}{\mathcal{U}}
\newcommand{\MM}{\mathcal{M}}
\newcommand{\CC}{\mathbb{C}}
\newcommand{\RR}{\mathbb{R}}
\newcommand{\ZZ}{\mathbb{Z}}
\newcommand{\bj}{\mathbf{J}}
\newcommand{\dbar}{\bar{\partial}_{\bj}}
\renewcommand{\o}{\omega}
\newcommand{\mo}{(M,\o)}
\newcommand{\ainf}{A_{\infty}}
\newcommand{\vdim}{\operatorname{vdim}}
\newcommand{\supp}{\operatorname{supp}}
\newcommand{\ppk}{\PP^p_{k}}
\newcommand{\lpk}{L^p_{k;loc}}
\newcommand{\wpk}{W^p_{k;loc}}
\newcommand{\rk}{\operatorname{rank}}
\newcommand{\im}{\operatorname{im}}
\newcommand{\Id}{\operatorname{Id}}
\newcommand{\End}{\operatorname{End}}

\section{Introduction}
This paper concerns the uniqueness of exact symplectic structures on Liouville
domains, an area which has seen considerable recent development.  In many
situations, such as those coming from cotangent bundles or affine varieties, a
Liouville domain $M$ carries what is considered to be a ``standard'' symplectic
form.  As we shall recap in this introduction, there are now known to be many
examples of Liouville domains with exact symplectic forms which are not
equivalent to the standard ones.  Any such form will be called ``exotic'' in
this paper.

Historically, Gromov \cite{gromov} was the first to exhibit a nonstandard exact
symplectic structure on Euclidean space, although, whereas the standard
symplectic structure is Liouville, Gromov's is not known to be (see Section
\ref{sec:lef} for the relevant definitions).  The first exotic structures on
$\RR^{4n}$ (for $4n \geq 8$) known to be Liouville were discovered by
Seidel-Smith \cite{seidelsmithramanujam}, later extended by McLean
\cite{mcleanlefschetz} to cover all even dimensions greater than 8.  McLean
actually found infinitely many such pairwise-distinct nonstandard symplectic
structures, which were all distinguished by considering their symplectic
cohomology $SH^*(M)$.

More recently, Fukaya categorical techniques have been used by
Maydanskiy-Seidel \cite{maydanskiyseidel} (refining earlier work of Maydanskiy
\cite{maydanskiy}) to find exotic symplectic structures on $T^*S^n$ (for $n
\geq 3$).  These are shown to be nonstandard by proving that they contain no
homologically essential exact Lagrangian $S^n$, in contrast to the zero-section
for the standard symplectic form.  Similar results have also been obtained
using the work of Bourgeois-Ekholm-Eliashberg \cite{bee} again using
symplectic/contact cohomology-type invariants.  Such results have been further
extended by Abouzaid-Seidel \cite{abouzaidseidelrecombination} to show the
existence of infinitely many distinct exotic structures on any affine variety
of real dimension $\geq 6$, again distinguished using symplectic cohomology.

In this paper, we shall consider six-dimensional symplectic manifolds of the
types considered by Maydanskiy \cite{maydanskiy} and Maydanskiy-Seidel
\cite{maydanskiyseidel}.  In \cite{maydanskiyseidel}, infinitely many ways are
presented of constructing a nonstandard $T^*S^3$, but the question of whether
all these constructions actually yield symplectically distinct manifolds is
left open.  We shall not answer that question, but instead we shall consider
what happens when we add a 2-handle to such an exotic $T^*S^3$.  The result
will be diffeomorphic to a manifold constructed in \cite{maydanskiy}, which
again contains no exact Lagrangian $S^3$.

Specifically, we shall consider the manifolds given by the diagrams in Figure
\ref{fig:main}. The meaning of such diagrams will be explained in Section
\ref{sec:lef}. Briefly, our main method of constructing symplectic manifolds
$E^6$ will be as Lefschetz fibrations over $\CC$.  To run this construction,
the input data consists of a symplectic manifold $M^4$ and an ordered
collection of Lagrangian spheres in $M^4$ (see Lemma \ref{thm:build}).  In
Figure \ref{fig:main}, we can associate to each path some Lagrangian sphere in
a 4-dimensional Milnor fibre, which is our required data.

\begin{figure}[h!]
\[
\xy
(-30,0)*{\xy
(-10,0)*{\times}; (10,0)*{\times}; **\crv{(0,13)};
(-10,0)*{}; (10,0)*{}; **\crv{(0,-13)};
(0,0)*{\times};
\endxy};
(30,0)*{\xy
(-20,0)*{\times}; (10,0)*{\times}; **\dir{-};
(0,0)*{\times};
(-10,0)*{\times};
(20,0)*{\times};
(-20,0); (10,0); **\crv{(-10,-18)&(1,20)};
\endxy};
(-30,-10)*{X_1};
(30,-10)*{X_2};
\endxy
\]
\caption{}
\label{fig:main}
\end{figure}

Each of these spaces is diffeomorphic to $T^*S^3 \cup 2$-handle.  There is a
standard way of attaching a 2-handle to $T^*S^3$ \cite{weinsteinhandle} such
that we still get an exact Lagrangian sphere inherited from the zero-section.
However, neither $X_1$ nor $X_2$ contains such a sphere, so are considered
exotic.  In addition, $X_1,X_2$ both have vanishing symplectic cohomology.
This is not proved in \cite{maydanskiy,maydanskiyseidel} and so we include this
calculation in Section \ref{sec:sh}, and has the consequence (already proven
for $X_1$ in \cite{maydanskiy}) that $X_1$ and $X_2$ actually contain no exact
Lagrangian submanifolds (such symplectic manifolds are sometimes called
``empty'').  Despite the usual collection of invariants being insufficient to
distinguish these two manifolds, we shall nevertheless prove

\begin{thm}
\label{thm:main}
$X_1$ and $X_2$ are not symplectomorphic.
\end{thm}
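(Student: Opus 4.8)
By hypothesis both $X_1$ and $X_2$ have vanishing symplectic cohomology and contain no exact Lagrangian submanifolds, so the Fukaya-categorical and symplectic-cohomology invariants built from the exact geometry are trivial for both and cannot separate them; the idea is to perturb out of the exact world. For each $i$ I would fix a closed $2$-form $\beta_i$ on $X_i$ with $[\beta_i]\neq 0$ in $H^2(X_i;\RR)$ and consider the family $\omega_i^t = \omega_i + t\beta_i$, which is symplectic and (after smoothing corners) convex at infinity for $t \in [0,\varepsilon)$. I then want a symplectic invariant $\Phi_t(X_i)$ of $(X_i,\omega_i^t)$ that depends only on the deformation class of $\omega_i^t$ among convex-at-infinity symplectic forms. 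The logical structure is: a symplectomorphism $\phi\colon X_1 \to X_2$ would identify $H^2(X_2)$ with $H^2(X_1)$, carrying the cone of convex-at-infinity classes over; choosing $\beta_2$ with $[\beta_2] = (\phi^{-1})^*[\beta_1]$, the deformations $(X_1,\omega_1^t)$ and $(X_2,\omega_2^t)$ would become deformation-equivalent, forcing $\Phi_t(X_1) = \Phi_t(X_2)$. So it is enough to produce one matched pair $(\beta_1,\beta_2)$ and one small $t$ with $\Phi_t(X_1)\neq\Phi_t(X_2)$.

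\textbf{The invariant.} The candidates for $\Phi_t$ are Floer-type quantities that vanish identically at $t=0$ but are switched on by the nonexact term. The concrete version I would try first: find in each $X_i$ a closed Lagrangian $L_i$ on which $[\beta_i]$ is cohomologically trivial, so that after a Moser correction $L_i$ persists as a Lagrangian $L_i^t$ for $\omega_i^t$; for a good choice of $\beta_i$ this $L_i^t$ bounds holomorphic discs of positive area and is monotone, so that $HF^*(L_i^t,L_i^t)$ --- equivalently the obstruction term $\mathfrak{m}_0$, i.e.\ the leading coefficient of its disc potential --- is defined, and I set $\Phi_t(X_i)=HF^*(L_i^t,L_i^t)$. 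Such an $L_i$ should come from the Lefschetz fibration of Figure \ref{fig:main}: tracing a Lagrangian sphere in the $4$-dimensional Milnor fibre (a plumbing of copies of $T^*S^2$) around a suitable loop in the base produces a closed Lagrangian $S^1\times S^2$ (or a Lagrangian torus), which is typically nonexact --- hence invisible to exact Floer theory --- but becomes monotone after the deformation. If a single $HF^*$ is not enough, one instead compares the $A_\infty$-structure on a finite collection of such Lagrangians, or uses the deformed symplectic cohomology $SH^*(X_i,\omega_i^t)$ over a Novikov ring, where the same ``deformation turns on new terms'' mechanism operates.

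\textbf{The computation.} Computing $\Phi_t$ then amounts to counting holomorphic discs in a $6$-dimensional Lefschetz fibration, which one does by degenerating along the fibration: a holomorphic disc with boundary on $L_i^t$ either lies in a fibre --- where the count is controlled by the $T^*S^2$-plumbing geometry --- or projects to a disc in the base through critical values and is assembled from vanishing thimbles, as in Seidel's TQFT description of Floer theory for Lefschetz fibrations. The outcome is that $\Phi_t(X_i)$ is determined by the combinatorics of the vanishing-cycle configuration, and this data is genuinely different for the two pictures in Figure \ref{fig:main} (different numbers of critical points, different intersection patterns, inequivalent monodromies). The target statement is a choice for which the counts differ decisively --- for instance $HF^*(L_1^t,L_1^t)\neq 0$ while $HF^*(L_2^t,L_2^t)=0$.

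\textbf{Main obstacle.} Two points are genuinely hard. First, Floer theory for the \emph{nonexact} forms: one must establish Gromov compactness for the disc moduli spaces in a manifold that is only convex at infinity and no longer exact --- this is exactly where monotonicity of $L_i^t$ is indispensable --- and prove $\Phi_t$ is independent of the almost complex structures and perturbations, depending only on the deformation class, with enough continuity in $t$ that a single small $t$ suffices. Second, the disc count: making the fibration degeneration rigorous (transversality and gluing for the broken configurations, in real dimension $6$) and extracting a closed combinatorial answer is the technical heart. A smaller but essential point is the $H^2$ and convex-cone bookkeeping of the first paragraph --- verifying that a hypothetical symplectomorphism really does carry a chosen deformation of $X_1$ to a deformation of $X_2$ of the kind for which $\Phi_t$ is defined and computable.
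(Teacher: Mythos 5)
Your opening paragraph captures the paper's governing idea exactly: deform nonexactly, note that $H^2(X_i;\RR)=\RR$ makes the deformation direction essentially unique, and use a Moser-type argument to transport the deformation along a hypothetical symplectomorphism $\phi\colon X_1\to X_2$, so that it suffices to exhibit a deformation invariant that differs. That bookkeeping is indeed Lemma \ref{thm:moser} and the argument in Section \ref{sec:end}, and you have reconstructed it correctly.

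Where your proposal diverges, and where the genuine gap lies, is in the choice and verification of the invariant. You propose $\Phi_t(X_i)=HF^*(L_i^t,L_i^t)$ for a closed Lagrangian $L_i\cong S^1\times S^2$ (or torus) that survives the deformation and becomes monotone, and then assert that the disc counts must differ because the vanishing-cycle combinatorics differ. That assertion is precisely the hard step, and you give no mechanism for it; the whole difficulty of the problem is that the standard invariants --- including the ones you would naturally cook up from the vanishing-cycle data --- coincide for both $X_i$ (zero symplectic cohomology, no exact Lagrangians). There is no reason offered why the Floer cohomology of such an $L_i^t$ should be nonzero for one and zero for the other, nor that the proposed $L_i$ is monotone for the specific deformations in play, nor that the degeneration-to-fibre computation closes up. The paper instead uses a crisper, more qualitative invariant $\Gamma_1(E,\o)\subset\mathbb{P}(H^2(E;\RR))$: the set of deformation directions after which no homologically essential Lagrangian sphere exists. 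The distinguishing phenomenon is then concrete and surprising: the deformation \emph{creates} a Lagrangian $S^3$ in $X_1$. This is proved via the $S^1$-symmetric $A_2$ Milnor-fibre model, where the obstruction to the straight matching path defining a genuine matching cycle is purely homological and is killed by choosing the deforming $2$-forms $\eta,\eta'$ so that $\eta(A)=-\eta'(B)$; the two resulting vanishing cycles are then Hamiltonian isotopic (Lemma \ref{thm:hamiltopic}), and Lemma \ref{thm:amp} produces the sphere. On the $X_2$ side one must re-run the entire Maydanskiy--Seidel wrapped-Floer / Fukaya-categorical argument in a nonexact setting to rule such a sphere out (Theorem \ref{thm:maydanskiyseidel}), which occupies Sections \ref{sec:floer} and \ref{sec:maydanskiyseidel}. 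Your sketch does not engage with either of these two halves; in particular, the nontrivial analytic content (defining $HF$ with $\o$-tame but not compatible almost complex structures, excluding sphere and disc bubbles under $c_1=0$, using the thimble/sphere structure to control discs) is invoked only as an ``obstacle,'' not as something you actually resolve. So the strategy paragraph is right, but as it stands the proposal has no proof that the two deformed manifolds actually differ.
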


We shall then extend our methods to prove

\begin{thm}
\label{thm:many}
Pick any $n \geq 1$.  Then there exists a manifold $M$ (diffeomorphic to
$T^*S^3$ with $n$ 2-handles attached), which supports exact symplectic forms
$\o_1, \ldots, \o_{n+1}$ such that, with respect to each $\o_i$, $(M, \o_i)$ is
Liouville and contains no exact Lagrangian submanifolds, but such that there
exists no diffeomorphism $\phi$ of $M$ such that $\phi^*\o_i = \o_j$ for $i
\neq j$.
\end{thm}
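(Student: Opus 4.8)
The plan is to realise all $n+1$ symplectic forms from a single Lefschetz-fibration recipe, to verify the ``empty Liouville'' property uniformly, and then to separate the forms by recording which Lagrangian spheres become Floer-theoretically visible after a small \emph{non-exact} deformation.

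First I would construct $M$ together with the forms $\omega_1,\dots,\omega_{n+1}$. Using Lemma~\ref{thm:build}, each $(M,\omega_i)$ is obtained as the total space of a Lefschetz fibration over $D^2$ with a fixed $4$-dimensional Milnor fibre $F$ and an ordered tuple of vanishing cycles (Lagrangian spheres in $F$) equal to one of $n+1$ explicit configurations $\mathbf{V}^{(1)},\dots,\mathbf{V}^{(n+1)}$. These are modelled on the two pictures of Figure~\ref{fig:main}: all of them share a common ``core'' chain of vanishing cycles, and differ only in how the remaining cycles are braided past that chain. A handle-calculus computation (of the kind sketched for $X_1,X_2$ in Section~\ref{sec:lef}) then shows that the underlying smooth manifolds are all diffeomorphic to $T^*S^3$ with $n$ $2$-handles attached. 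Each $(M,\omega_i)$ is Liouville because a Lefschetz fibration over $D^2$ with exact fibre and exact vanishing cycles is, and $SH^*(M,\omega_i)=0$ by (a mild generalisation of) the computation of Section~\ref{sec:sh}. As in Theorem~\ref{thm:main}, vanishing of $SH^*$ already forces $(M,\omega_i)$ to contain no exact Lagrangian submanifold --- via the closed--open map $SH^*(M,\omega_i)\to HH^*(\WW(M,\omega_i))$, an exact Lagrangian would be a non-zero object and the unit of $SH^*$ could not vanish --- so at this stage all the $(M,\omega_i)$ look alike to the classical invariants.

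The separating step is to deform. For a class $b\in H^2(M;\RR)$ fix a closed representative $\beta_b$ supported away from the contact boundary, so that for small $\epsilon>0$ the form $\omega+\epsilon\beta_b$ is still symplectic and still convex at infinity. To an exact Liouville $(M,\omega)$ I attach the subset
\[
\mathcal{S}(M,\omega)=\bigl\{\,b\in H^2(M;\RR)\;:\;(M,\omega+\epsilon\beta_b)\text{ contains a Lagrangian sphere }L\text{ with }HF^*(L,L)\neq0\text{ for all small }\epsilon>0\,\bigr\},
\]
where the Floer homology is taken over the Novikov field, so that for $L\cong S^3$ the obstruction cochain automatically vanishes for degree reasons and $HF^*(L,L)$ is always defined. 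The point of the construction is that in $(M,\omega_i)$ there is a sphere built, in the smooth Lefschetz picture, as a union of two thimbles over a matching path, which is \emph{not} Lagrangian for $\omega_i$ but can be made Lagrangian for $\omega_i+\epsilon\beta_b$ precisely when $b$ lies on a distinguished affine hyperplane $H_i\subset H^2(M;\RR)$ --- the equation of $H_i$ being the ($\RR$-linear) symplectic-area defect of that sphere, which records exactly which $2$-handles its matching path links and hence depends on $i$. Once $L$ is Lagrangian one checks $HF^*(L,L)\neq0$: being an $S^3$ it bounds no low-index discs, the remaining bubbling is controlled for $\epsilon$ small using the Lefschetz fibration, and $HF^*(L,L)$ is a deformation of $H^*(S^3)\otimes\Lambda$ which is non-zero. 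Thus $H_i\subseteq\mathcal{S}(M,\omega_i)$. Since $\mathcal{S}$ is natural --- $\phi^*\omega'=\omega$ implies $\phi^*\mathcal{S}(M,\omega')=\mathcal{S}(M,\omega)$ --- it suffices to see that no diffeomorphism of $M$ carries $H_i$ to $H_j$ for $i\neq j$; but $\mathrm{Diff}(M)$ acts on $H^2(M;\RR)\cong\RR^n$ through a small, explicitly computable group (from the Kirby diagram, essentially signed permutations of the $2$-handles), and the configurations $\mathbf{V}^{(i)}$ can be chosen so that $H_1,\dots,H_{n+1}$ lie in distinct orbits. This contradicts $\phi^*\omega_i=\omega_j$, proving the theorem.

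I expect the main obstacle to be the Floer-theoretic input in the previous paragraph. Because all the exact invariants vanish, non-vanishing of $HF^*(L,L)$ for the deformed form cannot be deduced formally: one must genuinely count, or rule out, holomorphic discs and spheres for a \emph{non-exact} symplectic form in a neighbourhood of the critical fibres of the fibration, while simultaneously arranging the representative $\beta_b$ near the contact boundary so that the Liouville structure --- and with it the whole Floer package, including the closed--open map used above --- survives the deformation. A secondary but real difficulty is to compute the $\mathrm{Diff}(M)$-action on $H^2(M;\RR)$ precisely enough to be sure the $n+1$ marked hyperplanes are pairwise inequivalent; this is where the freedom in choosing the braidings $\mathbf{V}^{(i)}$ in the construction has to be spent.
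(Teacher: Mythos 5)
Your construction is essentially the paper's (Lefschetz fibrations over $D^2$ with an $A_{n+1}$ Milnor fibre, two matching paths that differ only in how far one is braided below the middle critical values, vanishing $SH^*$ via the wrapped-Floer argument of Section~\ref{sec:sh}), and your idea of looking at where a nonexact perturbation makes a matching sphere Lagrangian is the right one. But the way you propose to extract a symplectic invariant from that data has a genuine gap, and it is a gap the paper is specifically designed to avoid.

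First, a factual imprecision that matters: for $(M,\omega_i)$ the lifted matching sphere becomes Lagrangian after perturbing in direction $\mathbf{v}$ precisely when the partial sums $\sum_{r\le k}v_r$ are nonzero for all $k\le i$; i.e.\ the ``sphere-producing'' locus is the \emph{complement of a union of $i$ linear hyperplanes}, not a single affine hyperplane $H_i$. So the set $\mathcal{S}(M,\omega_i)$ you define is a Zariski-open subset whose complement is a hyperplane arrangement, and its shape varies with $i$ in exactly the way you want — but you then want to compare these loci up to the action of $\mathrm{Diff}(M)$ on $H^2(M;\RR)$, which you correctly flag as a ``secondary but real difficulty.'' That difficulty is not secondary: as written, your argument needs (a) equality $\mathcal{S}(M,\omega_i)=H^2\setminus(\text{the arrangement})$, not merely one containment, and (b) an explicit computation of the image of $\mathrm{Diff}(M)\to\mathrm{GL}(H^2(M;\RR))$ together with a check that the $n{+}1$ arrangements sit in distinct orbits. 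Neither is supplied, and (b) in particular is the kind of smooth-topology input the paper deliberately does \emph{not} invoke.

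The paper sidesteps all of this by passing from individual deformation directions to \emph{Grassmannians}: it defines $\Gamma_{k}(E,\omega)$ as the set of $k$-planes in $H^2(E;\RR)$ along \emph{every} direction of which no homologically essential Lagrangian sphere appears after a small deformation, and observes that this is a symplectic invariant up to the natural linear equivalence — so, in particular, its \emph{cardinality} is an honest invariant requiring no knowledge of $\mathrm{Diff}(M)$ whatsoever. For $X^j_n$ the complement of the sphere-producing locus is a union of $j$ hyperplanes in $\RR^n$, so $\Gamma_{n-1}(X^j_n)$ is exactly that finite set of $j$ hyperplanes, and $|\Gamma_{n-1}(X^j_n)|=j$ for $1\le j\le n$; for the Maydanskiy--Seidel-type member $X^{n+1}_n$, Theorem~\ref{thm:maydanskiyseidel} shows no deformation produces a sphere, so $\Gamma_{n-1}=Gr_{n-1}(\RR^n)$. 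The pairwise-distinct cardinalities $1,2,\dots,n,\infty$ finish the proof. This numerical invariant is what replaces, and removes the need for, your analysis of $\mathrm{Diff}(M)$-orbits of hyperplanes; without it, your proposal does not close.

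(One smaller point: the paper detects spheres by homological essentiality and intersection number with a thimble, rather than by directly establishing $HF^*(L,L)\ne0$; this makes the ``sphere appears'' half of the argument a transparent existence statement (the deformed matching cycle from Section~\ref{sec:maydanskiy}) and pushes all the Floer analysis into the ``no sphere appears'' half, where the nonexact wrapped-Floer machinery of Sections~\ref{sec:floer}--\ref{sec:maydanskiyseidel} is genuinely needed.)
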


The main technique used in this paper is to consider what happens after a
nonexact deformation of the symplectic structure.  For any 2-form $\beta \in
H^2(X_i ; \RR)$, we can consider an arbitrarily small nonexact deformation of
$\o$ to $\o + \epsilon \beta$.  If this new form is still symplectic, we can
look at the symplectic properties of these new symplectic manifolds.  (In the
case of $X_1$ and $X_2$ above $H^2(X_i;\RR)=\RR$, so Moser's argument tells us
that the way we can perform such a deformation is essentially unique, in a
sense which will be made precise in Section \ref{sec:end}.)  We discover that,
after an arbitrarily small deformation, $X_1$ (which with our original exact
form contains no Lagrangian $S^3$) does in fact contain such a sphere, an
interesting phenomenon in its own right which is explained in Section
\ref{sec:maydanskiy}.

In contrast, after such a deformation, $X_2$ still contains no homologically
essential Lagrangian sphere.  The proof of this fact requires rerunning the
argument of \cite{maydanskiyseidel}, except that somewhat more care needs to be
exercised in the use of Floer cohomology groups, owing to the nonexactness of
our deformed situation.  This is the content of Section
\ref{sec:maydanskiyseidel}.

In general, given a symplectic manifold $M$ (satisfying certain topological
assumptions), we can consider the set $\Gamma_1 \subset
\mathbb{P}(H^2(M;\RR))$, of directions in which we get no homologically
essential Lagrangian sphere inside $M$ after an arbitrarily small deformation
of the symplectic form in that direction.  We show that this is a symplectic
invariant, which completes the proof of Theorem \ref{thm:main}.  Finally, in
Section \ref{sec:many} these ideas are extended to prove Theorem
\ref{thm:many}.

\textsl{Acknowledgements} I would like to thank Paul Seidel whose idea it
originally was to distinguish such exotic manifolds by deforming their
symplectic structures.  I would also like to thank my PhD supervisor Ivan Smith
for many helpful discussions, as well as for useful comments on earlier drafts
of this work.  Thanks also go to Gabriel Paternain, Richard Thomas and the anonymous referee for pointing out several errors in earlier drafts of this paper.  During this research I was partially
supported by European Research Council grant ERC-2007-StG-205349

\section{Lefschetz fibrations}
\label{sec:lef}
In this section, we recall the standard notions of Picard-Lefschetz theory. The
treatment here largely follows that of \cite[Part III]{seidelbible}, but we
shall adapt the presentation there to include certain nonexact symplectic
manifolds, as we want to consider arguments involving nonexact deformations of
our symplectic form.

Let $\mo$ be a noncompact symplectic manifold.  We say $\mo$ is \emph{convex at
infinity} if there exists a contact manifold $(Y,\alpha)$ which splits $M$ into
two parts: a relatively compact set $M^{in}$; and $M^{out}$, which is
diffeomorphic to the positive
symplectization of $(Y,\alpha)$ where, in a neighbourhood of $Y$, we have a
1-form $\theta$ satisfying $d\theta = \o$ and $\theta|_Y=\alpha$.  Such a
contact manifold is canonically identified up to contactomorphism.  If $\theta$
can be defined on the whole of $M$, we call $(M,\theta)$ a \emph{Liouville
manifold}.

Given a compact symplectic manifold with boundary $M$ such that, in a
neighbourhood
of the boundary, we have a primitive $\theta$ of the symplectic form which
makes
the boundary contact, we say $M$ has \emph{convex boundary}.  If $\theta$ is
defined everywhere, $(M,\theta)$ is usually called a \emph{Liouville domain}.
Given a symplectic manifold with convex boundary, we can complete $M$
canonically to get a symplectic manifold convex at infinity,
\[
\widehat{M}=M\cup_{\partial M} [0,\infty) \times \partial M,
\]
with forms $\widehat{\theta} = e^r \theta$ and $\widehat{\o} = d
\widehat{\theta}$ on the collar, where $r$ denotes the coordinate on
$[0,\infty)$.

\subsection{Definition}
Let $(E,\o)$ be a compact symplectic manifold with corners such that, near the
boundary, $\o=d\theta$ for some form $\theta$ which makes the codimension 1
strata contact, and let $\pi \colon E \to S$ be a proper map to a compact
Riemann surface with boundary such that the following conditions hold:

\begin{itemize}
 \item There exists a finite set $E^{crit} \subset E$ such that $D\pi_x$ is a
submersion for all $x \notin E^{crit}$, and such that $D^2\pi_x$ is
nondegenerate for all $x \in E^{crit}$, which means that locally we can find
charts such that $\pi(z) = \sum{z_i^2}$ .  We denote by $S^{crit}$ the image of
$E^{crit}$ and require that $S^{crit} \subset S \setminus \partial S$.  We also
assume, for sake of notational convenience, that there is at most 1 element of
$E^{crit}$ in each fibre.

\item For all $z \notin S^{crit}$ the fibre $E_z = \pi^{-1}(z)$ becomes a
symplectic manifold with convex boundary with respect to $\o|_{E_z}$.  This
means that we get a splitting of tangent spaces
\[
TE_x = TE^h_x \oplus TE^v_x,
\]
where the vertical part $TE^v_x$ is the kernel $\ker(D\pi_x)$ and the
horizontal
part $TE^h_x$ is the orthogonal complement of $TE^v_x$ with respect to $\o$.

\item At every point $x \in E$ such that $z=\pi(x) \in \partial S$, we have $TS
= T(\partial S) + D \pi (TE_x)$.  This implies that $\pi^{-1}(\partial S)$ is a
boundary stratum of $E$ of codimension 1, which we shall call the
\emph{vertical boundary}, denoted $\partial^v E$.  The union of boundary faces
of $E$ not contained in $\partial^v E$ we shall call the \emph{horizontal
boundary} of $E$, denoted $\partial ^h E$.

\item If $F$ is a boundary face of $E$ not contained in $\partial ^v E$, then
$\pi|_F \colon F \to S$ is a smooth fibration, which implies that any fibre is
smooth near its boundary.  We also want the horizontal boundary $\partial^h E$
to be horizontal with respect to the above splitting, so that parallel
transport (see below) will be well-defined along the boundary.
\end{itemize}

\begin{defn}
\label{defn:lef}
If all the above holds we call $(E,\pi,\o)$ a compact convex Lefschetz
fibration.  For ease of notation, in what follows we shall often call
$(E,\pi,\o)$ simply a Lefschetz fibration, suppressing the extra adjectives.
\end{defn}

The splitting of tangent spaces into horizontal and vertical subspaces means
that we have a connection
over $S \setminus S^{crit}$, and hence \emph{symplectic parallel
transport} maps.  In other words, for a path $\gamma \colon [0,1] \to S$ which
misses
$S^{crit}$, our connection defines a symplectomorphism $\phi_{\gamma} \colon
E_{\gamma(0)} \to
E_{\gamma(1)}$.

There is a method \cite{mcleanlefschetz} of completing $E$ to a symplectic
manifold $\widehat{E}$ which is convex at infinity, such that we get a map
$\widehat{\pi} \colon \widehat{E} \to \widehat{S}$ to the completion of the
base.  When $S$ is a disc $\DD$, this is done as follows: firstly, the horizontal boundary $\partial^h
E$ is just $\partial M \times \DD$, where $M$ is a smooth fibre, and we can attach $\partial M \times
[0,\infty) \times \DD$ to $\partial^h E$ in the same as we complete a symplectic
manifold with convex boundary.  This gives us a new manifold we shall call
$E_1$ and we can extend $\pi$ to $\pi_1$ on $E_1$ in the obvious way.  Now
consider $\pi_1^{-1}(\partial \DD)= N$.  Attach to this $N \times [0,\infty)$ and
call the resulting manifold $\widehat{E}$, over which we can extend $\pi_1$ to
$\widehat{\pi}$.  More details can be found in \cite[Section
2]{mcleanlefschetz}.  This map $\widehat{\pi}$ restricts to $\pi$ on the
subsets corresponding to $E$ and $\DD$ and outside we have a local model looking
like the completion of the mapping cone for some symplectic map $\mu$ which we
shall call the \emph{outer monodromy} of the Lefschetz fibration $E$.  Given
this, we shall also talk in this paper about Lefschetz fibrations over $\CC$,
which are understood to be the completions of Lefschetz fibrations over some
disc $\DD_R \subset \CC$, in the sense of Definition \ref{defn:lef}.

\subsection{Vanishing cycles}
We can use the parallel transport maps to introduce the notion
of a \emph{vanishing cycle}. Choose an embedded path $\gamma \colon [0,1] \to
S$ such
that $\gamma^{-1}(S^{crit}) = \{1\}$. We can consider the set of points which
tend
to the critical point $y=\gamma(1)$ under our parallel transport maps
\[
V_{\gamma} =
\left\{  x \in E_{\gamma(0)} : \lim_{t \to 1}
{\phi_{\gamma\left.\right|_{[0,t]}}}(x)=y  \right\}.
\]
This set $V_{\gamma}$ is called the \emph{vanishing cycle} associated to the
\emph{vanishing path} $\gamma$. The vanishing cycle is actually a Lagrangian
sphere in the fibre \cite{seidelles} and if we take the \emph{Lefschetz
thimble}, the union of the images of the vanishing cycle as we move along
$\gamma$ together with the critical point, we get a Lagrangian ball
$\Delta_{\gamma}$ in the total space $E$.  In fact, $\Delta_{\gamma}$ is the
unique embedded Lagrangian ball that lies over $\gamma$. These vanishing cycles
come together with the extra datum of a ``framing'' \cite[Lemma 1.14]{seidelles}, meaning a parametrization
$f \colon S^n \to V$.  Here, two framings $f_1,f_2$ are equivalent if $f_2^{-1}f_1$ can be deformed inside $Diff(S^n)$ to an element of $O(n+1)$, but this framing information is irrelevant in the
dimensions in which we work, so shall neglect to mention framings in what
follows.

\subsection{Constructing Lefschetz fibrations}
Given a Lefschetz fibration $(E,\pi)$, we can pick a smooth reference fibre
$E_z$ and a collection of vanishing paths $\gamma_i$, one for each critical
point,
which all finish at $z$, but which are otherwise disjoint.  This then gives us
a symplectic manifold $M=E_z$ and a
collection of vanishing cycles $V_i \subset M$ associated to the $\gamma_i$.
For our purposes, in
constructing symplectic manifolds, it is important to note that we can go
the other way as in the following lemma, taken from \cite[Lemma
16.9]{seidelbible}
but with unnecessary assumptions of exactness removed.

\begin{lem}
\label{thm:build}
Suppose we have a collection $(V_1,\ldots,V_m)$ of (framed) Lagrangian spheres in a symplectic manifold $M$ with
convex boundary. On the disc $\DD$, choose a base point $z$,
and a distinguished basis of vanishing paths $\gamma_1,\ldots,\gamma_m$ all
of which have one endpoint at $z$. Then there is a compact convex Lefschetz
fibration $\pi
\colon E \to \mathbb{D}$, whose critical values are precisely the endpoints
$\gamma_1(1),\ldots,\gamma_m(1)$; this comes with an identification
$E_z=M$, under which the (framed) vanishing cycles $V_{\gamma_k}$
correspond to $V_k$.
\end{lem}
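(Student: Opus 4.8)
The plan is to reverse the standard construction that, given a compact convex Lefschetz fibration over $\DD$ together with a choice of basepoint and distinguished basis of vanishing paths, produces a fibre $M$ and an ordered tuple of vanishing cycles. Concretely, I would proceed by first treating the local model near a single critical value and then gluing. Over a small disc $\DD_i$ around each prescribed critical value $\gamma_k(1)$, I would use the standard quadratic model: on $\CC^{n+1}$ the map $(z_0,\dots,z_n)\mapsto \sum z_j^2$ near the origin has the $A_1$ Milnor fibre over the regular values, and a neighbourhood of the vanishing cycle $V_k \subset M$ is, after a symplectomorphism, identified with (a piece of) a disc cotangent bundle $D^*S^n$, which is precisely the regular fibre of this quadratic model. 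So over $\DD_i$ I can build a local Lefschetz fibration $E_i \to \DD_i$ whose single vanishing cycle, with respect to the straight radial path, is the given $V_k$ sitting inside this cotangent neighbourhood. The nonexactness of $M$ plays no role here, since the local model lives in a fixed neighbourhood of $V_k$ where the symplectic form can be taken in its standard cotangent form.

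Next I would assemble the global fibration. Choose the distinguished basis of paths $\gamma_1,\dots,\gamma_m$ and a system of small disjoint discs $\DD_i$ around the endpoints $\gamma_k(1)$, together with thin ``corridors'' following the $\gamma_k$ back to the basepoint $z$. Over the complement of the critical discs the fibration should be the trivial bundle $M \times (\text{base region})$ with the product symplectic form $\o_M \oplus \o_{\DD}$ (or a closed extension thereof on the corners), with the connection being the trivial one away from the corridors. The only work is to interpolate, inside each corridor, between the trivial product connection near $z$ and the connection of the local model $E_i$ near $\DD_i$: this is where one must check that parallel transport carries the reference fibre identification into the quadratic model so that the vanishing cycle computed along $\gamma_k$ really is $V_k$. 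The interpolation is carried out by a cutoff in the $S$-direction applied to the difference of the two connection 1-forms (equivalently, the two choices of horizontal distribution), exactly as in the exact case; one simply does not impose any primitive $\theta$ globally, only near $\partial E$.

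The last point is to verify the structural axioms of Definition \ref{defn:lef} for the glued object $E$: that $\o$ is closed (immediate, since it is glued from closed forms), that it is symplectic (one needs the fibre form plus a large enough multiple of the pulled-back area form on the base, a standard ``inflation'' trick, keeping the fibres symplectic with convex boundary), that the horizontal boundary $\partial M \times \DD$ is horizontal for the connection (arrange the connection to be trivial near $\partial^h E$, which is possible since parallel transport maps near $\partial M$ can be taken to be the identity), and that the critical points have the required nondegenerate quadratic local form (true by construction). Corners are handled by the usual smoothing. The main obstacle I expect is bookkeeping rather than conceptual: making the three pieces — product region, corridors, quadratic models — match up smoothly while simultaneously (i) keeping the total form symplectic, (ii) keeping every fibre convex at its boundary, and (iii) ensuring the connection is flat near the horizontal boundary and has the prescribed holonomy behaviour; in the exact setting \cite[Lemma 16.9]{seidelbible} does all this, and the only modification is to carry the argument out with a merely closed rather than exact form away from $\partial E$, which never enters these local compatibility checks.
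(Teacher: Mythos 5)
Your local ingredients are the right ones (the quadratic model $q(z)=\sum z_i^2$, the Weinstein identification of a neighbourhood of $V_k$ with a disc cotangent bundle $T^*_{\leq\lambda}S^n$, triviality of the connection near the horizontal boundary, and the observation that exactness only enters near $\partial E$), but the global decomposition you propose has a genuine flaw. You place the local model $E_i$ over a small disc $\DD_i$ around each critical value and the \emph{trivial} bundle $M\times(\text{base region})$ with the product form over the complement of the $\DD_i$, matching the two over annuli in the base with a connection interpolation in corridors. This cannot be made to work: the symplectic monodromy of the local model around $\partial\DD_i$ is the model Dehn twist $\tau_{V_k}$, so $E_i|_{\partial\DD_i}$ is the mapping torus of $\tau_{V_k}$ and is not isomorphic, as a fibre bundle over the circle, to the restriction of a product fibration (already for $n=2$, $\tau_{V_k}$ acts by $-\operatorname{Id}$ on $H_2(T^*S^2)$, so it is not even homotopic to the identity). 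Equivalently, the complement of the critical discs contains a loop encircling $\gamma_k(1)$, and any fibration extending $E_i$ must have monodromy $\tau_{V_k}\neq\operatorname{Id}$ around that loop, so the bundle over the complement cannot be trivial; no interpolation of connection $1$-forms on a fixed product bundle, supported in contractible corridors or otherwise, can repair a mismatch of bundle isomorphism type over the overlap.

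The construction of \cite[Lemma 16.9]{seidelbible} and \cite[Proposition 1.11]{seidelles} -- which is exactly what the proof of Proposition \ref{thm:deformbuild} in this paper adapts to the nonexact setting -- glues in the transverse direction: for a single vanishing cycle, the local model $W$ is taken over the \emph{whole} disc, with its critical value at the centre, and is glued to the product $\DD\times\bigl(M\setminus\phi(\mathcal{N}\setminus Z)\bigr)$ along $\DD\times Z$, where $Z$ is a collar of $\partial\bigl(T^*_{\leq\lambda}S^n\bigr)$. This uses only the triviality of $W\to\DD$ near its \emph{horizontal} boundary, where the Dehn twist is the identity, and never requires trivializing the fibration over a loop encircling a critical value; several vanishing cycles are then handled by pasting such one-critical-point fibrations together along the base. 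Your closing verifications (closedness, nondegeneracy after adding a pullback from the base, convexity of the fibres, horizontality of $\partial^h E$, and the fact that removing exactness costs nothing since $\theta$ is only needed near $\partial E$) are the correct ones and would go through for that decomposition.
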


This will be the technique used to construct the symplectic manifolds
considered in this paper. However, in order to do this, we need
to identify a collection of Lagrangian spheres in a given symplectic manifold
$M$. In the case where $M$ itself admits a Lefschetz fibration, we shall do
this by considering
matching cycles.

\subsection{Matching cycles}
Consider a Lefschetz fibration $\pi \colon M \to S$ and an embedded path
$\gamma \colon [0,1] \to S$ such that $\gamma^{-1}(S^{crit}) = \{0,1\}$. In
the fibre $\pi^{-1}(\gamma(\frac{1}{2}))$, we get two vanishing cycles, one
coming from either endpoint. If they agree, then parallel transport allows
us to glue the two thimbles together to obtain a smooth Lagrangian sphere
$V \subset M$. We shall call $\gamma$ a \emph {matching path}, and $V$ the
associated \emph{matching cycle}.

In this paper we shall usually work in situations where the vanishing cycles do
agree exactly so that we do get matching cycles, but
occasionally we will have the situation where the two vanishing cycles are
not equal, but are merely Hamiltonian isotopic.  In this situation we may
appeal to the following result of
\cite[Lemma 8.4]{amp}:

\begin{lem}
\label{thm:amp}
Let $(M,\omega)$ be a symplectic manifold with a Lefschetz
fibration $\pi \colon M \to \CC$ and let $\gamma \colon [0,1] \to \CC$ be a
path such that $\gamma^{-1}(S^{crit}) = \{0,1\}$.  Suppose that the two
vanishing cycles $V_0,V_1 \subset M_{\gamma(\frac{1}{2})}$ coming from either
end of this path are Hamiltonian isotopic for some compactly supported
Hamiltonian $H_s$ defined on the fibre $M_{\gamma(\frac{1}{2})}$.  Then $M$
contains a Lagrangian sphere.
\end{lem}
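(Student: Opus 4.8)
The plan is to run the matching-cycle construction, with the single gluing step — where one usually needs $V_0=V_1$ exactly — replaced by a Lagrangian cobordism built out of the isotopy $H_s$. Write $F=M_{\gamma(1/2)}$ and let $\Delta_0,\Delta_1\subset M$ be the Lefschetz thimbles over $\gamma|_{[0,1/2]}$ and $\gamma|_{[1/2,1]}$, so that $\partial\Delta_0=V_0$ and $\partial\Delta_1=V_1$ inside $F$. Everything outside a small disc neighbourhood $U\subset\CC$ of $\gamma(1/2)$ is exactly as in the exact matching case, so it is enough to modify the construction over $U$. Choose $0<s_-<1/2<s_+<1$ with $\gamma(s_\pm)\in U$; the task becomes to produce an embedded Lagrangian cylinder $C\cong S^n\times[0,1]$ inside $\pi^{-1}(U)$ whose two boundary spheres are the parallel transports of $V_0$ and $V_1$ to the fibres over $\gamma(s_-)$ and $\gamma(s_+)$, and which stays away from the horizontal boundary. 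Gluing the sub-thimbles $\Delta_0'$ (over $\gamma|_{[0,s_-]}$) and $\Delta_1'$ (over $\gamma|_{[s_+,1]}$) onto the two ends of $C$ then gives a Lagrangian $S^n\subset M$.

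To build $C$ I would first trivialise: shrinking $U$ so that it contains no critical value, symplectic parallel transport identifies $\pi^{-1}(U)$ with $F\times U$ in such a way that each slice $F\times\{u\}$ carries $\o|_F$ and the intersection of $\partial^hM$ with $\pi^{-1}(U)$ becomes $(\partial F)\times U$; a Moser argument then brings the transported symplectic form to the shape $\o|_F\oplus\varpi$ up to a correction of order the radius of $U$, where $\varpi$ is an area form on $U$. Under this picture the two boundary conditions we must interpolate are the images of $V_0$ and $V_1=\phi^1(V_0)$ in two product slices, and since $\phi^1$ is generated by the compactly supported $H_s$ (which, after reparametrising time, may be taken with $\sup_s|H_s|$ as small as we like without changing $\phi^1$), the classical Lagrangian suspension of $\phi^s$ applied to $V_0$ is exactly such a cylinder: it is Lagrangian in $F\times T^*[0,1]$, it interpolates $V_0$ and $\phi^1(V_0)$, and it embeds into $F\times U$ because the cotangent extent of the suspension is controlled by $\sup_s|H_s|$ and therefore fits inside $U$. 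Here it is essential that the base is $2$-dimensional: the suspension genuinely spreads out in $U$, the extra base direction absorbing the cotangent coordinate $-H_s\circ\phi^s$.

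Finally, the union $\Delta_0'\cup C\cup\Delta_1'$ is a topological sphere (two balls capping the ends of a cylinder) lying in the compact part of $M$ away from $\partial^hM$; its three pieces overlap only along the two parallel-transported copies of $V_0$ and $V_1$, so it is embedded, and it can be rounded off near those two spheres to an honest embedded Lagrangian $S^n$.

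The main obstacle is the trivialisation step: one must make the normal form over $U$ precise enough that the suspended cylinder, which is manifestly Lagrangian for the model form $\o|_F\oplus\varpi$ on $F\times U$, remains Lagrangian (or is easily corrected) for the actual transported form — in other words, one must play the curvature of the symplectic connection over $U$, which is $O(\text{radius})$, off against the freely adjustable size of $H_s$. The small holonomies coming from this curvature only compose harmlessly with $\phi^s$, so they create no real difficulty; likewise the smoothness and embeddedness at the two gluing spheres are routine. An alternative route, if one is willing to quote it, is the standard fact that the total space furnished by Lemma \ref{thm:build} depends on the ordered tuple of vanishing cycles only up to Hamiltonian isotopy: replacing $V_1$ by $V_0$ throughout produces a symplectomorphic manifold in which $\gamma$ is a genuine matching path, and hence which contains a Lagrangian sphere.
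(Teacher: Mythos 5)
The paper does not prove this lemma; it is quoted verbatim from \cite[Lemma 8.4]{amp}, so there is no internal proof to compare against. Your main route (trivialise near the central fibre, then interpolate the two vanishing cycles by a Lagrangian suspension of the given isotopy) is indeed the idea behind the Auroux--Mu\~noz--Presas argument, so the overall strategy is sound.

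There is, however, a concrete error in the step that is supposed to make the cylinder fit inside $\pi^{-1}(U)$. You assert that ``after reparametrising time, $\sup_s|H_s|$ may be taken as small as we like without changing $\phi^1$.'' This is false. If $\rho\colon[0,1]\to[0,1]$ is a time-reparametrisation then the new Hamiltonian is $\tilde H_s = \rho'(s)H_{\rho(s)}$, and since $\int_0^1\rho'=1$ one cannot have $\sup\rho'<1$; more precisely the best one can achieve over all reparametrisations is $\sup_s\|\tilde H_s\|_\infty=\int_0^1\|H_u\|_\infty\,du$, which is a Hofer-type quantity that is generically strictly positive, and the same positivity persists if one allows changing the generating Hamiltonian altogether, since the Hofer norm of $\phi^1$ is an invariant. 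Consequently the cotangent extent of the suspension is bounded below, and there is no reason the strip should embed in a preassigned small disc $U$. The rest of the argument rests on this claim, so as written the proof does not go through.

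Separately, you acknowledge but do not resolve the mismatch between the actual symplectic form over $\pi^{-1}(U)$ and the product model $\omega|_F\oplus\varpi$; saying the curvature terms ``compose harmlessly with $\phi^s$'' is not an argument. The cleaner way out of both difficulties at once, and what I believe the source reference does, is to \emph{not} try to exhibit a Lagrangian for the given form directly: instead, modify $\omega$ to $\omega'=\omega+d\tau$ for a $1$-form $\tau$ (schematically $\tau=-H\cdot\beta$ for a compactly supported base $1$-form $\beta$ along $\gamma$, supported away from the horizontal boundary) so that the new connection's parallel transport along $\gamma$ absorbs $\phi^1$. Then $\gamma$ is a genuine matching path for $(M,\omega')$, and $(M,\omega)\cong(M,\omega')$ by Moser, provided $\omega+t\,d\tau$ remains symplectic for $t\in[0,1]$; since $d\tau$ is decomposable and one can choose $\tau$ so that the dangerous $\partial_p$-derivative of the generating function is controlled, this is arrangeable. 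Your ``alternative route,'' quoting independence of the total space from the vanishing-cycle data up to Hamiltonian isotopy, is morally exactly this, but in the present setting the Lefschetz fibration $\pi\colon M\to\CC$ is \emph{given} rather than built by Lemma~\ref{thm:build}, so you would first need to identify $M$ with such a construction, and the independence statement itself encodes precisely the Moser argument you are trying to avoid. In short: right strategy, but the reparametrisation claim is false and the normal-form issue needs to be addressed via a Moser deformation of the ambient form rather than a direct geometric construction.
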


Matching cycles will be used for our main method of construction. We take a
symplectic manifold $\mo$ equipped with a Lefschetz fibration and consider
an ordered collection of matching paths. In favourable circumstances these
will give rise to a family of framed Lagrangian spheres $(V_1, \ldots ,
V_n) \subset M$ and we now apply Lemma \ref{thm:build} to construct a new
Lefschetz
fibration $(E,\pi)$.

\subsection{Maydanskiy's examples}
Figure \ref{fig:maydanskiy} shows the examples considered in \cite{maydanskiy}.
Although higher-dimensional examples are also considered in \cite{maydanskiy},
the meaning of all such diagrams in this paper is that we take the
symplectic manifold $M^4$ built according to Lemma \ref{thm:build} by taking
fibre $T^*S^1$ and vanishing cycles given by the zero-section, one for each
cross. The lines in Figure \ref{fig:maydanskiy} are then matching paths which
yield the spheres
required to apply Lemma \ref{thm:build} again to obtain $E^6$.  The fact that
the paths in Figure \ref{fig:maydanskiy} actually do give matching cycles will
for us be a consequence of the method of construction considered in the next
section.

\begin{figure}[h!]
\[
\xy
(35,0)*{\xy
(-10,0)*{\times}; (10,0)*{\times}; **\crv{(0,13)};
(-10,0)*{}; (10,0)*{}; **\crv{(0,-13)};
(0,0)*{\times};
\endxy};
(-35,0)*{\xy
(-10,0)*{\times}; (10,0)*{\times}; **\crv{(0,13)};
(-10,0)*{}; (10,0)*{}; **\crv{(0,-13)};
(0,10)*{\times};
\endxy};
(35,-14)*\txt{$X_1$ -contains no Lagrangian $S^3$};
(-35,-14)*\txt{$X'_1$ -contains a Lagrangian $S^3$};\endxy
\]
\caption{}
\label{fig:maydanskiy}
\end{figure}

Maydanskiy \cite{maydanskiy} proves that the two symplectic manifolds in Figure
\ref{fig:maydanskiy} are diffeomorphic (they are both $T^*S^3 \cup 2$-handle)
but are not symplectomorphic.  $X'_1$ is just $T^*S^3$ with a Weinstein
2-handle attached as in \cite{weinsteinhandle} and contains an exact Lagrangian
sphere inherited from the zero-section of $T^*S^3$.  In contrast, $X_1$
contains no exact Lagrangian submanifolds, and so is considered exotic.

One way of thinking about this intuitively is that
the manifolds are diffeomorphic because one can construct a smooth isotopy
taking
the top matching cycle in $X_1$ and moving it over the critical
point in the middle to get $X'_1$.  The reason this fails to work
symplectically is that we are free to move our cycles only by Hamiltonian
isotopies,
and we will not then be able to avoid the central critical point (since
we cannot displace the zero-section of $T^*S^1$), although the actual proof in
\cite{maydanskiy} has to make use of more sophisticated Floer-theoretic
arguments.

\section{Deformations of symplectic structures}
\label{sec:maydanskiy}

\begin{defn}
Let $(E,\o)$ be a symplectic manifold.  By a deformation of the symplectic
structure $(E,\o)$ we shall mean a
smooth 2-form $\Omega$ on $\tilde{E}=E
\times [0,1]$ such that
\begin{itemize}
\item $\Omega|_t$ is symplectic on each $E \times \{t\}$
\item $\Omega|_0=\o$
\item $\iota_v\Omega=0$ for any $v \in \ker(D\rho)$ where $\rho$ is the
projection $\tilde{E} \to E$.
\end{itemize}
This is equivalent to a smooth 1-parameter family of symplectic forms
$\left\{\o_t : t\in[0,1] \right\}$ on $E$ such that $\o_0=\o$.  We shall denote
by $(\tilde{E}^t,\o_t)$ the symplectic manifold $(E \times \{t\},\Omega|_t)$.
\end{defn}

We shall consider $X_1$, the exotic example of Maydanskiy from the previous
section. In this section, we shall prove

\begin{thm}
\label{thm:maydanskiy}
There is a deformation $\tilde{X}_1$ of $X_1$ such that, for all $t >0$,
$\tilde{X}_1^t$ contains a Lagrangian sphere.
\end{thm}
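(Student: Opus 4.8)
The plan is to exploit the mechanism hinted at in Section~\ref{sec:lef}: although the two vanishing cycles lying over the matching path in the fibre $M^4$ of $X_1$ are not Hamiltonian isotopic for the exact form (which is precisely what obstructs a Lagrangian $S^3$ by Lemma~\ref{thm:amp}), a small nonexact deformation of $\o$ will change the symplectic area balance in $M^4$ so that the two vanishing cycles \emph{become} Hamiltonian isotopic, and then Lemma~\ref{thm:amp} produces the sphere. Concretely, recall $M^4$ is built from $T^*S^1$ with three vanishing cycles, all copies of the zero-section, sitting over three points; the matching path of $X_1$ joins the outer two crosses going around the middle one. The geometric picture is that the vanishing cycle transported from the left endpoint and the one transported from the right endpoint are two embedded Lagrangian circles in a fibre which is (a completion of) a genus-type surface; they are smoothly isotopic but the isotopy necessarily sweeps across the thimble of the central critical point, and the obstruction to making it Hamiltonian is exactly the enclosed symplectic area, which is a fixed positive number $a_0>0$ determined by $\o$.

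First I would set up the deformed total space. Using the completion procedure recalled after Definition~\ref{defn:lef} and the nonexact version of Lemma~\ref{thm:build}, I construct $\tilde X_1$ as a Lefschetz fibration whose fibre carries the deformed form $\o_t$ on $M^4$; since $H^2(X_1;\RR)=\RR$, there is essentially one direction to deform, and I choose $\beta$ so that its restriction to the fibre $M^4$ changes the relevant relative area. The key computation is to identify, inside the fibre $M_{\gamma(1/2)}$ over the midpoint of the matching path, the two vanishing cycles $V_0^t, V_1^t$ and to show that for $t>0$ they are Hamiltonian isotopic through a \emph{compactly supported} Hamiltonian on that fibre. This reduces to a purely two-dimensional statement: two embedded circles in a punctured surface bounding a region of area $a(t)$ are Hamiltonian isotopic iff $a(t)=0$, and I arrange the deformation so that $a(t) = a_0 - c\,t$ for suitable normalization, or more robustly so that the relevant difference of areas vanishes for all $t>0$ after an appropriate rescaling of $\beta$ — the point being only that the exact case $t=0$ is the degenerate one and any nonzero $t$ (of the correct sign; if necessary replace $\beta$ by $-\beta$) kills the obstruction. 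Having made $V_0^t$ and $V_1^t$ Hamiltonian isotopic in the fibre, Lemma~\ref{thm:amp} immediately yields a Lagrangian sphere in $\tilde X_1^t$ for every $t>0$.

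The main obstacle I anticipate is the bookkeeping of symplectic areas under the deformation and completion: one must check that the deformed form $\o_t$ genuinely remains of the required ``convex at infinity / Lefschetz fibration'' type for all small $t$ (so that parallel transport and vanishing cycles still make sense), that the $2$-form $\beta$ can be chosen supported away from the horizontal boundary and the critical points, and — most delicately — that the Hamiltonian isotopy between $V_0^t$ and $V_1^t$ can be taken \emph{compactly supported in the fibre}, as Lemma~\ref{thm:amp} demands, rather than merely an ambient symplectic isotopy. Concretely this means exhibiting an explicit compactly supported Hamiltonian on $M^4_{\gamma(1/2)}$ (equivalently, cutting off the naive area-correcting isotopy near infinity without destroying the endpoint conditions), which is where a short but careful local computation in the $T^*S^1$-model is unavoidable. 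Once that cutoff is in place the rest is formal: apply Lemma~\ref{thm:amp} fibrewise and conclude.
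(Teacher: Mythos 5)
Your overall strategy is the right one and matches the paper's: deform the form, show the two vanishing cycles $V_0^t,V_1^t\subset M^4$ of the Lefschetz fibration $\tilde X_1^t\to\CC$ become Hamiltonian isotopic in the fibre for $t>0$, and invoke Lemma \ref{thm:amp}. But the mechanism you propose for producing that Hamiltonian isotopy is not correct, and the gap is at the heart of the argument. In the exact case $V_0$ and $V_1$ are both \emph{exact} Lagrangian spheres (unions of thimbles) in the same homology class $[A]+[B]$, so there is no ``enclosed symplectic area $a_0>0$'' obstructing a Hamiltonian isotopy between them; the obstruction at $t=0$ is genuinely Floer-theoretic --- that is precisely Maydanskiy's theorem that $X_1$ is exotic. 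Consequently your plan to ``kill'' an area obstruction is internally inconsistent: an affine function $a(t)=a_0-ct$ with $a_0\neq 0$ vanishes at exactly one value of $t$, not for all $t>0$, and the fallback of arranging ``the relevant difference of areas to vanish for all $t>0$'' contradicts its nonvanishing at $t=0$. You are also working one dimension too low: the objects to be Hamiltonian-isotoped are $2$-spheres in the $A_2$ Milnor fibre $M^4$, not circles in the $T^*S^1$-fibre of $M^4\to\CC$; the ``circles in a punctured surface'' picture is only the informal intuition for why the exact case fails.

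The actual mechanism is in a sense the opposite of what you describe. One chooses the deforming form so that the classes $A$ and $B$ (the straight-line matching cycles of $M^4$) acquire nonzero areas of opposite sign, with $\o_t([A]+[B])=0$ --- this last condition is forced if the form is to extend over the total space, and it is what guarantees that the paths $\gamma_0,\gamma_1$ \emph{still} define genuine matching cycles $V_0^t,V_1^t$ after deformation (a homological area computation you omit). The point of the deformation is then that for $t>0$ the class $A$ has \emph{nonzero} area, so when one interpolates between $\gamma_0$ and $\gamma_1$ through paths crossing the middle critical value, the transported vanishing circles cannot converge to the critical point (that would produce a Lagrangian representing a class of nonzero area); by the $S^1$-symmetry they converge to circles, and one obtains a smooth family of embedded Lagrangian spheres $V_s^t$ joining $V_0^t$ to $V_1^t$. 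The Hamiltonian isotopy then follows from the standard Weinstein-neighbourhood argument (graphs of exact $1$-forms on $T^*S^2$, Lemma \ref{thm:hamiltopic}), which also settles your worry about compact support. At $t=0$ this interpolating family degenerates exactly at the critical value --- consistent with, rather than contradicted by, the exotic nature of $X_1$. Without the interpolating family your proof has no route to the Hamiltonian isotopy, so the argument as proposed does not go through.
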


\subsection{\texorpdfstring{Constructing a deformation of $X_1$}{Constructing a
deformation of X1}}
The fibres of Maydanskiy's examples are $A_2$ Milnor fibres.  For our purposes,
which crucially rely on matching paths defining genuine matching cycles without
having to rely on Lemma \ref{thm:amp}, we shall work with the specific model as
below.

Let $M$ be the affine variety defined by
\[
M = \left\{ z_1^2 + z_2^2 = (z_3-1)(z_3-2)(z_3-3) \right\} \subset \CC^3
\]
equipped with symplectic form $\o$, which is the restriction of the standard
symplectic form on $\CC^3$.  We may restrict to some compact set $M^{in}
\subset M$ ($M^{in} \subset B_R \subset \CC$ for some sufficiently large $R$),
such that $M^{in}$ is a Liouville domain which becomes a Lefschetz fibration in
the sense of Definition \ref{defn:lef} once we project onto the
$z_3$-coordinate \cite[Section 19b]{seidelbible}.  It has three critical values, at 1, 2 and 3.

There is a homologically essential Lagrangian sphere $A$ living over the
straightline path joining the two critical points at 1 and 2, which is given by
the part of the real locus $M_{\RR}$ living over this path.  This sphere is
precisely the matching cycle associated to that line.  We can do the same with
the part of $M \cap \RR \langle x_3, y_1, y_2 \rangle$ living over the interval
$[2,3]$ to find another Lagrangian sphere $B$ and we shall take $A$ and $B$ to
define our standard basis of $H_2(M;\RR) = \RR^2$.

The manifold $M$ carries an $S^1$-action given by
\[
\left( \begin{array}{ccc}
z_1\\
z_2\\
z_3 \end{array}
\right)
\mapsto
\left( \begin{array}{ccc}
\cos \theta & -\sin \theta & 0  \\
\sin \theta & \cos \theta & 0  \\
0 & 0 & 1  \end{array}
\right)
\left( \begin{array}{ccc}
z_1\\
z_2\\
z_3 \end{array}
\right)
\]
and the symplectic form  $\o$ is invariant under this action.

Every smooth fibre is of the form ${ z_1^2 + z_2^2 = \lambda}$ for some nonzero
$\lambda = se^{i\alpha}$  and we observe that such a fibre is preserved by the
$S^1$-action, which in particular means that the parallel transport map
associated to a path $\gamma$ is $S^1$-equivariant.  This fibre is
symplectomorphic to $T^*S^1$, where the model we use for $T^*S^1$ is
\[
T^*S^1=\left\{(q,p) \in \RR^2 \times \RR^2 :  \left\|q\right\|=1 \; , \;
\langle q,p \rangle =0   \right\}.
\]
The symplectomorphism is defined as follows: let $\hat{z} = z e^{-i \alpha /
2}$ and map
\[
 z \mapsto \left( \frac{\Re(\hat{z})}{\|\Re(\hat{z})\|}, -\Im(\hat{z})
\|\Re(\hat{z})\| \right).
\]
Note that, for each fibre, the $S^1$-orbits are mapped to level sets $\|p\|=
constant$ so, given that the parallel transport maps are $S^1$-equivariant, the
vanishing cycle associated to any vanishing path will itself correspond to such
a level set.

We shall deform the symplectic structure by introducing 2-forms which are intended to resemble area forms supported near the equators of $A$ and $B$.  We therefore consider the 2-form on $\CC^3 \setminus i\RR^3$,
\[
\eta = g_{\epsilon}\left(\frac{x}{\|x\|} \right) \left(  \frac{x_1}{\|x\|^3}
dx_2 \wedge dx_3 + \frac{x_2}{\|x\|^3} dx_3 \wedge dx_1 + \frac{x_3}{\|x\|^3}
dx_1 \wedge dx_2 \right)
\]
where $g_{\epsilon}(x)= g_{\epsilon}(x_3)$ denotes a cutoff function for the
$x_3$-coordinate which has $\supp(g_{\epsilon}) \subset \{ |x_3|  <
\epsilon\}$.

As $\eta$ is defined using only coordinates on the real slice $\RR^3 \setminus
\{0\}$ and annihilates the radial direction, this is a closed form on $\CC^3
\setminus i\RR^3$.  We shall choose $\epsilon$ such that $\epsilon < \frac{1}{8R}$, and apply a translation $x \mapsto x +(0,0,3/2)$.  It is easy to show that $\eta$ is now well-defined on $M$, so that in the Lefschetz fibration $M^{in}
\to \DD_R$, $\eta$ is a closed, $S^1$-equivariant 2-form supported in the region lying over $\{|x_3 - 3/2| <1/4\}$ and the sphere $A$
has some nonzero area with respect to $\eta$.

Moreover, we can rescale $\eta$ so that $\o + \eta$ is still symplectic on
$M^{in}$, since the property of being symplectic is an open condition and
$M^{in}$ is compact.  Also, since $M$ is an $A_2$ Milnor fibre, its boundary
$\partial M$ is topologically the quotient of $S^3$ by a $\ZZ/3$ action and
therefore $H^2(\partial M ; \RR)=0$.  This means that, perhaps after rescaling
$\eta$ again, $M^{in}$ will still have contact boundary.

We repeat the above procedure to obtain another closed 2-form $\eta'$ on
$M^{in}$, defined now using the coordinates $y_1,y_2,x_3$ which is again
$S^1$-equivariant and is supported over $\{|x_3 - 5/2| < 1/4 \}$ and has the
property that
\[
\eta(A) = -\eta'(B).
\]
We denote by $\o_t$ the 2-forms $\o + t(\eta + \eta')$ for $t \in [0,1]$, all of
which make $M^{in}$ symplectic with convex boundary.

\begin{rmk}
Such a construction can be generalized: choose a finite collection of distinct
points $p_1, \ldots , p_{n+1} \in \RR$  and consider the affine variety
\[
M_{\mathbf{p}} = \left\{ z_1^2 + z_2^2 = \prod_i(z_3-p_i) \right\} \subset
\CC^3,
\]
which will be diffeomorphic to the $A_n$ Milnor fibre, with a basis of
$H_2(M_{\mathbf{p}})$ given by the spheres $A_i$ living over the straightline
path joining $p_i$ and $p_{i+1}$.  We may construct a deformation of the
symplectic structure on $M_{\mathbf{p}}$ by adding on 2-forms which are
supported on strips lying between the critical points as above.
\end{rmk}

\subsection{Obstructions to forming matching cycles are purely homological}
We now consider the path $\gamma_0$ in Figure \ref{fig:homology}, going from 1
to 3 in $\CC$.  We would like this to define a genuine matching cycle, with
respect to the parallel transport maps coming from $\o_t = \o + t(\eta +
\eta')$ for $t \in [0,1]$.  However, we may no longer get a genuine Lefschetz
fibration in the sense of Section \ref{sec:lef}, since the horizontal boundary
may no longer be horizontal with respect to our splitting.  This means that
parallel transport cannot be done near $\partial^h M$, but we shall not need
this: our vanishing cycles stay within a region away from the boundary, since
deforming the symplectic form will only change the parallel transport maps by a
small amount.

\begin{figure}[h!]
\[
\xy
(20,60)*\xycircle(8,2){-};
(50,60)*\xycircle(8,2){-};
(80,60)*\xycircle(8,2){-};
(20,20)*\xycircle(8,2){-};
(50,20)*\xycircle(8,2){-};
(80,20)*\xycircle(8,2){-};
(12,60)*{}; (28,20)*{} **\crv{};
(28,60)*{}; (12,20)*{} **\crv{};
(72,60)*{}; (88,20)*{} **\crv{};
(88,60)*{}; (72,20)*{} **\crv{};
(42,60)*{}; (42,20)*{} **\crv{(48,40)};
(58,60)*{}; (58,20)*{} **\crv{(52,40)};
(20,0)*{\times};
(20,-3)*{1};
(80,0)*{\times};
(80,-3)*{3};
(61,9)*{\gamma_0};
(62,-9)*{\gamma_1};
(50,0)*{\times};
(50,-3)*{2};
(20,0)*{}; (50,0)*{}; **\dir{.};
(50,0)*{}; (80,0)*{}; **\dir{.};
(60,2)*{B};
(40,2)*{A};
(30,9)*{V^t_0};
(28,-8)*{V^t_1};
(20,0)*{}; (80,0)*{} **\crv{(50,15)};
(20,0)*{}; (80,0)*{} **\crv{(50,-15)};
(50,46)*\xycircle(5.5,1.5){-};
(50,34)*\xycircle(5.5,1.5){-};
(50,40)*{S_t};
(20,40)*{}; (50,47.5)*{} **\crv{(20,43)};
(80,40)*{}; (48,35.5)*{} **\crv{(80,43)};
(20,40)*{}; (52,44.5)*{} **\crv{(20,37)};
(80,40)*{}; (50,32.5)*{} **\crv{(80,37)};
\endxy\]
\caption{}
\label{fig:homology}
\end{figure}

Therefore, for any given $t$, the path $\gamma_0$ gives us two circles in the
central fibre which we know correspond to level sets $\|p\|=constant$.  (In
Figure \ref{fig:homology}, the fibres shown at the top are those living over
the path $\gamma_0$.) These two circles enclose some chain $S_t$ in the fibre
over $\gamma_0(\frac{1}{2})$, and the sum of this chain and the two thimbles is
homologous to $[A] +[B]$, so therefore has symplectic area 0 with respect to
$\o_t$.  Since the vanishing thimbles are Lagrangian, this means that the chain
$S_t \subset T^*S^1$ must also have zero symplectic area, and therefore $S_t$
must in fact be empty.  In other words, we get a genuine matching cycle for all
$t$, which we denote $V^t_0$.  We can do likewise for the path $\gamma_1$ to
obtain the matching cycle $V_1^t$.

By the same argument, for any $t>0$ we can take a straightline path given by
the interval $[1,3]$, which goes over the central critical point at 2, and say
that this too will define a matching cycle: in the central nonsmooth fibre we
shall either get, by $S^1$-symmetry, the critical point or some circle.
However, if we obtained the critical point, then we would have found a
Lagrangian in a homology class of positive symplectic area.  Which smooth
component this circle lives in depends on whether we choose to give the class
$A$ positive or negative area.

Therefore, for $t>0$, we can take a smooth family of paths interpolating
between the two matching paths and get a smooth family $(V^t_s)_{s \in [0,1]}$
of Lagrangian $S^2$s
joining the two matching cycles.  This has the following standard consequence.

\begin{lem}
\label{thm:hamiltopic}
For $t>0$, $V^t_0$ and $V^t_1$ are Hamiltonian isotopic.
\end{lem}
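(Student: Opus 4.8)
The plan is to exploit the smooth family $(V^t_s)_{s \in [0,1]}$ of Lagrangian spheres produced just above, together with the fact that these spheres all live in a fixed symplectic manifold $(M^{in}, \o_t)$ (for our fixed $t > 0$) and are all diffeomorphic to $S^2$. The statement ``a smooth isotopy of Lagrangian submanifolds is generated by a Hamiltonian isotopy provided a certain cohomological flux class vanishes'' is the standard tool here; I would invoke it in the following form. Given a smooth family $L_s$ of Lagrangian submanifolds of $(N, \o)$ parametrized by $s \in [0,1]$, let $X_s$ be the vector field along $L_s$ generating the isotopy, and consider the family of $1$-forms $\iota_{X_s}\o|_{L_s} \in \Omega^1(L_s)$; these are automatically closed, and the flux is the class $\int_0^1 [\iota_{X_s}\o|_{L_s}]\, ds \in H^1(L_0; \RR)$. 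When this class vanishes, the Lagrangian isotopy is induced by an ambient (compactly supported) Hamiltonian isotopy, so $L_0$ and $L_1$ are Hamiltonian isotopic. In our case each $V^t_s \cong S^2$, so $H^1(V^t_s; \RR) = H^1(S^2;\RR) = 0$, and the flux class vanishes for the trivial reason that its ambient group is zero.

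First I would fix $t > 0$ and recall that $(V^t_s)_{s \in [0,1]}$ is a smooth family of embedded Lagrangian $S^2$s in $(M^{in}, \o_t)$ with $V^t_0$ and $V^t_1$ the two matching cycles in the statement; this is exactly what was constructed in the paragraph preceding the lemma by interpolating between the matching paths $\gamma_0$ and $\gamma_1$ and checking (via the homological area argument) that every intermediate path yields a genuine matching cycle, hence a smooth Lagrangian sphere. Next I would record the flux/Banyaga-type lemma cited above — I would cite a standard reference for ``exact Lagrangian isotopy implies Hamiltonian isotopy'' (e.g.\ \cite{seidelbible} or the classical statement of Banyaga). Then the key observation: since $V^t_s \cong S^2$ and $H^1(S^2;\RR)=0$, the relevant flux obstruction automatically vanishes, so the isotopy $(V^t_s)$ is generated by a (compactly supported, since everything happens in a fixed compact region away from $\partial M^{in}$) Hamiltonian. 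Therefore $V^t_0$ and $V^t_1$ are Hamiltonian isotopic, which is the claim. One small point to address: the family is built inside $M^{in}$, but since all the $V^t_s$ lie in a compact region bounded away from the horizontal boundary (as already noted in the text), the Hamiltonian can be taken compactly supported, so the conclusion also holds in the completion $\widehat{M}$ or in whichever ambient manifold one subsequently uses Lemma \ref{thm:amp} in.

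The only real subtlety — and the place I would be most careful — is checking that the family $(V^t_s)$ genuinely consists of \emph{embedded} Lagrangian submanifolds depending \emph{smoothly} on $s$, rather than merely a continuous path of Lagrangian cycles: the matching-cycle construction a priori produces, for each $s$, a Lagrangian sphere as a union of two thimbles glued along a common vanishing cycle, and one must know this gluing varies smoothly and remains an embedding as the matching path is deformed. This is handled by the $S^1$-equivariance already exploited in the text (the vanishing cycles are level sets $\|p\| = \text{const}$ in each fibre, which vary smoothly with the path), so there is no genuine obstacle, but it is the hypothesis of the flux lemma that actually needs justification. Everything else — the vanishing of $H^1(S^2)$, the invocation of the flux theorem — is routine.
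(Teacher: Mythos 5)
Your proposal is correct and takes essentially the same approach as the paper: the paper's proof implements the flux criterion by hand, identifying a Weinstein neighbourhood of $V^t_0$ with $T^*S^2$, writing nearby $V^t_s$ as graphs of closed $1$-forms $\alpha_s$, which are exact because $H^1(S^2;\RR)=0$, and assembling a compactly supported Hamiltonian from smoothly chosen primitives $f_s$. The cohomological vanishing you invoke is exactly what drives the paper's construction (note the printed text reads $H^2(V^t_s;\RR)=0$ where $H^1$ is clearly intended, since the point is exactness of a closed $1$-form on $S^2$).
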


\begin{proof}
We can identify some neighbourhood of $V^t_0$ with $T^*S^2$ and, for $0 \leq s
\leq s_0$ for some small $s_0$, $V^t_s$ will correspond to the graph of some
1-form $\alpha_s$.  Since $V^t_s$ is Lagrangian, $d\alpha_s=0$, and therefore
$\alpha_s=df_s$ since $H^2(V^t_s;\RR)=0$.  We can moreover choose these $f_s$
smoothly.  A direct calculation shows that $H(x,s)= \frac{d}{ds}(f_s(\rho(x)))$
is a Hamiltonian yielding an isotopy between $V^t_0$ and $V^t_{s_0}$, where
here $\rho \colon T^*S^2 \to S^2$ is the standard projection map.  We can patch
together such isotopies to get from $V^t_0$ to $V^t_1$, and then apply some
cutoff function to make our Hamiltonian to be compactly supported.
\end{proof}

\subsection{\texorpdfstring{$X_1$ contains a Lagrangian sphere after
deformation}{X1 contains a Lagrangian sphere after deformation}}
We are now in a position to prove Theorem \ref{thm:maydanskiy}.  To do this, we
shall establish a deformation version of Lemma \ref{thm:build}.  This is stated
below in the case where there is just one vanishing cycle, since the general
case follows from gluing together such examples.

Suppose we have $\tilde{M}$, a deformation of the symplectic structure $(M,\o)$, such that
$\tilde{M}^t$ has convex boundary for all $t$, and suppose that we also have
$\tilde{V} \subset \tilde{M}$, which is the image of an embedding of $S^n \times[0,1]$ such that, for
all $t$, we get a Lagrangian sphere $\tilde{V}^t \subset (\tilde{M}^t, \o_t)$.

Then, by Lemma \ref{thm:build}, we can construct a Lefschetz fibration $E^t \to \DD$ from $\tilde{M}^t$ and $\tilde{V}^t$ for each $t$.  We want the family $E^t$ to comprise a deformation of $(E^0, \o_E)$.

\begin{prop}
\label{thm:deformbuild}
In the above situation, we can construct a bundle of symplectic manifolds $\tilde{E} \to
[0,1]$, such that each fibre $\tilde{E}^t$ has convex boundary and comes with
an identification $\tilde{E}^t_z \cong \tilde{M}^t$, under which the vanishing
cycle $V_{\gamma}$ corresponds to $\tilde{V}^t$.  After applying a
trivialization of this bundle which is the identity over 0, this is a
deformation of $(E^0,\omega_E)$.
\end{prop}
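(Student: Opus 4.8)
The plan is to imitate the proof of Lemma \ref{thm:build} (i.e. \cite[Lemma 16.9]{seidelbible}), but to carry it out in families over the parameter $t \in [0,1]$, being careful that every choice can be made to depend smoothly on $t$. Recall how the construction of $E^t$ from $(\tilde M^t,\o_t)$ and $\tilde V^t$ goes: one takes $E^t$ to be (a compactification of) $\DD \times \tilde M^t$ away from the critical value, with a standard Lefschetz model $\{\sum z_i^2 = w\}$ glued in near the critical fibre, the gluing being dictated by a choice of identification of a neighbourhood of $\tilde V^t$ in $\tilde M^t$ with a neighbourhood of the zero-section in $T^*S^n$ (a Weinstein neighbourhood). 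The symplectic form on $E^t$ is then built as $\pi^*(\text{area form on }\DD) + (\text{a form restricting to }\o_t\text{ on fibres})$, with suitable interpolation near the critical fibre. Since all of this input data — the embedding $S^n \times [0,1] \to \tilde M$, the forms $\o_t$, the convexity data near $\partial \tilde M^t$ — is given to us smoothly in $t$ by hypothesis, I would first fix a Weinstein neighbourhood of $\tilde V \subset \tilde M$ depending smoothly on $t$. This is the parametric Weinstein neighbourhood theorem: given a smooth family of Lagrangian embeddings, one gets a smooth family of symplectomorphisms from a fixed neighbourhood of the zero-section of $T^*S^n$, which one produces by a parametric Moser argument (the relevant cohomology obstruction vanishes on each $\tilde V^t$ since $H^1(S^n;\RR)=0$, $n\ge 2$, and the family of primitives can be chosen smoothly).

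With that family of Weinstein neighbourhoods in hand, the rest of the construction of $E^t$ is essentially canonical once a handful of auxiliary cutoff functions and a large radius parameter are chosen, and those can simply be chosen once, independently of $t$ (using compactness of $[0,1]$ to make a single choice work uniformly). This produces a set $\tilde E = \bigcup_t E^t \times \{t\}$ together with, on each slice, the symplectic form $\o_{E^t}$ and the projection $\pi^t \colon E^t \to \DD$; by construction the family $\o_{E^t}$ varies smoothly, each $E^t$ has convex boundary (the convexity near $\partial^v$ comes from the base $\DD$, near $\partial^h$ from the convexity of $\partial\tilde M^t$), and the identification $\tilde E^t_z \cong \tilde M^t$ sending $V_\gamma \leftrightarrow \tilde V^t$ holds fibrewise by the usual Picard–Lefschetz computation (Lemma \ref{thm:build}). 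Collecting these forms into a single 2-form $\Omega$ on $\tilde E$ that annihilates $\ker(D\rho)$ is automatic: declare $\Omega|_{T E^t} = \o_{E^t}$ and $\iota_{\partial_t}\Omega = 0$; smoothness in $t$ is exactly what the parametric construction gives. Strictly speaking the underlying smooth manifolds $E^t$ need not be literally equal, so at this stage $\tilde E \to [0,1]$ is a smooth fibre bundle with symplectic fibres rather than a product; but the base is contractible, so Ehresmann gives a trivialization $\tilde E \cong E^0 \times [0,1]$, which we may additionally arrange to be the identity over $t = 0$ (postcompose with the inverse of its restriction to the fibre over $0$). Pulling $\Omega$ back through this trivialization yields a 2-form on $E^0 \times [0,1]$ with the three defining properties of a deformation, with $\Omega|_0 = \o_{E^0}$.

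The one genuinely non-formal point — and the main obstacle — is the smooth dependence on $t$ of the Weinstein neighbourhoods, i.e. the parametric Moser/Weinstein step. Everything else is a matter of checking that the discrete choices in Seidel's construction (cutoff functions, the radius $R$ of the base disc, the size of the neighbourhood of the zero-section that gets glued in, and the interpolation region) can be frozen independently of $t$, which follows from compactness of $[0,1]$ together with the openness of the symplectic and convexity conditions. I would therefore spend the bulk of the proof on the parametric Weinstein neighbourhood statement — set it up as $\tilde V$ being the image of a single embedding $S^n\times[0,1]\hookrightarrow \tilde M$ that is fibrewise Lagrangian, extend to a family of embeddings of a disc bundle using the family of fibrewise metrics/symplectic forms, and run Moser with the deformation vector field integrated smoothly in all variables including $t$ — and then simply remark that substituting this family into the (now parameter-free) recipe of Lemma \ref{thm:build} produces $\tilde E$ with all the asserted properties, the final trivialization being Ehresmann's theorem over the interval.
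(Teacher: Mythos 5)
Your proposal follows essentially the same route as the paper: the paper's proof also identifies the parametric Weinstein neighbourhood theorem as the key non-formal ingredient (isolated as Lemma \ref{thm:nbhd}), and then runs Seidel's construction in families, taking a fixed local Lefschetz model $W$, forming $\tilde W = W \times [0,1]$, and gluing $\DD \times \tilde M_-$ to $\tilde W$ using the family of Weinstein charts $\phi$. The only cosmetic difference is that the paper follows \cite[Proposition 1.11]{seidelles} rather than \cite[Lemma 16.9]{seidelbible} for the gluing recipe, and it builds the total space $\tilde E$ directly as a bundle over $[0,1]$ rather than slicewise-then-Ehresmann, but these amount to the same thing.
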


\begin{proof}
We closely follow \cite[Proposition 1.11]{seidelles}.  First we need a neighbourhood theorem, whose proof follows the same argument as that of the standard Lagrangian neighbourhod theorem \cite{mcduffsalamonintro}.

\begin{lem}
\label{thm:nbhd}
Let $(\tilde{M},\Omega)$ be a deformation of $\mo$.  Suppose we have $\tilde{V}
\subset \tilde{M}$ an embedding of $V \times [0,1]$ such that, for all $t$, we
get a Lagrangian  $\tilde{V}^t \subset (\tilde{M}^t,\o_t)$.  Then there exists
a neighbourhood $\mathcal{N} \subset T^*V \times [0,1]$ of the zero-section $V
\times [0,1]$ and a neighbourhood $\mathcal{U} \subset \tilde{M}$ of
$\tilde{V}$ and a diffeomorphism $\phi \colon \mathcal{N} \to \mathcal{U}$ such
that $\phi^*\Omega = \beta$ where $\beta$ is the 2-form on $T^*V \times [0,1]$
given by the pullback of the standard symplectic form on $T^*V$.
\end{lem}

In our case, we may assume our neighbourhood $\mathcal{N}$ in Lemma
\ref{thm:nbhd} is of the form $\mathcal{N}=T^*_{\leq \lambda} S^n \times [0,1]$
for some $\lambda >0$, where $T^*_{\leq \lambda}S^n$ denotes the disc cotangent
bundle with respect to the standard metric on $T^*S^n$.  Given this, we follow
\cite[Proposition 1.11]{seidelles} which starts by considering the local
Lefschetz model $q \colon \CC^{n+1} \to \CC$, $q(z) = \sum{z_i^2}$.  We also
consider the function $h(z) = \|z\|^4 - |q(z)|^2$.

When we restrict to $W \subset \CC^{n+1}$ cut out by the inequalities $h(x)
\leq 4 \lambda^2$ and $|q(z)| \leq 1$, we get a compact Lefschetz fibration
$\pi_W \colon W \to \DD$.  As explained in \cite{seidelles}, $W$ comes together
with an identification $\psi \colon \pi_W^{-1}(1) \to T_{\leq \lambda}^*S^n$, a
neighbourhood $Y\subset W$ of $\partial_h W$, a neighbourhood $Z$ of $\partial
(  T_{\leq \lambda}^*S^n)$ in  $T_{\leq \lambda}^*S^n$ and a diffeomorphism
$\Psi \colon  Y \to \DD \times Z$ which fibres over $\DD$ and agrees with
$\psi$ on $Y \cap \pi_W^{-1}(1)$.  Let $\tilde{W}= W \times [0,1]$ and, by
taking the product with $[0,1]$, consider the corresponding
$\tilde{Y},\tilde{Z},\tilde{\psi},\tilde{\Psi}$.

Now define $\tilde{M}_{-}$ to be $\tilde{M} \setminus (\phi(\mathcal{N}
\setminus \tilde{Z}))$ and consider
\[
\tilde{E} = \DD \times \tilde{M}_{-} \cup_{\sim} \tilde{W},
\]
where the identification made identifies $\tilde{Y}$ with $\DD \times
\phi(\tilde{Z})$ through $(id \times \phi) \circ \tilde{\Psi}$.  This now has
all the required properties.
\end{proof}

\begin{proof}[of Theorem~{\rm\ref{thm:maydanskiy}}]
Using Proposition \ref{thm:deformbuild}, we can construct a deformation
$\tilde{X}_1$ of Maydanskiy's exotic example $X_1$ and we want to say that we
have a Lagrangian sphere $L^t \subset \tilde{X}^t_1$ for all $t>0$.
$\tilde{X}^t_1$ admits a Lefschetz fibration with two critical points.  We take
a path joining the two critical points in the Lefschetz fibration on $X_1$.  If
we choose the vanishing paths $\gamma$ in Proposition \ref{thm:deformbuild}
such that they join together smoothly, then the concatenation of these paths is
smooth and yields two vanishing cycles in the central fibre, which are
precisely just $V^t_0$ and $V^t_1$ from Lemma \ref{thm:hamiltopic}, which we
know are Hamiltonian isotopic for all $t>0$.  We then just apply Lemma
\ref{thm:amp} to find a Lagrangian sphere.
\end{proof}

\begin{rmk}
As $t \to 0$, the Lagrangian spheres $L^t$ degenerate to some singular
Lagrangian cycle, which is worse than immersed.  In fact, topologically it
looks like $S^3$ with some $S^1$ in it collapsed to a point.  Presumably,
pseudoholomorphic curve theory with respect to this cycle is very badly
behaved, so that a Floer theory along the lines of \cite{joyceimmersed} cannot
be made to work here, although see \cite{joycecounting} for some analysis of holomorphic discs on certain similar special Lagrangian cones.
\end{rmk}

\section{Floer cohomology}
\label{sec:floer}
To consider $X_2$ and adapt the arguments presented in \cite{maydanskiyseidel},
we shall need to consider the Lagrangian Floer cohomology $HF(L_0,L_1)$ of two
transversely intersecting Lagrangian submanifolds in some symplectic manifold
$\mo$.  To define this, one has to pick a generic family of almost complex
structures $\bj=(J_t)$, which are usually required to be compatible with $\o$,
in the sense that $g_t(u,v) = \o(u,J_t v)$ defines a Riemannian metric.
However, we shall want to consider $J_t$ which are $\o$-tame except on a small
neighbourhood of $L_0 \cap L_1$, where here $J_t$ is still $\o$-compatible.
($\o$-tame means that $\o(u,J_t u)>0$ for all nonzero $u$.)  We shall show that,
given any such family of almost complex structures $\bj=(J_t)$, there exists
$\mathbf{\tilde{J}}=(\tilde{J}_t)$ arbitrarily close to it, with the same
properties, such that $HF(L_0,L_1)$ can be defined with respect to
$(\tilde{J}_t)$.  The key point is that we are using Cauchy-Riemann type
operators with totally real boundary conditions, so all the relevant elliptic
regularity theory can still be applied.

\begin{rmk}
The content of this section, that we can relax the condition on the almost
complex structures to define Floer cohomology is probably already known to
experts, but we are unaware of any written account of this in the literature.
\end{rmk}

\subsection{Setup}
Let $(M,\o)$ be a symplectic manifold of dimension $2n$ and let $L_0$, $L_1$ be
two Lagrangian submanifolds which intersect transversely.  For each
intersection $x$, fix some small open set $U_x$ around $x$ such that $L_0 \cap
L_1 \cap U_x = \{x\}$.  Assume moreover that the $U_x$ are disjoint.  Pick some
family $\bj=(J_t)$ of smooth almost complex structures which tame $\o$ (this in
particular implies that the $L_k$ are totally real), and which are
$\o$-compatible on each $U_x$.

We note here for future reference the following lemma due to Frauenfelder
\cite{frauenfelder}.

\begin{lem}
\label{thm:frauenfelder}
Let $(M^{2n},J)$ be an almost complex manifold and $L^n \subset M$ a totally
real submanifold.  Then there exists a Riemannian metric $g$ on $M$ such that

\begin{itemize}
\item $g(J(p)v,J(p)w) = g(v,w)$ for $p \in M$ and $v,w \in T_pM$,
\item $J(p)T_pL$ is the orthogonal complement of $T_pL$ for every $p\in L$,
\item $L$ is totally geodesic with respect to $g$.
\end{itemize}

\end{lem}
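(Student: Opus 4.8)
The plan is to construct the metric $g$ by first building it locally on a tubular neighbourhood of $L$ using the exponential-type coordinates adapted to the totally real condition, and then patching with a partition of unity, taking care that the three listed properties are preserved under convex combinations. First I would fix, for each $p \in L$, the decomposition $T_pM = T_pL \oplus J(p)T_pL$, which is valid precisely because $L$ is totally real. Starting from an arbitrary auxiliary Riemannian metric, I can average it over the (order-two, since $J^2=-\mathrm{Id}$) action generated by $J$ — concretely, replace $h$ by $h(v,w)+h(Jv,Jw)$ — to obtain a metric that is $J$-Hermitian everywhere; this immediately gives the first bullet, and Hermitian-ness forces $J(p)T_pL \perp T_pL$ along $L$ as well, since for $u\in T_pL$ and $v \in T_pL$ one has $g(Ju,v) = g(J^2u,Jv) = -g(u,Jv)$, and combined with the symmetry this is not yet orthogonality — so the honest route is to build the metric on $L$-adapted coordinates.

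Concretely, the key step is to choose a tubular neighbourhood identification $\nu L \cong U \subset M$ and pull back: on the model $T^*L$ (or the normal bundle $J(p)T_pL \cong T^*_pL$) one has the obvious split metric $g_L \oplus g_L^*$, which by construction makes $T_pL$ and its normal fibre orthogonal, makes $L$ totally geodesic (it is the zero-section, a fixed locus of the fibrewise $-1$ map, hence totally geodesic for any fibrewise-symmetric metric), and can be arranged $J$-Hermitian along $L$ after the averaging trick above applied fibrewise. Away from a smaller neighbourhood of $L$, only the first property is required, so there I just use any globally defined $J$-Hermitian metric obtained by the averaging construction. Then I interpolate: choose a bump function $\chi$ equal to $1$ near $L$ and supported in $U$, and set $g = \chi g_{\mathrm{loc}} + (1-\chi)g_{\mathrm{glob}}$. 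Since both summands are $J$-Hermitian (this property is convex and $C^0$-local, so the convex combination stays Hermitian) the first bullet survives globally; and near $L$, where $\chi \equiv 1$, the metric equals $g_{\mathrm{loc}}$, so the second and third bullets — which are conditions only along $L$ — hold there automatically.

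The main obstacle is the compatibility of the three conditions simultaneously: being $J$-Hermitian is a global pointwise condition, whereas the orthogonality of $T_pL$ with $J(p)T_pL$ and the totally-geodesic condition live only along $L$; the risk is that the averaging step needed for Hermitian-ness destroys the careful splitting arranged for the other two. The resolution is precisely the order of operations above — do the splitting first on the model, verify that the model split metric is already Hermitian along $L$ (this uses only $J(p)T_pL \perp T_pL$ in the model, which holds by fiat), then average only in the directions transverse to $L$ or only on the region away from $L$, so that nothing along $L$ is changed. One should also check that "totally geodesic" is genuinely inherited by $L$ as the zero-section: this follows because the geodesic reflection $v \mapsto -v$ in the normal fibres is an isometry of the model metric fixing $L$ pointwise, and the fixed-point set of an isometry is always totally geodesic. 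I expect the write-up to be short since, as the remark in the paper notes, this is essentially folklore; the only place demanding genuine care is the interpolation, and there the convexity of the Hermitian condition does all the work.
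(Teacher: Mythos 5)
The paper does not actually prove this lemma --- it is quoted verbatim from Frauenfelder \cite{frauenfelder} --- so there is no internal proof to compare against; I am judging your plan on its own terms. You have the right ingredients (the splitting $T_pM = T_pL \oplus J(p)T_pL$, a tubular-neighbourhood model metric making the zero-section totally geodesic, the averaging $g \mapsto \tfrac12\bigl(g(\cdot,\cdot)+g(J\cdot,J\cdot)\bigr)$, a partition of unity), and you correctly identify the central tension between global $J$-invariance and the conditions along $L$. But your resolution of that tension does not work. The first bullet demands $g(Jv,Jw)=g(v,w)$ at \emph{every} point of $M$, including points near $L$ but off it; your $g_{\mathrm{loc}}$ is only arranged to be $J$-Hermitian \emph{along} $L$, so wherever $\chi>0$ the interpolated metric fails bullet one. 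The only repair is to apply the averaging on a full neighbourhood of $L$ --- and that step is not harmless for bullet three, contrary to your claim that averaging ``changes nothing along $L$''.

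The issue is that ``totally geodesic'' is a condition on the $1$-jet of the metric along $L$, not on its restriction to $L$. In coordinates with $L=\{y=0\}$ and $g_{ia}=0$ on $L$ (indices $i,j$ tangential, $a,b$ normal), one has $\Gamma^a_{ij} = -\tfrac12 g^{ab}\,\partial_b g_{ij}$ along $L$, so $L$ is totally geodesic iff the \emph{normal} derivatives of the tangential block $g_{ij}$ vanish on $L$. Averaging replaces $g_{ij}$ by $\tfrac12\bigl(g_{ij} + g(J\partial_i,J\partial_j)\bigr)$; even when $g$ is already $J$-invariant at points of $L$ (so the $0$-jet is unchanged), $\partial_b\bigl[g(J\partial_i,J\partial_j)\bigr]$ involves the normal derivatives of $J$ and of the normal--normal block of $g$, neither of which your construction controls, so the averaged metric need not keep $L$ totally geodesic. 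The missing step is to arrange the local model so that $g(J\cdot,J\cdot)-g(\cdot,\cdot)$ vanishes \emph{to first order} along $L$: this amounts to solving a linear equation for $\partial_b g_{ac}$ along $L$, which is possible because $J\colon T_pL \to J(p)T_pL$ is an isomorphism and the resulting prescription is symmetric in $(a,c)$. Only with that extra normalization is the final global averaging harmless for all three bullets. A smaller instance of the same confusion: your appeal to ``the fixed-point set of an isometry is totally geodesic'' requires the fibrewise antipodal map to remain an isometry after whatever fibrewise modification you perform, which is again a statement about jets of the metric near $L$, not about its restriction to $L$.
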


Let $\Sigma$ denote the holomorphic strip $\RR \times [0,1] \subset \CC$.
Given a map $u \colon \Sigma \to M$, we can consider the $\dbar$ operator
defined by
\[
\dbar u(s,t) = \partial_s u(s,t) + J_t(s,t) \partial_t u(s,t).
\]
We care about holomorphic maps, which are just those such that $\dbar u=0$ and
we define the energy of any map $u$ to be $E(u) = \int {\| \partial_s u \|^2
}$.

Let $\MM_{\bj}$ denote the set of holomorphic $u$ as above which also satisfy
the boundary conditions $u(s,0) \in L_0$, $u(s,1) \in L_1$ as well as $E(u) <
\infty$.  It is proved in \cite{robbinsalamon} that any such map must have the
property that
\[
\lim_{s \to \pm \infty} u(s,t) = x^{\pm},
\]
where $x^{\pm}$ are intersection points in $L_0 \cap L_1$.  Moreover, the
convergence near the ends is exponential in a suitable sense about which we
shall say more later.  We define $\MM_{\bj}(x,y)$ to be the space of
finite-energy trajectories as above which converge to $x$ and $y$ at the ends.

We want to examine the properties of $\MM_{\bj}(x,y)$ and, in particular,
determine when it is a smooth manifold, so we follow the standard procedure of
Floer \cite{floergradientflow}, in exhibiting $\MM_{\bj}(x,y)$ as the zero set
of some Fredholm section of a Banach bundle.  Much of what follows is already
contained in Floer's original work \cite{floergradientflow}, but we shall
recall the main details for the reader's convenience.

\subsection{Banach manifolds}
Let $kp>2$. We can consider the Sobolev space $L^p_{k;loc} (\Sigma,M)$ and
define
\[
\ppk = \left\{ u \in \lpk(\Sigma,M) : u(s,0) \in L_0 , \; u(s,1) \in L_1
\right\}.
\]
Let $\Sigma_{\rho} = \{z \in \Sigma : |\Re z| < \rho \}$.  The topology on
$\ppk$ is defined using the basis of open sets given by
\[
 \mathcal{O}_{u,\rho,\epsilon} = \left\{ v \in \ppk : v = \exp_u \xi
\mbox{ on } \Sigma_{\rho} \mbox{ and } \|\xi\| _{k,p} <\epsilon \mbox{ for } p
< \rho
\right\}.
\]
Here $u \in \ppk$ and $\rho, \epsilon >0$.

For our present purposes, and in order to ensure that we do in fact get a
Banach manifold, we shall need to restrict to a subset of $\ppk$ with nice
behaviour near intersection points $x \in L_0 \pitchfork L_1$.  Consider
\[
 \ppk(\cdot,x)= \left\{u \in \ppk : \exists \rho >0, \exists \xi \in \lpk
(\Sigma, T_xM), u(s,t) = \exp_x \xi(s,t) \forall s>\rho \right\}.
\]
In other words, we restrict attention to maps which, at one end, look like the
exponentiation of some vector field.  We impose a similar condition at the other end
to define $\ppk(x,\cdot)$, and then consider $\ppk(x,y)$.

For $u \in \ppk$, $u^*TM$ is an $\lpk$-bundle, so we can talk about
sections which are locally of $\lpk$-type.  We shall introduce the shorthand
$L^p_k(u) = \lpk(u^*TM)$
and we may also consider
\[
W^p_k(u) = \left\{ \xi \in L^p_k(u) : \xi(s,0)
\in T_{u(s,0)}L_0, \xi(s,1) \in T_{u(s,1)}L_1 \right\},
\]
so here we have tangent pointing along the Lagrangian boundary.

We can also consider spaces of sections $W^q_l(u)$ and $L^q_l(u)$ of different
regularity provided that $l \leq k$ and
\begin{equation}
\label{eqn:sobolev}
l-\frac{2}{q} \leq k - \frac{2}{p}.
\end{equation}

\begin{thm}
\emph{(\cite[Theorem 3]{floergradientflow})}
Let $p \geq 1$ and $kp>2$.  Then $\ppk(x,y) $ is a smooth Banach manifold and
its tangent space at $u$ is given by
$T_u \ppk (x,y) = W^p_k (u)$.
\end{thm}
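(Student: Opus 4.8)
The plan is to follow Floer's original argument \cite{floergradientflow} essentially verbatim, since the weakened hypothesis on $\bj$ (compatibility only near $L_0 \cap L_1$, tameness elsewhere) does not enter the construction of the Banach manifold structure on $\ppk(x,y)$ at all: that structure depends only on $M$ being a manifold and on the boundary being sent to the totally real submanifolds $L_0, L_1$, which is guaranteed because an $\o$-tame almost complex structure makes every Lagrangian totally real. First I would set up the local model. Around each intersection point $x$, using the exponential map of some fixed background metric adapted to the Lagrangian pair, identify a neighbourhood of the constant map at $x$ with an open set in the Banach space of $\lpk$-vector fields along $x$ satisfying the linear boundary conditions $\xi(s,0) \in T_x L_0$, $\xi(s,1) \in T_x L_1$; the condition $kp > 2$ gives the Sobolev embedding $L^p_k \hookrightarrow C^0$, which is what makes $\exp_x \xi$ a well-defined map into $M$ and makes the nonlinear superposition operators continuous. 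Away from the ends the chart is the standard one on $\lpk(\Sigma, M)$ built from $v \mapsto \exp_u^{-1} v$, with transition maps smooth by the usual $\Omega$-lemma / quadratic-estimate arguments for Sobolev sections (this is where $kp>2$ is used again).

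Next I would check that these charts are compatible: on overlaps the transition map is given by $\xi \mapsto \exp_{u_2}^{-1}(\exp_{u_1} \xi)$, which is smooth on the relevant Banach spaces because composition with a fixed smooth fibre-preserving map on $TM$ (namely $(p, v) \mapsto \exp_{u_2(p)}^{-1}(\exp_{u_1(p)} v)$ for $p$ in the overlap strip) defines a smooth map between the $\lpk$-section spaces, again by $kp > 2$. The key point forcing us to pass to the subspaces $\ppk(x,\cdot)$ and $\ppk(\cdot,y)$ is that a general element of $\ppk$ need not converge to an intersection point at the ends, so the chart at infinity modelled on a constant map would not cover a neighbourhood; restricting to maps that are asymptotic to $x$ (resp. $y$) in the precise $\lpk$-exponential sense of the definition cures this, and the two end-conditions are independent, so $\ppk(x,y)$ inherits a Banach manifold atlas. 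The tangent space at $u$ is then read off from the chart: it is exactly the space of $\lpk$-sections of $u^*TM$ with the linear Lagrangian boundary conditions and with the appropriate exponential decay at the ends built in — which, after absorbing the decay into the definition, is the space denoted $W^p_k(u)$ in the excerpt.

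The main obstacle — or rather the one genuinely technical point — is verifying that the end-condition defining $\ppk(x,y)$ is preserved under the chart transitions and that the resulting model at infinity is really a Banach space chart: one must show that if $u(s,t) = \exp_x \xi(s,t)$ for $s > \rho$ with $\xi \in \lpk$, then nearby maps in the $\mathcal{O}_{u,\rho,\epsilon}$-topology admit the same description with a comparable $\rho$, and that the $\lpk$-norm of $\xi$ controls the chart. This is handled by the exponential convergence estimates of \cite{robbinsalamon} together with elliptic bootstrapping: since our maps are $\dbar$-holomorphic for a Cauchy--Riemann type operator with totally real boundary conditions, interior and boundary elliptic regularity apply unchanged regardless of whether $J_t$ is tame or compatible, so all the decay and regularity inputs Floer used remain available. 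With these in hand the statement is exactly \cite[Theorem 3]{floergradientflow}, and I would simply cite that once the model has been checked to go through under the weaker compatibility hypothesis.
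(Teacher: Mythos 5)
Your overall plan — cite Floer, construct charts via the exponential map, and argue that the weakened compatibility hypothesis on $\bj$ is irrelevant because tame $J$ still makes the Lagrangians totally real — matches the paper's treatment, which likewise defers to \cite[Theorem 3]{floergradientflow} after describing the chart system. However, there are two places where your sketch goes off track or omits something the paper deliberately flags.

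First, and more seriously, you invoke the Robbin--Salamon exponential decay estimates and elliptic bootstrapping, saying ``since our maps are $\dbar$-holomorphic \ldots interior and boundary elliptic regularity apply,'' in order to verify that the end-condition defining $\ppk(x,y)$ is stable under chart transitions. But the space $\ppk(x,y)$ consists of \emph{arbitrary} maps of Sobolev class $\lpk$ with the prescribed boundary and asymptotic behaviour, not of solutions of the Cauchy--Riemann equation; its members have no regularity beyond $\lpk$, so elliptic regularity says nothing about them. The convergence at the ends is \emph{imposed} by the definition (via the stipulation that $u=\exp_x\xi$ near $\pm\infty$ for some $\xi\in\lpk$), not derived from the PDE, and checking stability of this condition under chart transitions is a pure nonlinear-superposition argument in Banach spaces, exactly as Floer does it. The Robbin--Salamon decay result enters the story only afterwards, to show that finite-energy solutions of $\dbar u=0$ actually lie in $\ppk(x,y)$ and that $\MM(x,y)$ and the zero set of the section are locally homeomorphic — it plays no role in establishing the Banach manifold structure itself.

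Second, you say you would use ``some fixed background metric adapted to the Lagrangian pair,'' but the precise input matters: the paper uses Frauenfelder's lemma (Lemma \ref{thm:frauenfelder}) to choose a family of metrics $(g_t)$ making $L_0$ and $L_1$ \emph{totally geodesic}, so that $\exp_u(\xi)$ automatically respects the Lagrangian boundary conditions whenever $\xi\in W^p_k(u)$. Without this choice your charts would not land in $\ppk$. You also omit the noncompactness issue the paper explicitly raises: on a noncompact $M$ the injectivity radius $\iota$ can vanish, which would wreck the definition of $U_u$; here it is positive because the relevant manifolds are geometrically bounded at infinity. Both points are small but are exactly the ones the paper singles out as requiring comment beyond a bare citation of Floer.
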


To show this is a Banach manifold, Floer uses a system of charts based on the
exponential map.  Accordingly, pick a family of metrics $(g_t)$ such that $L_k$
is totally geodesic with respect to $g_k$, as in Lemma \ref{thm:frauenfelder}.

Define
\begin{align*}
&\exp \colon \Sigma \times TM \to M, \\
&\exp(s,t,x,v) = \exp_{g_t}(x,v).
\end{align*}

Let $\iota$ denote the minimal injectivity radius of the metrics $g_t$ and
define
\[
U_u = \left\{\xi \in W^p_k(u) : \|\xi\|_{\infty} < \iota \right\}.
\]
On a noncompact manifold $M$, we will not necessarily have $\iota >0$.
However, in our cases, this will hold since all our symplectic manifolds are
geometrically bounded at infinity.

The charts are now given by
\begin{align*}
&\exp_u \colon U_u \to V_u =\exp_u(U_u),\\
&\exp_u(\xi)(s,t) = \exp(s,t, u(s,t),\xi(s,t)).
\end{align*}

It is because of this system of charts that we restricted the convergence
conditions at the ends in defining $\ppk(x,y)$.
The proof of above theorem is technical but makes no use of the symplectic
structure.

Moreover we may also consider Banach bundles $\WW^q_l \to \ppk(x,y)$ and
$\LL^q_l \to \ppk (x,y)$, with fibres modelled on $W^q_l(u)$ and $L^q_l(u)$
respectively,  provided that the regularity condition \eqref{eqn:sobolev}
holds.

The same proof as in \cite{floergradientflow} shows that $\dbar$ is a smooth
section of $\LL^p_{k-1}$.  In fact, since $\dbar$ is a real Cauchy-Riemann
operator with totally real boundary conditions \cite[Appendix
C]{mcduffsalamonjholo} $\dbar$ is a Fredholm operator.  We denote its
linearization at $u$ by $E_u = D\dbar (u) \colon W^p_k \to L^p_{k-1}$.

We now consider the zero-set of the section $\dbar$.  It is shown in
\cite{robbinsalamon} that if $u \in \MM(x,y)$, then $u$ has the right
convergence conditions at the ends to be an element of $\PP^p_k(x,y)$ and
moreover these sets are locally homeomorphic.  Moreover, any solution to $\dbar
u=0$ will in fact be smooth, using elliptic bootstrapping techniques.  This is
proved in \cite{floergradientflow} for $\o$-compatible $\bj$, and this proof
carries over in region $U_x$, and elsewhere it follows from \cite[Proposition
3.1.9]{mcduffsalamonjholo}.  Therefore the zero set of $\dbar$ is precisely
$\cup_{x,y} \MM_{\bj}(x,y)$.

\subsection{Fredholm theory}
\label{sec:fredholm}
This zero set will not always be a manifold, but we shall show that we can
always perturb $\bj =(J_t)$ to some arbitrarily close $\mathbf{\tilde{J}}=
(\tilde{J}_t)$ such that the corresponding moduli space
$\MM_{\mathbf{\tilde{J}}}$ is in fact a manifold.  To do this, we need to have
some space which represents the possible perturbations of $\bj$.

The space of $\o$-tame $J$ is a Fr\'{e}chet manifold whose tangent space at $J$
is given by smooth sections of  $\End(TM,J,\o)$, which is defined to be the
bundle over $M$ whose fibre at $x$ is the space of linear maps $Y \colon T_xM
\to T_xM$ such that $YJ +JY = 0$.  In order that we may have a Banach manifold,
not a Fr\'{e}chet one, we use the following argument of Floer
\cite{floergradientflow}.

Pick any sequence of positive real numbers $(\epsilon_k)$ and define
\[
\|Y\|_{\epsilon} = \sum \epsilon_k \max_x|D^k Y(x)|.
\]
Denote by $C^{\infty}_{\epsilon}(M,\End(TM,J,\o))$ those $Y$ with finite $\|
\cdot\|_{\epsilon}$ norm.  This is a Banach manifold.  Floer
\cite{floergradientflow} proves that there is a sequence $(\epsilon_k)$
that tends to zero sufficiently quickly that
$C^{\infty}_{\epsilon}(M,\End(TM,J,\o))$ is dense in $L^2(M,\End(TM,J,\o))$.

Now fix some 1-parameter family $\mathbf{J^0}=(J^0_t)$ of almost complex
structures. For a 1-parameter family $\mathbf{Y}=(Y_t)$ of elements of
$C^{\infty}_{\epsilon}(M,\End(TM,J,\o))$, we consider the map $f \colon Y_t
\mapsto  J^0_t \exp (-J^0_t Y_t))$.  On some neighbourhood of the zero-section
$f$ restricts to a diffeomorphism.  Define
\[
\mathcal{Y} = \left\{ \mathbf{Y} =(Y_t) \colon \|Y_t\|_{\infty}<r \mbox{ and }
Y_t(p)=0 \mbox{ for } p \in U \right\},
\]
where $U = \cup_x U_x$ is our neighbourhood of the intersection points $x$ and
$r$ is chosen small enough such that the restriction of $f$ is a
diffeomorphism.  Denote by $\JJ_{r}(\mathbf{J^0})$ the image of $\mathcal{Y}$
under $f$.  This space represents our space of perturbations of $\mathbf{J^0}$.
In what follows, we shall usually consider $\mathbf{J^0}$ to be fixed and write
$\JJ$ instead of $\JJ_{r}(\mathbf{J^0})$.

We have a section of Banach manifolds
\begin{align*}
&\tilde{\partial} \colon \PP \times \mathcal{Y} \to \LL, \\
&\tilde{\partial}(u,\mathbf{Y}) = \bar{\partial}_{f(\mathbf{Y})} u.
\end{align*}
As before, this section is smooth.  We want to prove that its linearization is
surjective on its zero set.  Since $E_u=D\dbar(u)$ is closed, it suffices to
prove that the image is dense whenever $\dbar u =0$.  This is proved in the
Appendix of \cite{oh}, which is itself a correction of the argument appearing
in \cite{floergradientflow}.  This result makes no assumption of any
$\o$-compatibility condition.

Now the implicit function theorem \cite[Theorem A.3.3]{mcduffsalamonjholo} says
that the \emph{universal Floer moduli space}
\[
\MM(x,y,\JJ) = \left\{ (u,\bj): u \in \MM_{\bj}(x,y) \right\}
\]
is a smooth Banach manifold.  Once we have this, we may consider the projection
onto the $\JJ$ factor, which is a Fredholm map and apply the Sard-Smale
theorem.

\begin{thm}[Sard-Smale]
The set of regular values of a Fredholm map $g \colon A \to B$ between
paracompact Banach manifolds is a Baire set in $B$.
\end{thm}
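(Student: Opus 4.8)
The plan is to reduce to the classical finite-dimensional Sard theorem by means of the Fredholm normal form, and then to pass from a local statement to the global one. Throughout I assume $g$ is of class $C^r$ with $r \geq \max\{1, \operatorname{ind}(Dg) + 1\}$, the index of a Fredholm operator being locally constant; this is harmless in all our applications, where the relevant operators are $C^{\infty}$, but some such hypothesis is genuinely needed, as it already is for the finite-dimensional Sard theorem.

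First I would set up a local normal form. Fix $a \in A$ and write $T = Dg_a$. Choose a closed complement $X_1$ to the finite-dimensional kernel $K = \ker T$ and a finite-dimensional complement $C$ to the closed image $R = \im T$, so that $T|_{X_1} \colon X_1 \to R$ is an isomorphism. The implicit function theorem \cite[Theorem A.3.3]{mcduffsalamonjholo} then produces charts around $a$ in $A$ and around $g(a)$ in $B$ in which $g$ takes the form
\[
(k,v) \longmapsto \bigl(v, \Phi(k,v)\bigr), \qquad \Phi \colon \mathcal{O} \to C,
\]
with $\mathcal{O}$ an open subset of the Banach space $K \times R$ and $\Phi$ of class $C^r$. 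In these coordinates $(k,v)$ is a critical point of $g$ exactly when $k$ is a critical point of the map $\Phi(\,\cdot\,,v) \colon K \to C$ between finite-dimensional spaces; equivalently, $(v,c)$ is a critical value of $g$ iff $c$ is a critical value of $\Phi(\,\cdot\,,v)$.

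Next I would localize once more and apply Sard. After shrinking, I may assume $\Phi$ is $C^r$ on a neighbourhood of $\overline{K'} \times \overline{W}$ with $K' \subset K$ an open ball (so $\overline{K'}$ is compact) and $W \subset R$ open. I claim the set $\Sigma$ of critical values of $g$ produced by critical points lying in $K' \times W$ is nowhere dense in $W \times C$, and hence, being a nowhere dense subset of a chart, nowhere dense in $B$. It is \emph{closed in $W \times C$}: if critical values $(v_n,c_n) \to (v,c)$ are realized by critical points $k_n \in K'$, compactness of $\overline{K'}$ lets us pass to $k_n \to k \in \overline{K'}$, and since the non-surjective linear maps $K \to C$ form a closed set (rank is lower semicontinuous) and $\Phi$ is $C^1$, the limit $k$ is again a critical point of $\Phi(\,\cdot\,,v)$ with $\Phi(k,v) = c$. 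It has \emph{empty interior}: the finite-dimensional Sard theorem gives that for each fixed $v$ the slice $\{\,c : (v,c) \in \Sigma\,\}$ has Lebesgue measure zero in $C$, so $\Sigma$ contains no product $W' \times C'$ with $C'$ open, hence no nonempty open subset of $W \times C$.

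Finally I would assemble the pieces. Open boxes of the form $K' \times W$ inside normal-form charts cover $A$, and since $A$ is covered by countably many coordinate neighbourhoods (it is separable in every case we need), countably many such boxes $\{K_i' \times W_i\}_{i \in \mathbb{N}}$ suffice. The critical values of $g$ then lie in the countable union of the associated nowhere dense sets $\Sigma_i$, so form a meager subset of $B$, and the set of regular values is its complement, a Baire set. The one substantive point is the nowhere-density of $\Sigma$: this is exactly where the Fredholm hypothesis is used — finiteness of $\dim K$ and $\dim C$ makes classical Sard available, and compactness of the finite-dimensional ball $\overline{K'}$ is what upgrades "measure zero in every slice" to "nowhere dense in the total space".
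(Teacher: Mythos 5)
The paper does not prove this statement: it is quoted as a classical black box (Smale's infinite-dimensional Sard theorem) and immediately applied to the projection from the universal moduli space to $\JJ$, so there is no in-text argument to compare yours against. Your reconstruction is the standard Smale proof --- Fredholm normal form $(k,v)\mapsto(v,\Phi(k,v))$ via the implicit function theorem, finite-dimensional Sard on the slices $\Phi(\cdot,v)\colon K\to C$, compactness of $\overline{K'}$ to upgrade ``measure zero in each slice'' to ``closed with empty interior,'' and a countable cover to conclude meagerness of the critical values --- and it is essentially correct, including the correct regularity hypothesis $r\ge\max\{1,\operatorname{ind}+1\}$, which the paper's statement omits but which is satisfied in its applications.

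Two small points are worth tightening. First, in your closedness argument the limiting critical point $k$ lands in $\overline{K'}$, not in $K'$, so the set $\Sigma$ as you defined it (critical values witnessed by points of $K'\times W$) need not be closed; the standard fix is to define $\Sigma$ using $\overline{K'}\times W$ from the outset (Sard still applies on a slightly larger open ball), after which your argument goes through verbatim. Second, the passage to a countable subcover requires $A$ to be Lindel\"of (equivalently here, second countable or separable metrizable); paracompactness alone, as in the paper's statement, does not guarantee this for a general Banach manifold. You flag this correctly --- the Banach manifolds $\PP^p_k(x,y)\times\mathcal{Y}$ arising in the paper are separable --- but it is an honest additional hypothesis on the theorem as stated, not merely a convenience.
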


This shows that there is a second category set $\JJ_{reg} \subset \JJ$ of
so-called \emph{regular} almost complex structures, such that $\MM_{\bj}$ is a
smooth manifold for $\bj \in \JJ_{reg}$.  In particular, this means that there
exist regular $\bj$ arbitrarily close to $\mathbf{J^0}$.  The dimension of this
manifold is given by the Fredholm index, which in this case is $|x| - |y|$, the
difference of the Maslov indices of the intersections \cite{floerfredholm}.
Note also that $\MM_{\bj}(x,y)$ carries a free $\RR$-action by translation in
the $s$ variable and we shall denote the quotient space by
$\widehat{\MM}_{\bj}(x,y)$.

\subsection{Compactifications}
From this point onward we shall assume that $c_1(M)=0$.  This is independent of
the almost complex structure chosen.  From the previous section, we now know
that, given two intersection points $x$ and $y$, $\MM_{\bj}(x,y)$ is a smooth
manifold of the correct dimension, provided we pick $\bj \in
\mathcal{J}_{reg}$.  Given some real number $E$, we can restrict attention to
the set $\MM^E_{\bj}(x,y)$ of Floer trajectories with the energy bound $E(u)
<E$.  Gromov compactness says that this manifold admits a natural
compactification by adding broken trajectories, possibly with bubbles.  In
order to be able to define Floer cohomology, we shall need to look at the
compactifications of these moduli spaces in cases when they have dimension
$\leq 2$.

We want to prove that we can pick our almost complex structures $(J_t)$ in such
a way that we get no bubbling along solutions to
the Floer equation. There are two possible types of bubbles: discs appearing on
the Lagrangian boundary, and spheres appearing on the interior of some Floer
solution.  We shall prove that in the case where $c_1(M)=0$, we can exclude the
possibility of sphere bubbles.   Disc bubbles are more difficult and no general
approach exists to deal with these (in fact such an approach \emph{cannot}
exist in all situations as evidenced by the existence of \emph{obstructed}
Lagrangians \cite{fooo1}).  However, we shall show later that we can avoid such
bubbles in some specific cases.  To prove that we get no sphere bubbles, we
adapt the argument found in \cite{hofersalamon}.

Fix some nonzero homology class $A \in H_2(M;\ZZ)$.  For a given $J$, we can
consider the moduli space of simple $J$-holomorphic maps $v \colon S^2 \to M$
representing the homology class $A$, which we shall denote $\MM_s(A,J)$.  We
can also take a 1-parameter family $\bj=(J_t)$ and consider the space
\[
\MM_s(A,\bj) = \left\{(t,v) : v \in \MM_s(A,J_t)\right\}.
\]
We can also consider the universal moduli space
\[
\MM_s(A,\JJ) = \left\{(t,v,\bj) : (t,v) \in \MM_s(A,\bj) \right\}.
\]
This is a smooth Banach bundle and the projection to $\JJ$ is Fredholm of index
$2n + 2c_1(A)+1$, so that for $\bj \in \JJ'_{reg}$ some second category set of
almost complex strcutures, $\MM_s(A,\bj)$ is a smooth manifold of that
dimension.  The analysis underlying all this is similar to that in the previous
section and can be found, for example, in \cite{mcduffsalamonjholo}.  We also
note that $\MM_s(A,\bj)$ admits a free action by the real 6-dimensional
reparametrization group of the sphere $G = PSL(2,\CC)$ and we consider the
space $\MM_s(A,\bj) \times_G S^2$, which, for generic $\bj$, is a smooth
manifold of dimension $2n +2c_1(M) - 3$.

By taking the fibre product over $\JJ$, we can consider
\[
\mathcal{N} = \left( \MM_s(A,\JJ) \times_G S^2 \right) \times_{\JJ} \left(
\MM(x,y,\JJ) \times [0,1] \right)
\]
and the map
\[
\mathcal{N}  \to M \times [0,1] \times M \times [0,1]
\]
given by
\[
([v,z],t,u,t') \mapsto (v(z),t,u(0,t'),t').
\]
We want to know the intersection of the image of this map with the diagonal
$\Delta_{M \times [0,1]}$. Since $\MM_{\bj}(x,y)$ carries an $\RR$-action, if
there is any such intersection, there must be a bubble intersecting a Floer
solution $u$ at some $u(0,t)$, since we only care about $J_{t_0}$-bubbles
meeting some Floer solution at time $t_0$.

For any $t$, we have an evaluation map $ev_t \colon \MM_{\bj}(x,y) \to M$ given
by $ev_t(u) = u(0,t)$ and a version of Proposition 3.4.2 in
\cite{mcduffsalamonjholo} says that this map is a submersion for all $t$.  This
means that the intersection with the diagonal is transverse, and therefore the
space
\[
\mathcal{Z} = \left\{ ([v,z],t,u,t')  : (v(z),t)=(u(0,t'),t')\right\}
\]
is a submanifold of $(\MM_s(A,\JJ) \times_G S^2) \times_{\JJ} (\MM(x,y,\JJ) \times
[0,1]) $ of codimension $2n+1$.  This means that the projection $\mathcal{Z} \to
\JJ$ has Fredholm index
\begin{align*}
&(2n + 2c_1(A) - 3) + (|x| - |y|+1) - (2n+1) \\
= \; &2c_1(A) + |x|-|y| -3 .
\end{align*}

Since we have $c_1=0$, this means that for generic $\bj=(J_t)$, the 1- and
2-dimensional moduli spaces of Floer solutions (which are needed to define the
Floer differential $d$ and show that $d^2=0$) will not intersect any sphere
bubbles.  Bearing in mind that the compactification of these spaces involves
adding broken solutions, possibly with bubbles, the same argument as in
\cite{hofersalamon} shows that we still get no intersection even after
compactifying our spaces.

The case of disc bubbles is more difficult and there is no general approach
that will work, but if we have chosen appropriate $J_0,J_1$ such that we get no
disc bubbles for our Lagrangians, then picking a generic path of almost complex
structures $(J_t)$ interpolating between these two gives a family of $(J_t)$
such that we can in fact define $HF(L_0,L_1)$.  This will be discussed more in
Section \ref{sec:discs}.

\subsection{Floer cohomology}
\label{sec:HFdefn}
We first fix the coefficient field we shall use.  Although (subject to certain
topological assumptions) the relevant moduli spaces can be oriented so that
Floer cohomology can be defined over fields of arbitrary characteristic, we
don't need this for our purposes.  We therefore introduce the Novikov
ring
\[
\Lambda_{\ZZ/2} = \left\{  \sum_r a_r q^r : a_r \in \ZZ/2, r \in \RR, r \to
\infty, \# \{a_r \neq 0 : r<E\}  < \infty \text{ for all }E  \right\}
\]
of power series in the formal parameter $q$ as in the Introduction.  This is in fact a field.

In order to define Floer cohomology, we define the Floer cochain complex to be
\[
CF(L_0,L_1) = \bigoplus_{x \in L_0 \cap L_1} \Lambda_{\ZZ/2}\langle x \rangle.
\]
In the case where $|y| = |x|-1$, the Floer differential is defined by
\[
dy = \sum_{u \in \widehat{\MM}_{\bj}(x,y)} q^{E(u)}x.
\]
For this map to be well-defined over the Novikov ring, for any $E$, there must
be only finitely many terms involving powers of $q$ less than $E$.  This
follows from Gromov compactness.  When $|y| = |x|-1$, the compactification of
$\widehat{\MM}_{\bj}^E(x,y)$ can only involve adding bubbles, since breaking
cannot occur as the solutions are already of minimal index.  But we have shown
that we can pick $\bj$ such that no bubbling occurs.  Therefore the
zero-dimensional manifold $\widehat{\MM}^E_{\bj}(x,y)$ is compact, hence
consists of finitely many points.

In order to show that this is in fact a differential (i.e. that $d^2=0$), the
standard approach is to identify the boundary of the compactification of any
1-dimensional $\widehat{\MM}_{\bj}(x,z)$ with $\widehat{\MM}_{\bj}(x,y) \times
\widehat{\MM}_{\bj}(y,z)$, and use the fact that boundary points of a
1-manifold come in pairs.  This identification again relies on the fact that no bubbing occurs, which is ensured by the previous section.  Once again we stress that we have not yet dealt with disc bubbling, so that the content of this section is incomplete and Floer cohomology will not be properly defined until we do so in Section \ref{sec:discs}.

In our setting, where $c_1(M)=0$, we may also pick a grading so that
$HF^*(L_0,L_1)$ becomes a $\ZZ$-graded group \cite{seidelgradings}.

We also want to define a multiplication map on Floer cohomology.  We start by doing this on the chain level.

Consider three Lagrangian submanifolds $L_i$, $i=0,1,2$ and transverse intersection points $x \in L_0 \cap L_2$, $y \in L_0\cap L_1$, $z \in L_1\cap L_2$.  Similar to before we may consider the moduli space $\MM^2_{\bj}(x,y,z)$ of holomorphic curves $u$ from a disc with 3 marked boundary points mapping to $M$ such that the marked boundary points tend to $x,y,z$ and the remainder of the boundary maps to the various Lagrangians (see \cite[Section 2]{seidelbible} for more specific details).  Here $\bj$ is a 2-parameter family of almost complex structures $(J_w)_{w \in \DD}$ and a similar analysis to the previous section shows that, for a generic choice of $\bj$, $\MM^2_{\bj}(x,y,z)$ is a smooth manifold of dimension $|x| -|y| - |z|$.

We can therefore define
\begin{align*}
&m \colon CF(L_1,L_2) \otimes CF(L_0,L_1) \to CF(L_0,L_2), \\
&m(z,y) = \sum_{u \in \MM^2_{\bj}(x,y,z)} q^{E(u)}x.
\end{align*}
in the case where $|x|=|y| + |z|$.  We want this to be a chain map so that we get a multiplication on the cohomological level.

Here the standard approach is again to consider the boundary of the compactification of the 1-dimensional part of $\MM^2_{\bj}(x,y,z)$ (see for example \cite{rittertqft}).  However, in our case we must once more rule out the possibility of bubbling off of spheres (disc bubbles will be dealt with in Section \ref{sec:discs}).

We continue in a similar vein to before and consider the universal moduli space
\[
\MM^2(x,y,z,\JJ)= \left\{ (u,\bj): u \in \MM^2_{\bj}(x,y,z) \right\}
\]
for an appropriate Banach space $\JJ$ of 2-parameter families of almost complex structures defined similarly to the previous section.  We then consider
\[
\mathcal{N}' = \left( \MM_s(A,\JJ) \times_G S^2 \right) \times_{\JJ} \left(
\MM^2(x,y,z,\JJ) \times \DD \right).
\]
By mapping to $M\times \DD \times M \times \DD$ via $([v,z],w,u,w') \mapsto (v(z),w,u(w'),w')$, we see that $\mathcal{N}'$ contains a submanifold 
\[
\mathcal{Z}' = \left\{ ([v,z],w,u,w')  : (v(z),w)=(u(w'),w')\right\}
\]
of codimension $2n+2$, which represents the intersections between $J_{w}$-bubbles and multiplication curves $u$ at point $u(w)$.  The projection $\mathcal{Z}' \to \JJ$ is Fredholm of index
\begin{align}
\label{eqn:dim}
&(2n + 2c_1(A) - 2) + (|x| - |y| -|z| +2) - (2n+2) \\   \nonumber
= \; &2c_1(A) + |x|-|y|-|z| -2 .
\end{align}
Therefore, for generic $\bj=(J_w)$, the 0- and 1-dimensional moduli spaces of such holomorphic discs do not intersect any sphere bubbles (recall that we are assuming $c_1(M)=0$), so these will not obstruct our multiplication surviving to cohomology.

We shall also want, when defining \emph{wrapped} Floer cohomology, to have a map
\[
\Psi_H \colon CF(L_0,L_1) \to CF(L_0, \psi_H(L_1)),
\]
where $\psi_H$ is the Hamiltonian isotopy coming from some Hamiltonian $H \colon M \times [0,1] \to \RR$ (when $M$ is noncompact but convex at infinity, we additionally require $H$ to be \emph{monotone}: $\partial_sH_s \leq 0$ \cite{rittertqft}).  First note that intersection points $y \in L_0 \cap \psi(L_1)$ are in one-to-one correspondence with \emph{Hamiltonian chords} $y \colon [0,1] \to M$ such that $y(0) \in L_0, y(1) \in L_1$, and $\dot{y}(s) = X_H(y(s))$.

For $x \in L_0 \cap L_1$ and $y \in L_0 \cap \psi(L_1)$, we consider the moduli space of \emph{continuation Floer trajectories} $\MM_{\bj}^H(x,y)$, solutions $u$ to the equation
\[
\partial_s v+ J_{s,t}(\partial_t v-X_H) = 0
\]
on the strip $\RR \times [0,1]$ such that $u(\cdot,0) \in L_0$, $u(\cdot,1) \in L_1$, and which converge to the point $x$ at $+\infty$ and to the chord $y(t)$ at $-\infty$. The standard approach \cite{abouzaidseidelviterbo} shows that, for generic $\bj = (J_{s,t})$, this moduli space is a smooth manifold of dimension $|y|-|x|$ and we can define
\[
\Psi_H x = \sum_{u \in \MM^H_{\bj}(x,y)} q^{E(u)}y
\]
in the case when $|y|=|x|$.  Again the standard argument involving the 1-dimensional part of $\MM^H_{\bj}(x,y)$ shows that this is a chain map modulo bubbling.  But no bubbling of spheres occurs because of the same dimension count as in \eqref{eqn:dim} replacing $\vdim \MM_{\bj}^2(x,y,z)$ with $\vdim \MM_{\bj}^H(x,y)$: the space $\mathcal{Z}''$ representing intersections between $J_{s,t}$-bubbles and continuation trajectories at $u(s,t)$ has virtual dimension
\begin{align*}
&(2n + 2c_1(A) - 2) + (|y| - |x|+2) - (2n+2) \\ 
=\; &2c_1(A) + |y|-|x| -2 .
\end{align*}

Note that we are here using 2-parameter families of almost complex structures on $\RR \times [0,1]$ as opposed to the 1-parameter families used in defining $d$.  See Section \ref{sec:discs} for the argument for disc bubbles.

A similar argument shows that $\Psi_H$ intertwines the multiplicative structures on $HF(L_0,L_1)$ and $HF(L_0, \psi_H(L_1))$.

\begin{rmk}
In the case of exact Lagrangians in an exact symplectic manifold, much of the
above analysis is unnecessary: exactness means that no bubbles occur in the
compactifications of our moduli spaces, and we also get a priori energy bounds
independent of $u$, so we can actually work over $\ZZ/2$ should we wish.
\end{rmk}

\subsection{Floer cohomology in Lefschetz fibrations}
\label{sec:acs}
In the context of a Lefschetz fibration $\pi \colon E \to \CC$, we can make a
choice of almost complex structures which lends itself well to Floer
cohomology calculations.

In some neighbourhood of $E^{crit}$ we pick $J$ to agree with the standard
integrable complex structure in the local model $z \mapsto \sum z^2_i$ as in
Definition \ref{defn:lef}, which makes $\o$ locally a K\"{a}hler form.  Away
from $E^{crit}$, we have the splitting
\[
T_xE = T^h_xE \oplus T^v_x E
\]
where $T^v_xE = \ker(D\pi_x)$ and $T^h_x E \cong T_{\pi(x)}\CC$.  With respect
to this splitting, we choose $J$ that, away from $E^{crit}$, look like
\[
\left( \begin{array}{cc}
j &  0 \\
0 & J^v \end{array}
\right),
\]
such that $J^v$, the vertical part of $J$, is compatible with $\o$ restricted
to the fibre and $j$ is compatible with the standard form on the base.  Such a
$J$ makes the projection $\pi$ $J$-holomorphic, so that Floer solutions in $E$
project to $j$-holomorphic strips $\pi \circ u \colon \Sigma \to \CC$, and we
can now use the maximum principle for holomorphic functions to restrict the
region in which Floer solutions may appear.

The problem is that such a $J$ will not necessarily be regular, so not be
suitable for defining $HF(L_0,L_1)$. In \cite{maydanskiyseidel}, they proceed
as follows.  They take some small generic perturbation of $(J_t)$ to regular
$(\tilde{J}_t)$ such that $(\tilde{J}_t)$ is still $\o$-compatible, losing in
the process the property that $\pi$ is holomorphic.  However, Gromov
compactness says that Floer solutions for $(J_t)$ will be close to Floer
solutions for $(\tilde{J}_t)$.  In order to apply Gromov's compactness theorem
for this argument to work, we need some energy bounds, which a priori exist in
the setting of \cite{maydanskiyseidel} as all their manifolds are exact.

We do not have any such energy bounds.  Therefore, we perturb $J$ by adding
some \emph{horizontal} component to get
\[
\tilde{J}=
\left( \begin{array}{cc}
j &  0 \\
H & J^v \end{array}
\right),
\]
where $H$ is some small perturbation that is zero on some neighbourhood of the
intersctions of our Lagrangians and such that $\tilde{J}^2=-1$.  Now $\pi$ is
still holomorphic, so we can use maximum principles in the base, but
$\tilde{J}$ is no longer compatible with $\o$.  However, for small $H$, it will
still tame $\o$ and we can use the discussion above to say that we can still do
Floer cohomology in this setting.  The proof that the space of such $H$ is large enough for us to achieve transversality as in Section \ref{sec:fredholm} can be found in \cite[Lemma 2.4]{seidelles}.

\subsection{Disc bubbles}
\label{sec:discs}
We have not yet said anything about how to avoid disc bubbles, $J$-holomorphic
maps $w \colon (\DD, \partial \DD) \to (M,L)$.  However, for the purposes of
this paper, we need only consider specific sorts of Lagrangian submanifolds,
namely spheres or Lefschetz thimbles in some Lefschetz fibration, with a
six-dimensional total space and whose first Chern class vanishes.

In the first instance, it is shown in \cite[Corollary 4.5]{welschinger}, using techniques
inspired by symplectic field theory, that for a Lagrangian sphere $L$ in a
symplectic manifold of dimension at least 4 with vanishing first Chern class,
there exists a $J_L$ such that the Floer cohomology of $L$ is unobstructed ($(L,J_L)$ is an \emph{elementary Lagrangian conductor} in the language of Welschinger) and moreover we have the classical isomorphism $HF^*(L,L) \cong H^*(S^n, \Lambda_{\ZZ/2})$ \cite[Corollary 4.12]{welschinger}.  This is proven in \cite{welschinger} only for compatible $J$, not the larger class of almost complex structures we have considered in this section.  However, in the next section, there is only one point at which we need to consider the Floer cohomology of a 3-sphere in the total space of a Lefschetz fibration (Section \ref{sec:wrapped}) and here we don't need to perform the horizontal perturbation trick, so at this point in the argument we can just pick a compatible $J$ for the sphere as usual.

As for thimbles, we start by picking $J$ adapted to our Lefschetz fibration as
above.  If a disc bubble exists, then by considering the projection to the
base, we see that any such bubble must entirely be contained in some fibre of
$\pi \colon E \to \CC$.  The part of the thimble living in this fibre is now
just a sphere, so we can arrange for the vertical part $J^v$ of $J$ to be such
that we get no bubbles as in the previous paragraph.  However, this fails to
take into account of the fact that we have a 1-parameter family of such fibres
(the vanishing path).  In fact, in \cite{welschinger} the relevant Fredholm
problem involves a Fredholm operator whose index is bounded from above by $-2$,
so we may in fact generically pick a 1-parameter family of such $J$ so that the Floer cohomology is unobstructed.

Now to complete the definition of the Floer cohomology of two such Lagrangians, we pick appropriate
$J_0$ and $J_1$ as above and then pick some path $\bj =(J_t)$ interpolating
between them.  A generic perturbation of $\bj$, which may be chosen such that
the endpoints are fixed will then be suitable.  We may do likewise to exclude the possibility of disc bubbles appearing in the compactifications of $\MM_{\bj}^2(x,y,z)$ and $\MM_{\bj}^H(x,y)$ (although we now consider 2-parameter families of almost complex structures, we are free to choose that $\bj$ be constant along the boundary components of the disc/strip since we can achieve transversality by perturbing $\bj$ just on the interior), thus completing the constructions of Section \ref{sec:HFdefn}.

\begin{rmk}
Welschinger \cite{welschinger} establishes a result saying that, given a Lagrangian sphere $L$ and any $E>0$, there exists a second category set of almost complex structures $J_E$ such that $E(w)>E$ for any $J_E$-holomorphic disc $w \colon (\DD, \partial \DD) \to (M,L)$.  These bubbles can then be discounted by using the Novikov ring.  A similar sort of argument is perhaps best explained in \cite[Chapter 4.6]{fooo1}.

Briefly, suppose that we pick almost complex structures $J_i$ such that any $J_i$-holomorphic disc $w$ has energy $E(w) >i$. We can construct, for each $i$, $\ainf$-structures $\{\mu^i_d\}$ on the space of cochains $C^*(L)$ where, by assumption, $\{\mu^i_d\}=1 + O(q^i)$.  (Here $q$ is our formal Novikov parameter.)  In \cite[Chapter 4.6]{fooo1}, they construct $\ainf$-functors $\mathcal{F}^i \colon (C^*(L), \{\mu^i_d\}) \to (C^*(L), \{\mu^{i+1}_d\})$ which come from counts of genus 0 stable curves, all of whose components are $J_{\alpha}$-holomorphic for some $i \leq \alpha \leq i+1$.  Again, $\mathcal{F}^i=\{\mathcal{F}^i_r\} = \Id + O(q^i)$ in our Novikov filtration.  This means that $\prod_{i=1}^{\infty} \mathcal{F}^i$ converges over $\Lambda_{\ZZ/2}$, and so defines an $\ainf$-functor from $(C^*(L), \{\mu^1_d\})$ to the classical $\ainf$-structure on $C^*(L)$.  We may then pull back the classical Maurer-Cartan solution for which $HF^*(L,L) \cong H^*(S^n, \Lambda_{\ZZ/2})$ by $\prod_{i=1}^{\infty} \mathcal{F}^i$.
\end{rmk}

\section{The examples of Maydanskiy-Seidel}
\label{sec:maydanskiyseidel}
Using the same method as explained in Section \ref{sec:lef}, we can construct
the six-dimensional symplectic manifold $X_2$ in Figure
\ref{fig:maydanskiyseidel}.  Its generic fibre is diffeomorphic to the
$A_{m+1}$ Milnor fibre $M_{m+1}$ and the Lefschetz fibration $\pi \colon X_2
\to \CC$ has $m+1$ critical points corresponding to $m+1$ vanishing cycles in
$M_{m+1}$.  The first $m$, $V_1, \ldots, V_m$ come from the straightline
matching paths, but $V_{m+1}$ is the sphere associated to the curved path
$\gamma_{m+1}$.  For each critical value $x_i$, corresponding to $V_i$, fix
some vanishing path $\beta_i \colon [0,\infty) \to \CC$ such that $\beta_i(t) =
t$ for $t \gg 0$.  Let $\Delta_i \subset X_2$ denote the corresponding
Lefschetz thimble.

\begin{figure}[h!]
\[
\xy
(-20,0)*{\times}; (-10,0)*{\times}; **\crv{};
(-10,0); (0,0)*{\times}; **\crv{};
(10,0)*{\times};
(0,0); (1.5,0); **\crv{};
(5,0)*{\cdots};
(8.5,0); (10,0); **\crv{};
(30,0)*{\times};
(18,10); (18,-10); **\dir{.};
(19,10); (19,-10); **\dir{.};
(20,10); (20,-10); **\dir{.};
(21,10); (21,-10); **\dir{.};
(22,10); (22,-10); **\dir{.};
(18.5,10); (18.5,-10); **\dir{.};
(19.5,10); (19.5,-10); **\dir{.};
(20.5,10); (20.5,-10); **\dir{.};
(21.5,10); (21.5,-10); **\dir{.};
(20,-13)*{R};
(-20,0); (10,0); **\crv{(-10,-18)&(1,20)};
(-5,7)*{\gamma_{m+1}};
(-30,0)*{X_2=};
\endxy
\]
\caption{}
\label{fig:maydanskiyseidel}
\end{figure}

A trivial extension of the argument in \cite{maydanskiyseidel}, which will be recapped in this section, shows that
$X_2$ is diffeomorphic to $T^*S^3 \cup 2$-handle and also contains no Lagrangian sphere $L$ such that $[L]\neq0$ in
$H_2(X_2;\ZZ/2)$.  (We have shown below only one such possible choice of
$\gamma_{m+1}$; there are infinitely many others for which this is also true
\cite{maydanskiyseidel}.)  We construct a deformation $\tilde{X}_2$ of this
manifold by adding on a closed 2-form supported in the shaded region $R$, as in
Section \ref{sec:maydanskiy}, to obtain a family of symplectic manifolds
$(\tilde{X}^t_2, \o_t)$. $c_1(X_2)=0$ so therefore $c_1(\tilde{X}^t_2)=0$ for
all $t$.  We also note that, after deformation, the $V_i$ will still be
Lagrangian in $M_{m+1}$ since they live away from the region $R$.  Also the
thimbles $\Delta_i$ will stay Lagrangian in $\tilde{X}^t_2$.

In this section, we shall prove the following:

\begin{thm}
\label{thm:maydanskiyseidel}
For all $t \in [0,1]$, $\tilde{X}^t_2$ contains no Lagrangian sphere $L$ such
that $[L] \neq 0 \in H_2(\tilde{X}^t_2; \ZZ/2)$.
\end{thm}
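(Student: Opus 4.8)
The strategy is to rerun the Maydanskiy--Seidel argument for $X_2$, but tracking energy and the Novikov filtration carefully since we are no longer in the exact setting. Suppose for contradiction that some $\tilde X_2^t$ contains a Lagrangian sphere $L$ with $[L]\neq 0$ in $H_2(\tilde X_2^t;\ZZ/2)$. The heart of the original argument is that such an $L$ must have nontrivial wrapped Floer cohomology with one of the Lefschetz thimbles $\Delta_i$; combined with the fact that the thimbles generate the relevant Fukaya-type category (or at least that their wrapped Floer cohomologies vanish), one reaches a contradiction. So I would first recall, following \cite{maydanskiyseidel}, the long exact sequences / spectral sequence relating $HF(L,L)$ (or $HW$) to the groups $HF(\Delta_i, L)$, using the fact that the thimbles associated to a distinguished basis of vanishing paths ``fill up'' $X_2$. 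The key structural input is that for the specific choice of $\gamma_{m+1}$, the total space $X_2$ has the property that $\bigoplus_i HW(\Delta_i, L)$ forces $HF(L,L)=0$, contradicting $HF^*(L,L)\cong H^*(S^3;\Lambda_{\ZZ/2})$ from \cite[Corollary 4.12]{welschinger} — which in particular is nonzero.

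The technical work is in making each piece of this go through after deformation. First, $\tilde X_2^t$ is still convex at infinity and has $c_1=0$, the $V_i$ are still Lagrangian spheres in $M_{m+1}$ and the thimbles $\Delta_i$ are still Lagrangian, since everything of interest lies away from the support region $R$ of the deforming $2$-form; this was already noted in the setup. Second, the Floer and wrapped Floer cohomology groups $HF(\Delta_i,L)$, $HW(\Delta_i,L)$, $HF(L,L)$ are all well-defined in this nonexact setting: this is exactly what Section \ref{sec:floer} was built for. We use $\o$-tame almost complex structures with a horizontal perturbation so that $\pi$ stays holomorphic (Section \ref{sec:acs}), which gives the maximum-principle control on where Floer solutions live; sphere bubbles are excluded by $c_1=0$ and the dimension counts of Section \ref{sec:fredholm}; disc bubbles on $L$ and on the $\Delta_i$ are handled by Section \ref{sec:discs} (Welschinger's elementary Lagrangian conductors for the sphere $L$ and the fibrewise-sphere argument for the thimbles). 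All gradings exist because $c_1=0$. Third — and this is the place where nonexactness really bites — the various continuation maps, triangle products and exact sequences now only hold over the Novikov field $\Lambda_{\ZZ/2}$, with maps of the form $\Id + O(q)$; one must check that the argument of \cite{maydanskiyseidel} is robust to this, i.e. that the relevant maps are still isomorphisms after tensoring up to $\Lambda_{\ZZ/2}$, using that $\Lambda_{\ZZ/2}$ is a field (noted in Section \ref{sec:HFdefn}) and that the filtration arguments in the final remark of Section \ref{sec:discs} converge.

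The main obstacle I expect is the bookkeeping in the long exact sequence / iterated cone argument: in \cite{maydanskiyseidel} one builds $L$ (up to quasi-isomorphism in the relevant $A_\infty$ category) out of the thimbles and the fibre Lagrangians via mapping cones, and concludes a contradiction from a Floer-theoretic computation in the $A_{m+1}$ Milnor fibre $M_{m+1}$ together with the behaviour of the curved vanishing cycle $V_{m+1}$. Each cone and each quasi-isomorphism must be re-examined to confirm it is still valid over $\Lambda_{\ZZ/2}$ with $q$-filtered error terms, and one must confirm that energy is never ``lost to infinity'' — i.e. that the maximum principle in the base plus convexity at infinity still confines all relevant holomorphic curves to a compact region, so that Gromov compactness applies and the $q$-power counts are finite. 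Once the confinement and the field-coefficient versions of the exact sequences are in hand, the contradiction with $HF^*(L,L)\neq 0$ closes the argument for every $t\in[0,1]$ simultaneously, since nothing in the argument depended on $t$ beyond the uniform geometric bounds.
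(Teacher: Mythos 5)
Your outline follows the paper's overall strategy --- assume a homologically essential sphere $L$ exists, and derive a contradiction by rerunning the Maydanskiy--Seidel argument using the nonexact Floer machinery of Section \ref{sec:floer} --- but it is vague or wrong precisely at the two places where the content lies. First, the contradiction is not with $HF^*(L,L)\cong H^*(S^3;\Lambda_{\ZZ/2})$ via a generation statement for the thimbles; no such generation result is available or needed here. The actual mechanism is: the topology of $T^*S^3$ with a $2$-handle attached forces $L\cdot\Delta_{m+1}\neq 0$, hence $HF_t^*(L,\Delta_{m+1})=HW_t^*(L,\Delta_{m+1})\neq 0$; this group is a unital module over $HW_t^*(\Delta_{m+1},\Delta_{m+1})$, so the entire proof reduces to showing $HW_t^*(\Delta_{m+1},\Delta_{m+1})=0$. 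That vanishing is then established by a specific chain of steps: the exact triangle relating $HW_t^*(\Delta_{m+1},\Delta_{m+1})$ to $HF_t^*(\mu(V_{m+1}),V_{m+1})$ computed in the fibre; the vanishing of the class $\sigma$ because the unit survives; Gabriel's classification of twisted complexes over the directed $A_m$-category $\AA$; and the geometric input (Lemma \ref{thm:contradiction}) that the product $HF_t^*(V_{m+1},V_i)\otimes HF_t^*(V_i,V_{m+1})\to HF_t^*(V_{m+1},V_{m+1})$ never hits the identity, proved with the auxiliary Lagrangian $L_\xi$ and the Khovanov--Seidel computation. None of this appears in your plan beyond ``rerun the argument,'' and your proposed endgame would require an unproven generation criterion.

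Second, your proposed device for handling nonexactness --- Novikov coefficients with continuation and cone maps of the form $\Id+O(q)$ --- is not what makes the argument go through and would not suffice as stated. Seidel's long exact sequence, needed both for the total-space-to-fibre reduction and for Lemma \ref{thm:contradiction}, relies on upgrading the action filtration to a $\ZZ$-filtration, and this fails in general once the form is nonexact. The key observation is instead geometric and local: the deforming $2$-form is supported in the region $R$, and every Lagrangian entering the fibre-level argument ($V_1,\dots,V_{m+1}$, their Dehn twists, and $L_\xi$) is disjoint from $R$; maximum principles applied to the fibre's own Lefschetz fibration then confine all relevant holomorphic curves to the region where $\o_t$ is exact, so the fibre-level computations --- including the discrete action spectrum needed for Seidel's exact triangle --- are literally unchanged from the exact case. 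Without this localization, the $q$-filtered bookkeeping you propose gives no control over the exact sequence arguments, and the proof does not close.
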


The proof of this will essentially just be a repeat of the argument in
\cite{maydanskiyseidel}, so we shall not explain all the details fully, instead
directing the interested reader to the relevant sections of
\cite{maydanskiyseidel}.  However, this proof relies heavily on the technology
of Floer cohomology and Fukaya categories.  In the original paper,
everything is carried out working within the category of exact symplectic
manifolds so the analytical issues involved in setting up Floer cohomology are
easily overcome.  This was why we had to go through the analysis of the
previous section as we now often have to work in the more problematic nonexact
setting.  With the results of the previous section however, the argument of
\cite{maydanskiyseidel} more or less just carries over, and we only make a few
remarks where particular care needs to be exercised.

In what follows, we shall denote by $HF_t^*(L_0,L_1)$ the Floer cohomology
computed with respect to $\o_t$ in any situations where there is likely to be
confusion about the symplectic form being used.

\subsection{Wrapped Floer cohomology}
\label{sec:wrapped}
We start by defining a variant of Floer cohomology, wrapped Floer cohomology.
Following \cite{maydanskiyseidel}, we shall not need to define this in the
level of generality found in \cite{abouzaidseidelviterbo,rittertqft}, but
instead restrict to a simpler (and, in our setting, equivalent) definition
which is well-suited to Lefschetz fibrations.

Given a Lefschetz fibration $\pi \colon E \to \CC$, we consider a Hamiltonian
$H \colon E \to \RR$ of the form $H(y) = \psi(\frac{1}{2}|\pi(y)|^2)$ where
$\psi \colon \RR \to \RR$ is such that $\psi(r) = 0$ for $r < 1/2$ and
$\psi'(r)=1$ for $r \gg 0$.  Let $\Phi^{\alpha}$ denote the time-$\alpha$ flow
of this Hamiltonian and, given some Lagrangian $L$, we define $L^{\alpha} =
\Phi^{\alpha}(L)$.

We can now define the wrapped Floer cohomology of a Lagrangian $L$ and a
thimble $\Delta$ (where, in order to exclude bubbling of discs as mentioned
previously, $L$ is either a sphere or another thimble) to be the direct limit
of Floer cohomology groups
\[
HW_t^*(L,\Delta) = \varinjlim_k HF_t^*(L,\Delta^{2\pi k + \epsilon})
\]
for some very small $\epsilon>0$.  The maps involved in this direct limit are the continuation maps from Section \ref{sec:HFdefn}.

We will actually need to perform an extra small Hamiltonian isotopy in addition
to $\Phi^{\alpha}$ in order to ensure transversality of intersections but will
suppress further mention of this.  For our purposes, it is not necessary to
identify our Floer groups canonically so the details of how we do this are
irrelevant for what follows.
%

To prove Theorem \ref{thm:maydanskiyseidel}, suppose for sake of
contradiction that there does exist a Lagrangian sphere $L \subset
\tilde{X}^t_2$ such that $[L] \neq 0$ in $H_*(\tilde{X}^t_2;\ZZ/2)$.
$\tilde{X}^t_2$ is topologically $T^*S^3$ with a 2-handle attached, and it is
shown in \cite[Section 9]{maydanskiyseidel} that $L \cdot \Delta_{m+1} \neq 0$
for such a sphere .  This intersection number is the Euler characteristic of
the Floer cohomology group $HF_t^*(L,\Delta_{m+1})$.  Given the compactness of
$L$, this group is equal to the wrapped Floer cohomology group $HW_t^*(L,\Delta_{m+1})$ (we may choose to start ``wrapping'' outside some compact set containing $L$) and $HW_t^*(L,\Delta_{m+1})$ is itself a module over the unital ring $HW_t^*(\Delta_{m+1},\Delta_{m+1})$, where the multiplication maps here are the images under the direct limit of the multiplication defined in Section \ref{sec:HFdefn}. Thus we
conclude

\begin{lem}
\label{thm:hwvanishes}
If such a Lagrangian sphere exists, then $HW_t^*(\Delta_{m+1},\Delta_{m+1})
\neq 0$.
\end{lem}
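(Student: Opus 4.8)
The plan is to derive a contradiction with the ring structure rather than the module structure directly. Suppose a Lagrangian sphere $L$ with $[L] \neq 0$ exists. By the cited result of \cite[Section 9]{maydanskiyseidel}, $L \cdot \Delta_{m+1} \neq 0$, and since the (ungraded) Euler characteristic of $HF_t^*(L, \Delta_{m+1})$ computes this intersection number, we conclude $HF_t^*(L, \Delta_{m+1}) \neq 0$. Because $L$ is compact, the continuation maps defining the direct limit $HW_t^*(L, \Delta_{m+1}) = \varinjlim_k HF_t^*(L, \Delta_{m+1}^{2\pi k + \epsilon})$ are eventually isomorphisms (once we wrap outside a compact set containing $L$, no new intersection points or trajectories are created), so $HW_t^*(L, \Delta_{m+1}) \cong HF_t^*(L, \Delta_{m+1}) \neq 0$.

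Next I would invoke the module structure: $HW_t^*(L, \Delta_{m+1})$ is a module over the ring $HW_t^*(\Delta_{m+1}, \Delta_{m+1})$, with module and ring structures induced from the chain-level multiplications $m$ constructed in Section \ref{sec:HFdefn} (passing to the direct limit, which is valid since the $\Psi_H$ intertwine the multiplicative structures, as noted there). The unit of $HW_t^*(\Delta_{m+1}, \Delta_{m+1})$ acts as the identity on the module $HW_t^*(L, \Delta_{m+1})$. If the ring $HW_t^*(\Delta_{m+1}, \Delta_{m+1})$ were zero, then in particular its unit would equal zero, forcing the identity on $HW_t^*(L, \Delta_{m+1})$ to be zero, hence $HW_t^*(L, \Delta_{m+1}) = 0$ — contradicting the previous paragraph. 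Therefore $HW_t^*(\Delta_{m+1}, \Delta_{m+1}) \neq 0$.

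The main obstacle is making sure the Floer-theoretic apparatus is legitimately available in the present nonexact, deformed setting $(\tilde{X}_2^t, \o_t)$: one needs $HF_t^*(L, \Delta_{m+1})$, $HW_t^*$, the continuation maps, and the multiplication maps all to be well-defined, and one needs the unit of $HW_t^*(\Delta_{m+1}, \Delta_{m+1})$ to genuinely act as the identity on the module. All the analytic groundwork for this — taming rather than compatibility near intersection points, horizontal perturbations of $J$ to keep $\pi$ holomorphic while retaining energy control, exclusion of sphere bubbles via $c_1(\tilde{X}_2^t) = 0$, and exclusion of disc bubbles for spheres and thimbles via the results of \cite{welschinger} — has been assembled in Section \ref{sec:floer}, so the argument here is essentially formal. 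The one genuinely delicate point deferred to later in the section is the identification $L \cdot \Delta_{m+1} \neq 0$, which rests on the topological analysis of \cite[Section 9]{maydanskiyseidel} adapted to the deformed manifold; I would treat that as established and record only that the remaining homological algebra is as above.
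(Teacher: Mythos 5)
Your argument is exactly the one the paper gives: the nonvanishing intersection number $L \cdot \Delta_{m+1}$ forces $HF_t^*(L,\Delta_{m+1}) \neq 0$, compactness of $L$ identifies this with $HW_t^*(L,\Delta_{m+1})$, and the module structure over the unital ring $HW_t^*(\Delta_{m+1},\Delta_{m+1})$ then forces that ring to be nonzero. Your additional remarks on the analytic groundwork from Section \ref{sec:floer} match the paper's implicit reliance on that section, so there is nothing to correct.
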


The rest of this section is devoted to proving that
$HW_t^*(\Delta_{m+1},\Delta_{m+1}) = 0$ to provide the required contradiction.

\subsection{From total space to fibre}
If we consider the directed system of groups used to define
$HW_t^*(\Delta_{m+1}, \Delta_{m+1})$, we see that each step introduces new
intersection points as the path over which our wrapped Lefschetz thimble lives
wraps round the base once more.  Choose our family of almost complex structures
$(J_t)$ as in Section \ref{sec:acs}.  In \cite{maydanskiyseidel}, they
establish the existence of a spectral sequence computing the wrapped Floer
cohomology of any two thimbles, which carries over in our setting in light of
the discussion of Section \ref{sec:floer}.  When we consider
$HW_t^*(\Delta_{m+1}, \Delta_{m+1})$, this spectral sequence yields the
following long exact sequence
\[
\xymatrix{
{HF^*_t(\Delta_{m+1},\Delta^{\epsilon}_{m+1})} \ar[r] &
{HF^*_t(\Delta_{m+1},\Delta_{m+1}^{2\pi+ \epsilon})} \ar[d]  \\
 & {HF^*_t(\mu(V_{m+1}),V_{m+1}),} \ar[ul]^{\sigma}
}
\]
where the bottom group is calculated in the fibre $E_z$ and $\mu$ denotes the
outer monodromy of the Lefschetz fibration. Lemma \ref{thm:build} allows us to
identify some particular fibre $E_{z'}$ with the manifold $M$ included in the
data of this lemma.  We may arrange that $z=z'$.

In particular, since the unit in $HW_t^*(\Delta_{m+1},\Delta_{m+1})$ arises as
the image of $1 \in HF_t^*(\Delta_{m+1},\Delta^{\epsilon}_{m+1}) =
\Lambda_{\ZZ/2}$, the map $\sigma$ must be zero.  By analysing the curves
involved in defining the map $\sigma$ \cite[Section 5]{maydanskiyseidel} and
comparing to the maps involved in Seidel's the long exact sequence
\cite{seidelles}, we can, by Poincar\'{e} duality, identify the map $\sigma$
with an element of $HF_t^0(V_{m+1},\mu(V_{m+1}))$, which we shall also denote
by $\sigma$.

\begin{lem}
\emph{(\cite[Proposition 5.1]{maydanskiyseidel})}
If $HW_t^*(\Delta_{m+1},\Delta_{m+1}) \neq 0$, then $\sigma$ vanishes.
\end{lem}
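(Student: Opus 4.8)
The plan is to show that the existence of a nonzero class in $HW_t^*(\Delta_{m+1},\Delta_{m+1})$ forces $\sigma$, viewed via the long exact sequence above as an element of $HF_t^0(V_{m+1},\mu(V_{m+1}))$, to be zero. We already know from the discussion preceding the statement that $\sigma$ must annihilate the unit: the unit $1 \in HF_t^*(\Delta_{m+1},\Delta_{m+1}^{\epsilon}) = \Lambda_{\ZZ/2}$ survives to a nonzero element of $HW_t^*(\Delta_{m+1},\Delta_{m+1})$ precisely when it is not hit by the connecting map, i.e.\ when the composite through $HF^*_t(\mu(V_{m+1}),V_{m+1})$ is trivial on it. So the first step is simply to unwind the exact triangle: nonvanishing of $HW_t^*$ (which is a unital ring, and nonvanishing of a unital ring means the unit is nonzero) implies that the image of $\sigma$ in the first term of the triangle is not everything, and in particular that the unit is in the kernel of the relevant map; tracing through the identification of $\sigma$ with an element of $HF_t^0(V_{m+1},\mu(V_{m+1}))$ via Poincar\'e duality then shows that element is zero.

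The key structural input, which I would borrow verbatim from \cite[Section 5]{maydanskiyseidel}, is the description of the curves contributing to $\sigma$ and their comparison with Seidel's exact triangle \cite{seidelles}: this is what lets us reinterpret the connecting homomorphism of the wrapped spectral sequence as multiplication (or pairing) by a fixed Floer cohomology class in the fibre. Having set that up, the argument is purely algebraic: in the exact triangle $HF^*_t(\Delta_{m+1},\Delta^{\epsilon}_{m+1}) \to HF^*_t(\Delta_{m+1},\Delta_{m+1}^{2\pi+\epsilon}) \to HF^*_t(\mu(V_{m+1}),V_{m+1}) \xrightarrow{\sigma} $, the unit maps forward, and if $HW_t^* \neq 0$ then by taking direct limits the unit must persist, which is exactly the assertion that $\sigma$ kills it. Since under the Poincar\'e duality identification $\sigma$ \emph{is} that element of $HF_t^0(V_{m+1},\mu(V_{m+1}))$, it vanishes.

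The main obstacle is not the algebra but making sure the identification of $\sigma$ with a fibre class goes through in the \emph{nonexact} deformed setting $\o_t$. In \cite{maydanskiyseidel} this rests on comparing moduli spaces of sections of the Lefschetz fibration with moduli spaces of strips in the fibre, and on the fact that the continuation maps and the triangle maps are built from well-behaved counts; in our situation one must check that the analysis of Section \ref{sec:floer}, in particular the horizontal-perturbation trick that keeps $\pi$ holomorphic so that maximum principles confine curves, applies to all the moduli problems feeding into the spectral sequence and the map $\sigma$. Concretely one should verify that: (i) the spectral sequence of \cite{maydanskiyseidel} is constructed with almost complex structures of the form allowed in Section \ref{sec:acs}, so that the relevant curves stay in a compact region and Gromov compactness applies despite the lack of a priori energy bounds; (ii) disc bubbling is excluded along $\Delta_{m+1}$ and $V_{m+1}$ by the results of Section \ref{sec:discs}; and (iii) the Poincar\'e duality and the curve-counting comparison with \cite{seidelles} are insensitive to the deformation, which they are since they are formal consequences of the shape of the moduli spaces. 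Once these checks are in place — and they are essentially the content of Section \ref{sec:floer} — the proof is the one-line algebraic observation above, so I would keep the exposition brief and refer to \cite[Proposition 5.1]{maydanskiyseidel} for the parts that transfer without change.
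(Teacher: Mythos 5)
The proposal takes the same route as the paper: both simply invoke \cite[Proposition 5.1]{maydanskiyseidel} and observe that the triangle-chase is formal once the identification of $\sigma$ with a fibre Floer class is known to transfer to the nonexact deformed setting, which is exactly what Section \ref{sec:floer} of the paper is set up to ensure, and you correctly isolate this as the only nontrivial point.

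One caution about the write-up rather than the mathematics: several sentences state the algebra backwards or as a biconditional when only one implication holds. ``Precisely when'' is too strong, since surviving the entire direct system is stronger than not being hit at the first stage (for the lemma we only need the one direction, namely that $HW_t^*\neq 0$ forces survival at the first stage); ``the unit is in the kernel of the relevant map'' should be its negation; and ``$\sigma$ kills it'' misdescribes $\sigma$, whose source is the fibre group $HF^*_t(\mu(V_{m+1}),V_{m+1})$, not $\Lambda_{\ZZ/2}$. The clean statement is: nonvanishing of $HW_t^*$ forces the image of $1\in HF_t^*(\Delta_{m+1},\Delta_{m+1}^{\epsilon})\cong\Lambda_{\ZZ/2}$ in $HF_t^*(\Delta_{m+1},\Delta_{m+1}^{2\pi+\epsilon})$ to be nonzero, so $1\notin\im(\sigma)$ by exactness; since $1$ spans a one-dimensional $\Lambda_{\ZZ/2}$-vector space and $\Lambda_{\ZZ/2}$ is a field, $\im(\sigma)=0$ and hence $\sigma=0$.
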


\subsection{Fukaya categories}
\label{sec:fukaya}
We now shift attention to the Fukaya category of the fibre $\FF(E_z)$, and
introduce two related categories.

The first is a directed $\ainf$-category $\AA$, which has as objects the finite
set $\{V_1, \ldots, V_m\}$ and morphisms
\[
hom_{\mathcal{A}}(V_i,V_j) =
\begin{cases}
(\ZZ/2)e_i & \text{for $i=j$} \\
(\ZZ/2)f_i & \text{for $i=j-1$} \\
0 & \text{otherwise,}
\end{cases}
\]
where the degrees are chosen to be $|e_i|=0$ and $|f_i|=1$.  This category is
chosen to reflect the fact that we have an $A_m$ configuration of Lagrangian
spheres $V_i \subset M_m$ coming from the straightline paths in Figure
\ref{fig:maydanskiyseidel}, where the only points of intersection are between
adjacent spheres and the gradings can be chosen in a nice way.  This determines
the higher-order $\ainf$-structure, namely that the only nontrivial higher
products are given by $\mu^2(e_i,e_i)=e_i$ and $\mu^2(f_i,e_i)=f_i
=\mu^2(e_{i+1},f_i)$.

The second variant of the Fukaya category we shall consider is the
$\ainf$-category $\BB$, which is the subcategory of the Fukaya category
$\FF(E_z)$ generated by the following collection of Lagrangian submanifolds
\[
V_1, \ldots, V_m,V_{m+1}, \tau_{V_m}(V_{m+1}),
\tau_{V_{m-1}}\tau_{V_m}(V_{m+1}), \ldots, \tau_{V_1}\ldots
\tau_{V_m}(V_{m+1}).
\]
In \cite{maydanskiyseidel}, there is no need to restrict attention specifically
to $\BB$ and we can happily work with the whole Fukaya category $\FF(E_z)$,
even though as above we do not strictly need to.  However, all the objects in
$\BB$ are disjoint from the region $R$ where $\o_t$ is nonexact and we can use
maximum principles to ensure that all pseudoholomorphic curves between these
objects also do not enter the region $R$.  This means there is no extra
analysis to do in defining the $\ainf$-category $\BB$ as we are essentially
just in an exact setting.

In what follows, we shall also want to use Seidel's long exact sequence in
Floer cohomology \cite{seidelles}.  Part of the proof of this long exact
sequence in \cite{seidelles} relies on a spectral sequence argument coming from
a filtration on Floer cochain groups given by the symplectic action
functional.  Seidel needs to upgrade this $\RR$-filtration to some
$\ZZ$-subfiltration in order to show that a certain mapping cone is acyclic, which can be done since the action spectrum will be discrete for finitely many exact Lagrangians in an exact symplectic manifold.  In $\BB$, this argument remains valid since maximum principles mean that we are considering the same holomorphic curves with the same actions as in the exact case, although this approach would not work in general.

We can consider the ``derived'' versions of $\AA$ and $\BB$ defined via twisted
complexes as $D\AA = H^0(Tw\AA)$ and $D\BB = H^0(Tw\BB)$ \cite{seidelbible}.
There is a canonical (up to quasi-isomophism) functor $\iota : \AA \to \BB$
which on the derived level extends to an exact functor $D\iota : D\AA \to
D\BB$.

On the level of derived Fukaya categories $D\BB$, thanks to the result of
Seidel \cite{seidelbible} relating algebraic and geometric twisting operations,
$\sigma$ corresponds to an element $S \in hom_{D\BB}(V_{m+1}, T_{V_1} \cdots
T_{V_m} V_{m+1})$.  If $\sigma$ vanishes $S$ must too, so, looking at exact
triangles in $D\BB$, this means that
\[
V_{m+1}[1] \oplus T_{V_{1}} \cdots T_{V_{m}} V_{m+1} \cong Cone(S),
\]
so we wish to understand $C=Cone(S)$.

Given all this, the next lemma is pure algebra.

\begin{lem}
\emph{(\cite[Proposition 6.2]{maydanskiyseidel})}
If $S=0$, then $V_{m+1}$ is isomorphic to a direct summand of an object lying
in the image of the functor $D\iota \colon D\AA \to D\BB$.
\end{lem}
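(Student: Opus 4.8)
The argument is purely formal, taking place inside the triangulated category $D\BB$; we follow \cite[Proposition 6.2]{maydanskiyseidel}. Since $S=0$, the exact triangle $V_{m+1}\xrightarrow{S} T_{V_1}\cdots T_{V_m}V_{m+1}\to Cone(S)\to V_{m+1}[1]$ splits, so that $Cone(S)\cong V_{m+1}[1]\oplus T_{V_1}\cdots T_{V_m}V_{m+1}$; in particular $V_{m+1}$ is, up to shift, a direct summand of $C = Cone(S)$. It therefore remains to exhibit an object of the image of $D\iota$ which has $V_{m+1}$ as a direct summand, which we do by understanding $C$ further.

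Next I would unwind the iterated twist $T_{V_1}\cdots T_{V_m}V_{m+1}$, applying Seidel's exact triangle \cite{seidelles} once for each of the twists $T_{V_m},T_{V_{m-1}},\dots,T_{V_1}$ in turn. Writing $U_0 = V_{m+1}$ and $U_j = T_{V_{m+1-j}}(U_{j-1})$, the $j$-th triangle relates $HF(V_{m+1-j},U_{j-1})\otimes V_{m+1-j}$, $U_{j-1}$ and $U_j$, so $U_j$ is an extension of a sum of shifted copies of $V_{m+1-j}$ by $U_{j-1}$. Iterating and applying the octahedral axiom, $U_m = T_{V_1}\cdots T_{V_m}V_{m+1}$ fits into an exact triangle $V_{m+1}\to U_m\to C'\to V_{m+1}[1]$ whose third term $C'$ is assembled from $V_1,\dots,V_m$ alone. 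Crucially, since $V_1,\dots,V_m$ form an $A_m$-chain of Lagrangian spheres in the fibre --- non-adjacent ones are disjoint, so $HF(V_i,V_j) = 0$ whenever $|i-j|\geq 2$, and the only $\ainf$-products among the $V_i$ are the directed ones recorded in $\AA$ --- every structure map and higher product entering $C'$ already lies in the subcategory $\AA\subset\FF(E_z)$. Hence $C'$ is isomorphic to $D\iota$ of the corresponding twisted complex over $\AA$, i.e.\ to an object in the image of $D\iota$.

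Finally I would combine the two pictures. This step uses the geometric origin of $S$: it arises in Section \ref{sec:wrapped} from the outer monodromy $\mu$ of the Lefschetz fibration, and by Seidel's identification of geometric Dehn twists with algebraic twist functors \cite{seidelles,seidelbible} the element $S\in hom_{D\BB}(V_{m+1}, T_{V_1}\cdots T_{V_m}V_{m+1})$ is compatible with the iterated twist triangle produced above. Combined with the splitting $Cone(S)\cong V_{m+1}[1]\oplus U_m$ and the triangle $V_{m+1}\to U_m\to C'\to V_{m+1}[1]$, this forces $V_{m+1}$ to appear as a direct summand of an object in the image of $D\iota$, exactly as in \cite[Section 6]{maydanskiyseidel}.

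I expect this last matching step to be the main obstacle. One has to verify, using Seidel's comparison of algebraic and geometric twisting together with the discreteness of the relevant action spectra --- which, as explained in Section \ref{sec:fukaya}, is available in $\BB$ because the maximum principle confines all the holomorphic curves involved to the region where $\o_t$ is exact --- that $S$ genuinely interacts with the twist triangle in the way required, rather than being an unconstrained element of $hom_{D\BB}(V_{m+1}, T_{V_1}\cdots T_{V_m}V_{m+1})$. Granting this, the remainder is formal manipulation with exact triangles.
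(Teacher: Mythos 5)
The paper does not itself prove this lemma; it cites it as \cite[Proposition 6.2]{maydanskiyseidel} and describes it as ``pure algebra,'' so there is no in-paper argument to compare against.

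Your sketch has the right overall shape: the $S=0$ splitting exhibits $V_{m+1}[1]$ as a direct summand of $Cone(S)$, and $Cone(S)$ should, by unwinding the iterated twist $T_{V_1}\cdots T_{V_m}V_{m+1}$ with Seidel's exact triangle and the octahedral axiom, be built from $V_1,\ldots,V_m$. But the pivotal claim --- that $Cone(S)$ lies in the image of $D\iota$ --- is not justified correctly. You write that ``every structure map and higher product entering $C'$ already lies in the subcategory $\AA\subset\FF(E_z)$''; this is false as stated, because $\AA$ is \emph{not} a subcategory of $\FF(E_z)$ or of $\BB$. It is the directed $A_m$-category, whose morphism spaces $hom_{\AA}(V_i,V_j)$ vanish for $i>j$ and contain only the identity for $i=j$, whereas the genuine Fukaya category also sees the backward morphisms $HF^*(V_{i+1},V_i)$ and the top class in $HF^*(V_i,V_i)\cong H^*(S^2)$. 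The real work in \cite[Proposition 6.2]{maydanskiyseidel} is to check that the twisted-complex data coming out of the octahedral construction is supported on the directed morphisms alone --- this depends essentially on the ordering of the twists $T_{V_1}\cdots T_{V_m}$ and is not an automatic consequence of $C'$ being ``assembled from $V_1,\dots,V_m$.'' Your proposal skips this. By contrast, your last paragraph raises a concern that is already settled: the identification of $\sigma$ with the natural twist morphism $S$ is carried out before the lemma via Seidel's comparison of geometric and algebraic twists, so within the lemma $S$ \emph{is} the canonical arrow $V_{m+1}\to T_{V_1}\cdots T_{V_m}V_{m+1}$ and nothing about its geometric origin remains to be re-derived.
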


\subsection{Contradiction}
The fibre $E_z$ itself admits a Lefschetz fibration as pictured at the start of
this section, such that the matching cycles of interest arise from matching
paths $\gamma_1, \ldots, \gamma_{m+1}$.  By assumption, $\gamma_{m+1}$ is not
isotopic to $\gamma_i$ for $1\leq i \leq m$ within the class of paths which
avoid the critical values except possibly at their endpoints.

\begin{lem}
\label{thm:contradiction}
\emph{(\cite[Lemma 7.2]{maydanskiyseidel})}
For $1 \leq i \leq m$, and for all $t \in [0,1]$, the image of the product map
\[
HF_t^*(V_{m+1},V_i) \otimes HF_t^*(V_i,V_{m+1}) \to HF_t^*(V_{m+1},V_{m+1})
\cong H^*(V_{m+1}; \Lambda_{\ZZ/2})
\]
does not contain the identity in its image.
\end{lem}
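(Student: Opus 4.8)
The plan is to reduce the statement to a Floer-theoretic computation taking place entirely inside the four-dimensional fibre $E_z$, where the relevant Lagrangian spheres $V_1,\ldots,V_{m+1}$ live, and where we are (morally) in the exact setting thanks to the maximum-principle observations of Section \ref{sec:fukaya}. First I would recall the geometric picture: $V_i$ and $V_{m+1}$ are matching cycles in a Lefschetz fibration over $\CC$, $V_{m+1}$ coming from the curved path $\gamma_{m+1}$ and $V_i$ from a straightline path $\gamma_i$. By hypothesis $\gamma_{m+1}$ is not isotopic (rel endpoints, avoiding critical values) to any $\gamma_i$. The product map in question is the composition $HF^*(V_{m+1},V_i)\otimes HF^*(V_i,V_{m+1})\to HF^*(V_{m+1},V_{m+1})$, which under the pair-of-pants product is the standard ``triangle'' operation; the target is identified with $H^*(V_{m+1};\Lambda_{\ZZ/2})=H^*(S^2;\Lambda_{\ZZ/2})$ via the canonical isomorphism from Section \ref{sec:discs}. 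Since $V_{m+1}$ is a Lagrangian $S^2$ in a $4$-manifold and its Floer cohomology is that of $S^2$, the statement ``the identity is not hit'' is equivalent to saying the product does not surject, i.e. its image lies in the top-degree part $H^2$.

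The key mechanism, following \cite[Lemma 7.2]{maydanskiyseidel}, is a \emph{degree} or \emph{index} count for the holomorphic triangles contributing to this product, combined with the fact that the two vanishing cycles come from non-isotopic paths. Concretely, I would: (1) use the grading conventions (available since $c_1=0$, so Floer groups are $\ZZ$-graded as in Section \ref{sec:HFdefn}) to pin down the degrees of the generators of $HF^*(V_{m+1},V_i)$ and $HF^*(V_i,V_{m+1})$; these are controlled by the local intersection pattern of the matching paths, so that the only way to produce a degree-$0$ output class would require a specific geometric configuration; (2) observe that such a configuration would correspond to the curved path $\gamma_{m+1}$ being deformable past $\gamma_i$ in a way contradicting the non-isotopy hypothesis — equivalently, it would produce a holomorphic triangle whose boundary realizes an isotopy between the two vanishing cycles of $\gamma_{m+1}$ and $\gamma_i$. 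The $t$-independence is then immediate: all the Lagrangians $V_1,\ldots,V_{m+1}$ lie outside the region $R$ where $\o_t$ is nonexact, and the maximum principle (Section \ref{sec:acs}, \ref{sec:fukaya}) confines all contributing holomorphic triangles to that exact region, so the entire computation is literally the same as the $t=0$ case treated in \cite{maydanskiyseidel}; hence the conclusion holds verbatim for all $t\in[0,1]$.

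The main obstacle, and the place where I would spend most care, is step (2): making precise why a triangle hitting the identity class forces an isotopy of paths. This is exactly where \cite{maydanskiyseidel} does genuine work — one must analyze the moduli space of triangles of index $0$ with boundary on $V_{m+1}, V_i, V_{m+1}$, use the Lefschetz-fibration structure of $E_z$ to project these triangles to the base, and invoke a Picard--Lefschetz / open mapping argument showing that a nonempty such moduli space would exhibit $\gamma_{m+1}$ and $\gamma_i$ as isotopic (perhaps after applying some Dehn twists, which is fine since the $\tau_{V_j}$'s act on the path and preserve the isotopy class in the relevant sense). Since the paper explicitly says it will ``not explain all the details fully'' and refers to \cite[Lemma 7.2]{maydanskiyseidel}, I would state the index computation carefully, note that the confinement of curves to the exact region means no new analytic phenomena arise (no bubbling, by Section \ref{sec:floer}, and genuine energy bounds from exactness of $\o_t$ restricted to the support region), and then cite the geometric isotopy argument of \cite{maydanskiyseidel} to conclude. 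The one new point to verify relative to the exact case is simply that the $\ainf$-structure on $\BB$ and the long exact sequence of \cite{seidelles} are available here, which was already established in Section \ref{sec:fukaya}.
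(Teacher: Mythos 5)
You correctly identify the $t$-independence argument: the matching paths $\gamma_1,\ldots,\gamma_{m+1}$ all lie outside the region $R$ where $\o_t$ is nonexact, so after adapting the almost complex structures to the Lefschetz fibration on the fibre $E_z$ the maximum principle confines all contributing holomorphic triangles to the region where $\o_t$ is exact, and the computation is the same for every $t$. That is exactly the paper's justification for transferring the result from the exact setting.

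However, the mechanism you propose for the core of the lemma --- that the product does not hit the identity --- is not the one used in the paper or in the cited \cite[Lemma 7.2]{maydanskiyseidel}, and I do not see how to make your version work. You propose an index or degree count for the holomorphic triangles together with a projection-to-the-base argument purporting to show that a nonempty index-$0$ triangle moduli space would exhibit $\gamma_{m+1}$ and $\gamma_i$ as isotopic. No such analysis of triangle moduli spaces appears in either paper, and the step ``a holomorphic triangle realizes an isotopy of matching paths'' has no obvious precise formulation. The actual argument is a clean test-object argument: one introduces an auxiliary noncompact Lagrangian $L_\xi \cong S^1 \times \RR$ in the fibre $E_z$, associated to a path $\xi$ chosen (using precisely the non-isotopy hypothesis) to intersect $\gamma_{m+1}$ but to be disjoint from $\gamma_i$. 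Then $HF_t^*(L_\xi,V_i)=0$ for trivial reasons, while $HF_t^*(L_\xi,V_{m+1})\neq 0$ by the theorem of Khovanov--Seidel \cite{khovanovseidel}. If $a_1 \in HF_t^*(V_{m+1},V_i)$ and $a_2 \in HF_t^*(V_i,V_{m+1})$ had product a unit, then the composite
\[
HF_t^*(L_\xi,V_{m+1}) \stackrel{a_1\cdot}{\longrightarrow} HF_t^*(L_\xi,V_i) \stackrel{a_2\cdot}{\longrightarrow} HF_t^*(L_\xi,V_{m+1})
\]
would be an isomorphism of nonzero groups factoring through zero, a contradiction. The auxiliary Lagrangian $L_\xi$ is the idea missing from your proposal, and without it the non-isotopy hypothesis never actually gets used in a way that connects to the Floer product.
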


As in \cite{maydanskiyseidel}, this is proved by considering the auxiliary Lagrangian $L_{\xi} \cong S^{1}\times \RR$ associated to the path $\xi$ in Figure \ref{fig:contradiction}. The key point is that, since $\gamma_i$ is not isotopic to
$\gamma_{m+1}$, we can draw $\xi$ so that it intersects $\gamma_{m+1}$ but is
disjoint from $\gamma_i$ (here we
have drawn only two of the matching paths, $\gamma_{m+1}$ and $\gamma_i$, to
avoid clutter).  

It is proven in \cite{khovanovseidel} that $\dim HF_t^*(L_{\xi}, V_{m+1}) >0$,
whereas clearly we have $\dim HF_t^*(L_{\xi},V_i)=0$.  As before, we may choose $\xi$
to lie away from the region $R$ where our deforming 2-form is supported since, by assumption, this also true for the paths $\gamma_j$, so once
more we may use maximum principles to restrict all Floer solutions to a region
of $M_{m+1}$ where $\o_t$ is exact.

\begin{figure}[h!]
\[
\xy
(-20,0)*{\times};
(-10,0)*{\times};
(-10,0); (0,0)*{\times}; **\crv{};
(10,0)*{\times};
(5,0)*{\cdots};
(30,0)*{\times};
(18,10); (18,-10); **\dir{.};
(19,10); (19,-10); **\dir{.};
(20,10); (20,-10); **\dir{.};
(21,10); (21,-10); **\dir{.};
(22,10); (22,-10); **\dir{.};
(18.5,10); (18.5,-10); **\dir{.};
(19.5,10); (19.5,-10); **\dir{.};
(20.5,10); (20.5,-10); **\dir{.};
(21.5,10); (21.5,-10); **\dir{.};
(20,-13)*{R};
(-20,0); (10,0); **\crv{(-10,-18)&(1,20)};
(-7,-2)*{\gamma_i};
(-5,7)*{\gamma_{m+1}};
(5,0); (-25,15); **\crv{(0,15)&(5,15)};
(5,0); (-25,-15); **\crv{(0,-14)&(6,-16)};
(-10,-11.5)*{\xi};
\endxy
\]
\caption{}
\label{fig:contradiction}
\end{figure}

Suppose we have elements $a_1 \in HF_t^*(V_{m+1},V_i)$ and $a_2 \in
HF_t^*(V_i,V_{m+1})$ such that $a_2 \cdot a_1 \in H^0(V_{m+1})$, the invertible
part of this ring.

This then means that the composition
\[
HF_t^*(L_{\xi},V_{m+1})\stackrel{a_1 \cdot}{\rightarrow} HF_t^*(L_{\xi},V_i)
\stackrel{a_2 \cdot}{\rightarrow}
HF_t^*(L_{\xi},V_{m+1})
\]
is an isomorphism, which is a contradiction.

Once we have this, we can complete the proof of Theorem
\ref{thm:maydanskiyseidel}, the remainder of which carries over directly from
\cite{maydanskiyseidel} as it is essentially just algebra.

Suppose that $HW_t^*(\Delta_{m+1},\Delta_{m+1}) \neq 0$.  Then $V_{m+1}$ is
contained in the image of $D\iota: D\AA \to D\BB$.  Say that $V_{m+1}$ occurs
as a direct summand of $C$ in the image.  Then, in particular
\[
hom_{D\BB}(C,V_{m+1}) \otimes hom_{D\BB}(V_{m+1},C) \to
hom_{D\BB}(V_{m+1},V_{m+1}) \cong H^*(S^n;\Lambda_{\ZZ/2})
\]
contains the identity in its image as we can consider the maps corresponding to
projection and inclusion with respect to this summand.  However, thanks to the
particularly simple form of $\AA$, there exists a classification of twisted
complexes in $\AA$, following from Gabriel's theorem \cite{gabriel}. It says
that any twisted complex is isomorphic to a direct sum of (possibly shifted
copies of) the basic complexes $C_{kl}$
\[
C_{kl} =
\begin{cases}
W_i = \ZZ/2 & \text{for $k\leq i<l$ concentrated in degree $0$} \\
W_i=0 & \text{otherwise} \\
\delta_{i+1,i} = f_i  & \text{for $k\leq i<l$} \\
\delta_{ij}=0 & \text{otherwise.}
\end{cases}
\]
However, by repeated application of our Lemma \ref{thm:contradiction} above, we
derive a contradiction, since the terms in the $C_{kl}$ correspond
geometrically to $V_i$ involved there.  This completes the proof that
$HW_t^*(\Delta_{m+1},\Delta_{m+1})=0$, and therefore, by Lemma
\ref{thm:hwvanishes}, there cannot exist a homologically essential Lagrangian
sphere in $\tilde{X}_2^t$.

\section{\texorpdfstring{Distinguishing $X_1$ and $X_2$}{Distinguishing X1 and
X2}}
\label{sec:end}

\subsection{Moser for symplectic manifolds convex at infinity}
Take a symplectic manifold $\mo$ which is convex at infinity.  Recall that this
means that there is a relatively compact set $M^{in}$ such that on a
neighbourhood of the boundary $\partial M^{in}$ we have a 1-form $\theta$ such
that $d\theta = \o$ and $\theta|_{\partial M^{in}}$ is a contact 1-form, and
that $M \setminus M^{in}$ looks like the positive symplectization of $\partial
M^{in}$ according to $\theta|_{\partial M^{in}}$.

Suppose that we have a family of cohomologous 2-forms $(\o_t)_{t \in [0,1]}$
which make $M^{in}$ a symplectic manifold with convex boundary.  We can
complete $(M^{in},\o_t)$ to a family $(M,\widehat{\o}_t)$ of noncompact
symplectic manifolds with cohomologous symplectic forms all convex at infinity.
We want to prove a version of Moser's theorem \cite{moser} in this setting.

\begin{lem}
\label{thm:moser}
The family $(M,{\o}_t)$ above are all symplectomorphic, by symplectomorphisms
modelled on contactomorphisms at infinity.
\end{lem}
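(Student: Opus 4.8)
The plan is to adapt the classical Moser argument to the noncompact, convex-at-infinity setting by controlling everything carefully at infinity. First I would set up the interpolation in the usual way: since the $\o_t$ are cohomologous, write $\o_t - \o_0 = d\sigma_t$ for a smooth family of $1$-forms $\sigma_t$ on $M$, and seek a family of diffeomorphisms $\phi_t$ with $\phi_0 = \Id$ and $\phi_t^*\o_t = \o_0$, generated by a time-dependent vector field $X_t$ via $\dot{\phi}_t = X_t\circ\phi_t$. Differentiating the desired identity gives the Moser equation $\iota_{X_t}\o_t = -\dot{\sigma}_t$ (after absorbing $\sigma_t$ appropriately, i.e. writing $\o_t=\o_0+d\sigma_t$ and using $\dot\o_t=d\dot\sigma_t$, so one actually solves $\iota_{X_t}\o_t=-\dot\sigma_t$ up to an exact adjustment), which determines $X_t$ uniquely since each $\o_t$ is nondegenerate. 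The content of the lemma is then entirely about (a) making a good choice of the primitives $\sigma_t$ near infinity so that $X_t$ has the right asymptotic behaviour, and (b) showing the resulting flow $\phi_t$ exists for all time $t\in[0,1]$ on the noncompact manifold $M$.

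For step (a), I would work on the cylindrical end $M\setminus M^{in} \cong [0,\infty)\times\partial M^{in}$ with coordinate $r$. By the hypothesis, on this end each $\widehat\o_t = d(e^r\alpha_t)$ where $\alpha_t=\theta_t|_{\partial M^{in}}$ is a family of contact forms; more precisely the completion construction gives $\widehat\o_t = d(e^r\alpha_t)$ at least for $r$ large. Hence on the end we may take $\sigma_t = e^r(\alpha_t-\alpha_0)$, so that $\dot\sigma_t = e^r\dot\alpha_t$. I would then show that the Moser vector field $X_t$ solving $\iota_{X_t}\widehat\o_t=-\dot\sigma_t$ on the end is, for $r$ large, tangent to the level sets of $r$ (or differs from such a vector field by a controlled amount) — this is a pointwise linear-algebra computation using the explicit form $\widehat\o_t=d(e^r\alpha_t)=e^r(dr\wedge\alpha_t + d\alpha_t)$, together with $\dot\sigma_t=e^r\dot\alpha_t$: the $e^r$ factors cancel, so $X_t$ is $r$-independent in an appropriate sense and its flow preserves the cylindrical structure. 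Interpolating with a cutoff function between this choice on the end and an arbitrary choice of $\sigma_t$ on the compact piece $M^{in}$ (where no asymptotic control is needed), one gets a globally defined $\sigma_t$, hence a globally defined $X_t$.

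For step (b), completeness of the flow: on the compact part $M^{in}$ there is no issue, and on the end the estimate from step (a) shows $X_t$ has bounded norm with respect to a cylindrical metric (the $e^r$ growth of the form is exactly compensated by the $e^r$ growth of $\dot\sigma_t$), so trajectories cannot escape to infinity in finite time; standard ODE theory then gives a flow $\phi_t$ defined for all $t\in[0,1]$. Finally, since $X_t$ is tangent to the $\{r=\text{const}\}$ hypersurfaces near infinity and its flow intertwines $\widehat\o_t$ with $\widehat\o_0$, the restriction of $\phi_1$ to the end is a diffeomorphism of symplectizations carrying $e^r\alpha_1$ to $e^r\alpha_0$, which forces it to be (the symplectization of) a contactomorphism between the contact boundaries — this is the ``modelled on contactomorphisms at infinity'' clause. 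I expect the main obstacle to be the asymptotic analysis in step (a): one must choose the primitives $\sigma_t$ precisely enough that the $e^r$ factors cancel and $X_t$ is genuinely cylindrical (not merely bounded) near infinity, since otherwise the flow need not preserve convexity and need not descend to a contactomorphism; getting this cancellation cleanly, and handling the transition region where the cutoff lives, is the delicate part, whereas completeness and the rest are routine once the asymptotics are under control.
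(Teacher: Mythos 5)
Your proposal follows essentially the same route as the paper's proof: set up the standard Moser equation $\sigma_t + \iota(Y_t)\o_t = 0$, and observe that because the completed forms are cylindrical on the collar, $\widehat{\o}_t = d(e^r\alpha_t)$, the $e^r$ factors in the primitive and in the form cancel, so the Moser vector field is controlled near infinity and its flow can be integrated to time $1$. You in fact supply more detail than the paper does (the explicit choice of primitive on the end, the cutoff patching, and the identification of the time-one map at infinity with the symplectization of a contactomorphism), and your reading of the ``modelled on contactomorphisms at infinity'' clause is the intended one.
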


\begin{proof}
We follow the standard argument, but need to pay attention to possible problems
arising from the noncompactness of $M$.  Since the $\o_t$ are all cohomologous,
we pick $\sigma_t$ such that
\[
 \frac{d}{dt} \o_t = d \sigma_t.
\]
Then, Moser's theorem follows from considering the flow ${\psi_t}$ defined by
integrating the vector fields $Y_t$ determined by
\[
 \sigma_t + \iota(Y_t)\o_t = 0,
\]
although we need to be careful that we can actually integrate $Y_t$ all the way
to time 1.
This can be done because our forms are all cylindrical at infinity, so the
vector fields obtained above will all scale according to $e^r$ as we move in
the $r$-direction along the collar.  This bound is enough to ensure we can
integrate to a flow.
\end{proof}

\subsection{Proof of Theorem \ref{thm:main}}
To prove Theorem \ref{thm:main} we just apply Lemma \ref{thm:moser} in our
case.  Let $\o_1,\o_2$ be the exact forms induced on $X_1,X_2$ respectively and
suppose, for a contradiction, that there exists a diffeomorphism $\phi \colon
X_2 \to X_1$ such that $\phi^*(\o_1)=\o_2$.

Then we also consider the deforming 2-forms $\eta_2$ and $\phi^*(\eta_1)$
defined on
$X_2$ and by rescaling we may assume without loss of generality that these two
2-forms are cohomologous (since $H^2(X_i;\RR) =\RR$).  We now consider the
family of 2-forms on $X_2$
\[
 \Omega_t = (1-t)(\o_2 +\eta_2) + t\phi^*(\o_1 +\eta_1) =\o_2 + t\phi^*(\eta_1)
+ (1-t)\eta_2.
\]
There exists some compact subset $X_2^{in}$ which is an interior for $X_2$ with
respect to $\Omega_0=\o_2 + \eta_2$, and by the compactness of both $X_2^{in}$
and its boundary, we can say that, after perhaps once more rescaling $\eta_1$
and $\eta_2$ if necessary, $\Omega_t$ makes $X_2^{in}$ a symplectic manifold
with convex boundary for all $t$.  However, $\Omega_t$ is not necessarily
cylindrical for all $t$ so we now change our family $\Omega_t$, by replacing
$\Omega_t|_{X_2^{out}}$ with the completion of $\Omega_t|_{X_2^{in}}$ to get a
new family of cohomologous symplectic forms $\tilde{\Omega}_t$ on $X_2^{in}
\cup_{\partial X_2^{in}}[0,\infty) \times \partial X_2$, which are all
cylindrical on the collar.  Therefore, by Lemma \ref{thm:moser},
$(X_2^{in},\tilde{\Omega}_t)$ are all symplectomorphic.

However, we can choose $X_2^{in}$ sufficiently large that it contains the image
$\phi^{-1}(L)$ of the Lagrangian sphere exhibited in Section
\ref{sec:maydanskiy}.  This is a contradiction of Theorem
\ref{thm:maydanskiyseidel}.

\section{Symplectic cohomology vanishes}
\label{sec:sh}

In this section, we digress from the main theme and discuss
symplectic cohomology.  All symplectic manifolds considered in this section
will be exact and we shall work with $\ZZ/2$-coefficients.  As mentioned in the
Introduction, symplectic cohomology is one of the standard invariants used to
examine and distinguish Liouville domains.  We prove that the symplectic
cohomology $SH^*(X_i;\ZZ/2)$ of $X_1$ and $X_2$ both vanish, thereby showing
that this invariant does not suffice to distinguish between the examples of
this paper, and so a different approach such as that of this paper truly is
needed.

We shall not define symplectic cohomology here; an appropriate definition may
be found in \cite{viterbosh1}, for example.  We shall instead refer to two
results from \cite{abouzaidseidelrecombination}.  In the formulation of these
two lemmas, we consider the Liouville domain $E$ to be built from fibre $M$ and
the collection of vanishing cycles $(V_1, \ldots, V_r)$ according to Lemma
\ref{thm:build}.  We denote by $\Delta_i$ the Lefschetz thimble associated to
$V_i$ in the corresponding Lefschetz fibration $\pi \colon E \to \CC$.

\begin{lem}
\label{thm:shhw}
\emph{(\cite[Property 2.3]{abouzaidseidelrecombination})}
For a Liouville domain $E$, constructed from $(M;V_1, \ldots, V_m)$,
$SH^*(E)=0$ if and only if $HW^*(\Delta_i, \Delta_i)=0$ for all $i$.
\end{lem}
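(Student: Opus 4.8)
Since Lemma~\ref{thm:shhw} is quoted from \cite{abouzaidseidelrecombination}, I will only sketch the argument I would use. The plan is to reduce everything to three structural facts about Lefschetz fibrations, which I would invoke as inputs: \textbf{(i)} the Lefschetz thimbles $\Delta_1,\dots,\Delta_m$ are objects of the wrapped Fukaya category $\WW(E)$ and split-generate it \cite{seidelbible,abouzaidseidelrecombination}; \textbf{(ii)} for every object $L$, the closed--open string map $\mathcal{CO}\colon SH^*(E) \to HW^*(L,L)$ is a unital ring homomorphism; and \textbf{(iii)} whenever a finite set of objects split-generates $\WW(E)$, the closed--open map assembles into an isomorphism $SH^*(E) \cong HH^*(\AA)$ onto the Hochschild cohomology of the full $\ainf$-subcategory $\AA$ they span.

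For the ``only if'' direction I would use \textbf{(ii)} alone: if $SH^*(E)=0$ then the unit $1 \in SH^0(E)$ is zero, so applying $\mathcal{CO}\colon SH^*(E) \to HW^*(\Delta_i,\Delta_i)$, which sends unit to unit, the unit of $HW^*(\Delta_i,\Delta_i)$ vanishes, whence $HW^*(\Delta_i,\Delta_i)=0$ for each $i$. No generation input is needed here.

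For the ``if'' direction, suppose $HW^*(\Delta_i,\Delta_i)=0$ for all $i$. An object with acyclic endomorphism algebra is a zero object, so each $\Delta_i$ is a zero object of $\WW(E)$; since $HW^*(\Delta_i,\Delta_j)$ is a module over $HW^*(\Delta_i,\Delta_i)$, in which $1=0$, it follows that $HW^*(\Delta_i,\Delta_j)=0$ for all $i,j$. Hence $\AA$ has all morphism complexes acyclic, its Hochschild cochain complex is acyclic, and $HH^*(\AA)=0$; by \textbf{(iii)}, $SH^*(E) \cong HH^*(\AA)=0$. (Equivalently, by \textbf{(i)} the category $\WW(E)$ is then quasi-equivalent to the zero category, which again forces $SH^*(E)=0$.)

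I expect the main obstacle to sit entirely in the structural inputs \textbf{(i)} and \textbf{(iii)}: split-generation of $\WW(E)$ by thimbles, and the identification of $SH^*(E)$ with the Hochschild cohomology of a split-generating subcategory via the closed--open map. Both are substantial theorems, and the argument above is only their formal shadow. An alternative, more self-contained route --- closer to \cite{abouzaidseidelrecombination} --- would be to build a complex computing $SH^*(E)$ directly out of the groups $HW^*(\Delta_i,\Delta_j)$ and the continuation maps furnished by the Lefschetz fibration, and then to verify that acyclicity of the diagonal pieces propagates through the resulting directed complex; there the real work would lie in the bookkeeping of that triangular complex and in controlling the continuation maps.
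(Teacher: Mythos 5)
This lemma is not proved in the paper at all: it is imported verbatim as \cite[Property 2.3]{abouzaidseidelrecombination}, so there is no internal argument to compare yours against, and your decision to treat it as a quoted result with only a sketch of its internal mechanism is the right call. Your sketch is a correct formal shadow of the standard argument. The ``only if'' direction via unitality of the closed--open map $SH^*(E)\to HW^*(\Delta_i,\Delta_i)$ is exactly right and needs no generation input. For the ``if'' direction your reduction is also sound: acyclic endomorphism algebras force each $\Delta_i$ to be a zero object, all $HW^*(\Delta_i,\Delta_j)$ vanish by the module structure, and then any statement identifying $SH^*(E)$ with a Hochschild invariant of the subcategory spanned by the thimbles (you use $HH^*$ via the closed--open map; the open--closed map into $HH_*$ works equally well) finishes the argument. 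As you say, the entire mathematical content sits in inputs \textbf{(i)} and \textbf{(iii)} --- generation of the wrapped Fukaya category by Lefschetz thimbles and the closed--open/open--closed isomorphism --- and these are precisely the theorems that \cite{abouzaidseidelrecombination} itself defers to companion work; so your sketch does not circularly assume less than the cited source does. Two minor cautions: the Hochschild complex of an $\ainf$-category with acyclic morphism complexes is acyclic only after a convergence argument for the length filtration (harmless here, since there are finitely many thimbles and the category is essentially directed), and your alternative ``triangular complex of the $HW^*(\Delta_i,\Delta_j)$'' route is in fact closer in spirit to how the spectral sequence of Section \ref{sec:maydanskiyseidel} is used elsewhere in this paper, so it would cohere well with the surrounding text if you ever needed to make the argument self-contained.
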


\begin{lem}
\label{thm:removeone}
\emph{(\cite[Property 2.5]{abouzaidseidelrecombination})}
Consider a Liouville domain $E$, constructed from $(M;V_1, \ldots, V_m)$ and
let $E'$ be the Liouville domain built from $(M; V_2, \ldots, V_m)$. Let
$\Delta_i,\Delta'_i$ be the Lefschetz thimbles in $E,E'$ respectively.  If
$HW^*(\Delta_1,\Delta_1)=0$ and $HW^*(\Delta'_i,\Delta'_i)=0$
for all $i$, then $HW^*(\Delta_i,\Delta_i)=0$ for all $i$.
\end{lem}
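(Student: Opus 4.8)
The plan is to recast everything in terms of zero objects in the split-closed derived wrapped Fukaya category $D^\pi\WW(-)$. I would use two standard facts. First, the Lefschetz thimbles of a Lefschetz fibration split-generate the wrapped Fukaya category of its total space; this is the mechanism underlying Lemma~\ref{thm:shhw}, and it gives that $D^\pi\WW(E)$ is split-generated by $\Delta_1,\ldots,\Delta_m$, while for $E'$ it gives that $D^\pi\WW(E')$ is split-generated by $\Delta'_2,\ldots,\Delta'_m$. Second, $HW^*(L,L)$ is a unital ring, so its vanishing forces the unit $1\in HW^0(L,L)$, hence the object $L$ itself, to be a zero object. Thus the hypotheses of the lemma translate to: $\Delta_1\cong 0$ in $D^\pi\WW(E)$, and $\Delta'_i\cong 0$ in $D^\pi\WW(E')$ for every thimble of $E'$, the latter meaning $D^\pi\WW(E')\simeq 0$.

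The geometric input I would use to compare the two categories is that passing from the data $(M;V_2,\ldots,V_m)$ to $(M;V_1,\ldots,V_m)$ attaches a single critical Weinstein handle to $E'$ along a Legendrian lift of $V_1$ in $\partial E'$, with $\Delta_1$ the co-core of that handle, while the remaining thimbles $\Delta_i$ ($i\geq 2$) can be isotoped off the attaching region so that inside the Liouville subdomain $E'\subset E$ they become the $\Delta'_i$. The key structural statement is then that Viterbo restriction induces an equivalence
\[
D^\pi\WW(E)/\langle\Delta_1\rangle\;\xrightarrow{\sim}\;D^\pi\WW(E'),\qquad \Delta_i\mapsto\Delta'_i\ (i\geq 2),\quad \Delta_1\mapsto 0,
\]
where $\langle\Delta_1\rangle$ is the thick subcategory generated by the co-core; informally, killing the co-core of the handle recovers the smaller manifold. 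This is a handle-attachment instance of the stop-removal principle, and a version sufficient here is already implicit in the Lefschetz-fibration formalism of \cite{abouzaidseidelrecombination}.

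Granting this, the lemma is formal, and we are entirely in the exact setting, so none of the nonexactness subtleties of Section~\ref{sec:floer} intervene. Since $D^\pi\WW(E')\simeq 0$, the Verdier quotient $D^\pi\WW(E)/\langle\Delta_1\rangle$ is trivial, which forces $D^\pi\WW(E)=\langle\Delta_1\rangle$, the whole category being generated by the co-core. But $\Delta_1\cong 0$, so $\langle\Delta_1\rangle\simeq 0$, hence $D^\pi\WW(E)\simeq 0$. In particular $HW^*(\Delta_i,\Delta_i)=\End^*_{D^\pi\WW(E)}(\Delta_i)=0$ for every $i=1,\ldots,m$, which is the assertion.

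The step I expect to be the main obstacle is establishing the quotient equivalence: one must construct the Viterbo restriction functor on these generating objects, check that $\Delta_i$ restricts to $\Delta'_i$ (and $\Delta_1$ to the zero object), and, crucially, identify its kernel precisely with $\langle\Delta_1\rangle$ so that it is a localization rather than merely an essentially surjective functor. An alternative, less formal route would avoid the quotient-category language and instead run the handle-attachment long exact sequence directly: for each $i\geq 2$ one expects a triangle relating $HW^*_E(\Delta_i,\Delta_i)$, $HW^*_{E'}(\Delta'_i,\Delta'_i)$ and a correction term built from $HW^*_E(\Delta_1,\Delta_i)$ and $HW^*_E(\Delta_i,\Delta_1)$, the contribution of the new handle; since $\Delta_1\cong 0$ those morphism spaces vanish, so $HW^*_E(\Delta_i,\Delta_i)\cong HW^*_{E'}(\Delta'_i,\Delta'_i)=0$, and for $i=1$ the vanishing is assumed. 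Making either the equivalence or this triangle rigorous is the real work; the rest is bookkeeping.
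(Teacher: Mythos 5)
The paper offers no proof of Lemma \ref{thm:removeone} to compare against: it is quoted verbatim from \cite[Property 2.5]{abouzaidseidelrecombination}, so your argument has to stand on its own, and as written it does not. The formal skeleton is fine (vanishing of the unital ring $HW^*(L,L)$ forces $L$ to be a zero object, and the Verdier-quotient bookkeeping is correct), but every load-bearing input is asserted rather than proved. Split-generation of the wrapped Fukaya category of a Lefschetz fibration by its thimbles is not ``the mechanism underlying'' Lemma \ref{thm:shhw} --- that lemma is an equivalence of vanishing criteria established inside the model of \cite{abouzaidseidelrecombination} --- and generation by thimbles (or co-cores) is itself a substantial theorem not contained in the literature this paper relies on. More seriously, the step you yourself flag as the main obstacle, that Viterbo restriction induces an equivalence $D^\pi\WW(E)/\langle\Delta_1\rangle \simeq D^\pi\WW(E')$ sending $\Delta_i \mapsto \Delta'_i$, is precisely the entire content of the lemma. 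The restriction functor of \cite{abouzaidseidelviterbo} is not shown there (or anywhere in the sources cited here) to be a localization whose kernel is the thick subcategory generated by the co-core of the attached handle; that identification is a Legendrian-surgery/stop-removal type statement at least as deep as the lemma itself, and it is not ``implicit'' in the formalism of \cite{abouzaidseidelrecombination}. The same criticism applies to your fallback: writing down the handle-attachment exact triangle with correction term built from $HW^*_E(\Delta_1,\Delta_i)$ and $HW^*_E(\Delta_i,\Delta_1)$ is exactly what would need to be proved, and nothing in the proposal does so. So the proposal reduces the lemma to unproven statements that are equivalent to or harder than it.

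If you want an argument in the spirit of the source actually being cited, the natural route is to work inside the framework of \cite{abouzaidseidelrecombination}, where $HW^*(\Delta_i,\Delta_j)$ is presented as a direct limit of ordinary Floer groups over wrapping, and to compare the directed systems attached to $(M;V_1,\ldots,V_m)$ and $(M;V_2,\ldots,V_m)$ directly; the discrepancy between the two is controlled by Floer groups against $\Delta_1$, and the hypothesis $HW^*(\Delta_1,\Delta_1)=0$ eliminates those contributions by a unit/module argument of the kind this paper uses in Sections \ref{sec:maydanskiyseidel} and \ref{sec:many}. Either way, the comparison mechanism is where the mathematics lives, and your write-up defers it entirely.
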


We also note that if $SH^*(E;\ZZ/2)=0$, then $E$ cannot contain any exact
Lagrangian submanifolds \cite{viterbosh1}.

Lemma \ref{thm:shhw} suffices to prove that Maydanskiy's exotic examples
\cite{maydanskiy} have vanishing symplectic cohomology, as do the exact
symplectic manifolds $X^j_n$ considered in Section \ref{sec:many}.  We now
prove that the exotic examples of Maydanskiy-Seidel, as well as their versions
obtained from adding a 2-handle in the way described in Section
\ref{sec:maydanskiyseidel} have vanishing symplectic cohomology.  Take some
exotic example $X_0$ from \cite{maydanskiyseidel}, as in Figure
\ref{fig:maydanskiyseidel}, but without the extra rightmost critical point.

The proof in \cite{maydanskiyseidel}, as outlined in Section
\ref{sec:maydanskiyseidel}, shows that
$HW^*(\Delta_{m+1},\Delta_{m+1})=0$.  We apply Lemma \ref{thm:removeone} in
this setting, and remark that this lemma still holds if we  remove the final
vanishing cycle instead of the first.  If we restrict to the $A_m$
configuration of vanishing cycles $(V_1,
\ldots, V_m)$ in Figure \ref{fig:maydanskiyseidel}, then $X'_0$ is just
isomorphic to the standard ball.  This
means that if we compute $HW^*(\Delta'_i,\Delta'_i)$, we get zero
as all the Floer groups involved in the definition of
$HW^*(\Delta'_i,\Delta'_i)$ will vanish.  This suffices to prove that
$HW^*(\Delta_i,\Delta_i)=0$ for all $i$, and so
$SH^*(X_0)=0$.

We construct the manifold $X_2$ of Section \ref{sec:maydanskiyseidel} by adding
a 2-handle to $M_m$.  However, because this handle is added away from all the
vanishing cycles, we can just view this as a subcritical handle added to $X_0$,
as opposed to a critical one added to $M_m$ since $X_0$ is a product fibration
in the region where the handle is attached.  Cieliebak's result
\cite{cieliebakhandle} says that $SH^*(X_2)=SH^*(X_0)$ is still zero.  In
particular we have

\begin{thm}
$X_2$ and $X_0$ are both empty as exact symplectic manifolds, in the sense of
containing no exact Lagrangian submanifolds.
\end{thm}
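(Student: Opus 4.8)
The plan is to read off the statement from two facts already in hand: the vanishing of symplectic cohomology proved just above, and the standard implication recorded in this section (following \cite{viterbosh1}) that $SH^*(E;\ZZ/2)=0$ forces a Liouville domain $E$ to contain no exact Lagrangian submanifolds. So there is essentially nothing new to prove; the task is simply to assemble the pieces correctly for each of $X_0$ and $X_2$.

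Concretely, first I would collect the two vanishing results. For $X_0$: the argument recapped in Section~\ref{sec:maydanskiyseidel} gives $HW^*(\Delta_{m+1},\Delta_{m+1})=0$; Lemma~\ref{thm:removeone}, applied so as to delete the \emph{final} vanishing cycle rather than the first, reduces the remaining computation to the $A_m$ configuration $(V_1,\ldots,V_m)$, whose Liouville domain $X'_0$ is the standard ball and hence has all $HW^*(\Delta'_i,\Delta'_i)=0$ (every Floer group in sight vanishes); Lemma~\ref{thm:shhw} then yields $SH^*(X_0)=0$. For $X_2$: the $2$-handle is attached in the product region of $X_0$, away from every vanishing cycle, so it may be regarded as a subcritical Weinstein handle attached to $X_0$, and Cieliebak's invariance theorem \cite{cieliebakhandle} gives $SH^*(X_2)\cong SH^*(X_0)=0$.

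Then I would apply the cited consequence of vanishing symplectic cohomology separately to $E=X_0$ and $E=X_2$ to conclude that neither contains an exact Lagrangian submanifold. The only point requiring a word of justification is that both spaces are genuine Liouville domains, so that $SH^*$ is defined and the implication applies: for $X_0$ this is Lemma~\ref{thm:build} applied to an exact collection of vanishing cycles, and for $X_2$ it is because a subcritical handle attachment preserves the Liouville structure. I do not anticipate any real obstacle here --- this theorem is a bookkeeping corollary of the substantially harder wrapped Floer cohomology computations of the earlier sections, and the ``main step'' is simply invoking Lemmas~\ref{thm:shhw}--\ref{thm:removeone} together with \cite{cieliebakhandle,viterbosh1} in the right order.
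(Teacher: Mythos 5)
Your proposal matches the paper's argument step for step: compute $SH^*(X_0)=0$ via $HW^*(\Delta_{m+1},\Delta_{m+1})=0$ and Lemma~\ref{thm:removeone} (deleting the last vanishing cycle so that the remaining $A_m$ configuration gives the standard ball), extend to $X_2$ by Cieliebak's subcritical handle invariance, and then invoke the Viterbo-type consequence that vanishing symplectic cohomology rules out exact Lagrangians. This is precisely the paper's proof, with the addition of a brief (and correct) remark about Liouville structure.
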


\begin{rmk}
It is sometimes possible to define symplectic cohomology with respect to some
nonexact symplectic form.  Ritter \cite{ritterdeformedsh} shows that, if one
performs a nonexact deformation of the exact symplectic form, then this is the
same as computing the symplectic cohomology of the original structure, but with
coefficients in some twisted Novikov bundle: $SH^*(M, d\theta + \eta)= SH^*(M,
d\theta;\underline{\Lambda}_{\tau \eta})$.  This has implications for the
existence of exact Lagrangians and it would be interesting to compare the
results of this paper with this viewpoint.
\end{rmk}

\section{Many inequivalent exotic symplectic forms}
\label{sec:many}

\subsection{An invariant}
We shall now extend the ideas of Section \ref{sec:end} in order to prove
Theorem \ref{thm:many}.  Suppose we have a symplectic manifold $(E,\o)$ which
is convex at infinity and such that the map $H^2(E; \RR) \to H^2(\partial E;
\RR)$ is zero.  Then, given any cohomology class $\eta \in H^2(E; \RR)$, we can
construct a deformation of $E$ in the sense of Section
\ref{sec:maydanskiyseidel} in the direction of $\eta$, in other words
$[\omega_t] = [\o + t\epsilon \eta] \in H^2(E;\RR)$ for some small $\epsilon
>0$.

Suppose in addition that $(E,\o)$ contains no homologically essential
Lagrangian sphere.  We denote by $\Gamma_1(E,\o)$ the set of directions $l \in
\mathbb{P}(H^2(E; \RR))$ such that, after constructing a ``small'' deformation of
$(E,\o)$ in direction $l$, we \emph{still have no} homologically essential
Lagrangian sphere.  The Moser-type argument from Section \ref{sec:end} says
that this set is well-defined (up to projective linear equivalence).

We can likewise consider the invariants $\Gamma_k(E,\o)$, which are the set of
$k$-planes $P_k$ in the Grassmanian $Gr(H^2(E; \RR))$, such that we get no
homologically essential Lagrangian sphere \emph{for every} direction $l$
contained in $P_k$.  These are again invariants up to the correct notion of
linear equivalence, and so in particular, if we get a finite set of such
planes, the cardinality of $\Gamma_k(E,\o)$ is invariant.

\subsection{The construction}
We now extend the construction of Maydanskiy \cite{maydanskiy} to exhibit, for
any $n \geq 1$, a Liouville manifold which admits $n+1$ symplectic forms $\o_k$
all of which have no homologically essential exact Lagrangian sphere (in fact
which have vanishing symplectic cohomology $SH^*(E,\o_k;\ZZ/2)$ and therefore
no exact Lagrangian submanifolds), but such that there exists no diffeomorphism
$\phi$ of $E$ such that $\phi^*\o_i = \o_j$ for $i \neq j$.

Take the points $0,1, \ldots, n+1 \in \CC$ and consider two paths in $\CC$ as
in Figure \ref{fig:many}.  The first $\gamma_0$ joins the extreme crosses and
goes over all the others.  We have some choice in the second path and denote by
$\gamma_j$ the path which goes below the points $1, \ldots, j$ and then over
$j+1, \ldots, n$.  (We include here the possibility that the second path
actually goes over all central crosses and in this case just consider it to be
another copy of $\gamma_0$.)

\begin{figure}[h!]
\[
\xy
(-20,0)*{\times}; (40,0)*{\times}; **\crv{(5,15)};
(-20,0)*{}; (40,0)*{}; **\crv{(-18,15)&(3,-15)};
(-20,-3)*{0};
(40,-3)*{n+1};
(-10,0)*{\times}; (-10,-10)*{}; **\dir{.};
(10,0)*{\cdots \cdots \cdots};
(0,0)*{\times}; (0,-10)*{}; **\dir{.};
(20,0)*{\times}; (20,-10)*{}; **\dir{.};
(30,0)*{\times}; (30,-10)*{}; **\dir{.};
(-6,-6)*{\gamma_j};
(29,6)*{\gamma_0};
\endxy
\]
\caption{}
\label{fig:many}
\end{figure}

With the same conventions as before, having made our choice of $\gamma_j$, we
can associate to Figure \ref{fig:many} the 6-dimensional manifold
$(X^j_n,\o_j)$, which is diffeomorphically $T^*S^3$ with $n$ 2-handles
attached.  It is the total space of a Lefschetz fibration whose generic fibre
is the $A_{n+1}$ Milnor fibre, which we shall denote $M_{n+1}$.  Associated to
each dotted line we get a Lagrangian 2-ball $B_i \subset M_{n+1}$ for $1\leq i
\leq n$, and we denote by $V_0$ and $V_j$ the two matching paths associated to
the paths $\gamma_0$ and $\gamma_j$.  If $\gamma_j=\gamma_0$, the 6-manifold we
obtain clearly contains a Lagrangian $S^3$, coming from the zero-section of
$T^*S^3$.  We shall denote by $\Delta_0,\Delta_j$ the Lefschetz thimbles
associated to the two critical points of the Lefschetz fibration $\pi \colon
X^j_n \to \CC$.

$H_2(M_{n+1} ; \RR) \cong \RR^{n+1}$ and we shall choose as our standard basis
the spheres $A_i$ given by straightline paths joining adjacent critical points
$i-1$ and $i$ in Figure \ref{fig:many}.  When included into our total space,
these all determine nonzero homology classes in $E$, but now with the relation
$\sum{A_i}=0$.  We shall therefore choose to identify $H^2(E;\RR)$ with the
$n$-dimensional vector space $V = \{ v \in \RR^{n+1} : \sum v_i = 0\}$.

Pick some vector $\mathbf{v} = (v_1, \ldots, v_n,v_{n+1}) \in V$.  By the same
process as in Section \ref{sec:maydanskiy}, we can construct a deformation of
the symplectic structure on $M_{n+1}$, by adding on 2-forms in the regions
between the critical point weighted according to the components.  The condition
on $\mathbf{v}$ means the that the homological obstruction to the matching
paths above defining matching cycles vanishes, so we can once more build the
corresponding deformation of $(X^j_n,\o_j)$.  We are interested in what choices
of $j$ and $\mathbf{v}$ mean that $(X^j_n,\o_j)$ contains a Lagrangian sphere
after the deformation coresponding to $\mathbf{v}$.

We first observe that, as in Section \ref{sec:maydanskiy}, we shall get a
Lagrangian sphere in $X^j_n$ when we can ``lift'' $V_j$ over the critical
points and onto $V_0$.  For this to be true, we need
\[
\sum^k_r v_r \neq 0 \mbox{ for all } k \leq j.
\]
In this case we shall get a Lagrangian sphere in $X_j^n$ once we perturb in the
direction of $\mathbf{v}$.  We shall now show that in all other cases we do not
get such a sphere.

Fix some direction $\mathbf{v} \in V$.  In what follows, we shall as before
denote by $HF^*_t$ the Floer cohomology group computed with respect to the
time-$t$ deformation of $\o$ in the direction of $\mathbf{v}$.  For the same
reasons as already discussed, all these groups are well-defined (perhaps after
rescaling $\mathbf{v})$.

Suppose that there is a homologically essential Lagrangian sphere $L \subset
(X_n^j,\o_t)$.  Then, as in Section \ref{sec:maydanskiyseidel}, we must have $L
\cdot \Delta_j \neq 0$, which implies that $HW_t^*(\Delta_j, \Delta_j) \neq 0$.
This wrapped Floer group fits in an exact triangle as before.
\begin{eqnarray}
\label{eqn:triangle}
\xymatrix{
{HF^*_t(\Delta_j,\Delta^{\epsilon}_j)} \ar[r] & {HF^*_t(\Delta_j,\Delta_j^{2\pi
+\epsilon})} \ar[d]  \\
 & {HF^*_t(\mu(V_j),V_j).} \ar[ul]
}
\end{eqnarray}
where the bottom group is calculated in the fibre $E_z$.  Here $\mu$ is, up to
isotopy, $\tau_{V_0} \circ \tau_{V_j}$, so we shall need to consider the group
$HF^*_t(\tau_{V_0}V_j,V_j)$.

The argument in this section largely follows that found in \cite{maydanskiy},
from where we reproduce the following basic observation.

\begin{lem}
If we have an exact triangle of graded vector spaces
\[
\xymatrix{
{K} \ar[r]^F & {L} \ar[d]  \\
 & {M,} \ar^{[1]}[ul]
}
\]
then $\rk(M) = \rk(K) + \rk(L) - 2 \rk(\im(F))$.
\end{lem}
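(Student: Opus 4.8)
The plan is to unfold the exact triangle into its associated long exact sequence and then apply the rank--nullity theorem a few times. Throughout, $\rk$ denotes the total dimension of a graded vector space (finite in every application of this lemma), and I note at the outset that a grading shift $[1]$ merely relabels the grading, so it leaves unchanged the total dimension of a graded vector space as well as the dimensions of the kernel and image of any map. Writing $G \colon L \to M$ for the second edge of the triangle and $H \colon M \to K[1]$ for the connecting edge (so that the $[1]$ decorates $H$), exactness of the triangle is equivalent to exactness of
\[
\cdots \to K \xrightarrow{F} L \xrightarrow{G} M \xrightarrow{H} K[1] \xrightarrow{F[1]} L[1] \to \cdots
\]
as a long exact sequence of graded vector spaces.

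Next I would apply rank--nullity to $G$, obtaining $\rk(L) = \dim\ker(G) + \dim\im(G)$. Exactness at $L$ gives $\ker(G) = \im(F)$ and exactness at $M$ gives $\im(G) = \ker(H)$, so this becomes $\rk(L) = \dim\im(F) + \dim\ker(H)$. Applying rank--nullity to $H$ gives $\rk(M) = \dim\ker(H) + \dim\im(H)$. The only remaining ingredient is exactness at $K[1]$, which says $\im(H) = \ker(F[1])$; since $F[1]$ is just $F$ with shifted grading, $\dim\im(H) = \dim\ker(F) = \rk(K) - \dim\im(F)$ by rank--nullity for $F$.

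Substituting the two expressions for $\dim\ker(H)$ and $\dim\im(H)$ into the identity for $\rk(M)$ then yields
\[
\rk(M) = \bigl(\rk(L) - \dim\im(F)\bigr) + \bigl(\rk(K) - \dim\im(F)\bigr) = \rk(K) + \rk(L) - 2\,\rk(\im(F)),
\]
which is the assertion. I do not expect any real obstacle here: the content is simply that an exact triangle unrolls to a long exact sequence, combined with elementary bookkeeping via rank--nullity, the only point requiring a word of care being that grading shifts have no effect on any of the dimension counts used above.
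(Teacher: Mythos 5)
Your proof is correct and complete. The paper itself gives no proof of this lemma, simply describing it as a ``basic observation'' reproduced from \cite{maydanskiy}; your argument---unrolling the triangle into its long exact sequence and applying rank--nullity at each vertex, with the observation that the shift $[1]$ does not affect any dimension count---is exactly the standard proof one would supply to fill that gap.
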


We shall consider this lemma applied to the following triangle coming from the
long exact sequence in \cite{seidelles}.
\[
\xymatrix{
{HF_t^*(V_0,V_j) \otimes HF_t^*(V_j,V_0)} \ar[r] & {HF_t^*(V_j,V_j)} \ar[d]  \\
 & {HF_t^*(\tau_{V_0}V_j, V_j).} \ar[ul]
}
\]
\begin{rmk}
To apply Seidel's long exact sequence in this nonexact setting, we can no
longer filter the Floer cochain groups by the symplectic action, as discussed
in Section \ref{sec:fukaya}.  However, we can introduce a filtration by powers
of our formal Novikov parameter $q$.  This will give us an appropriate
$\ZZ$-filtration as the energy spectrum of the (unperturbed) holomorphic curves
$u$ will form a discrete set.
\end{rmk}

Consider now the Lagrangian balls $B_i$ associated to the dotted paths in
Figure \ref{fig:many} and suppose there is an $i$ such that $HF_t^*(V_j,B_i)$
is nonzero.  Then the product
\[
HF_t^*(V_0,V_j) \otimes HF_t^*(V_j,V_0) \to HF_t^*(V_j,V_j) \cong H^*(S^2)
\]
does not contain the identity in its image, because if it did, then the
composite
\[
HF_t^*(V_j,B_i) \otimes HF_t^*(V_0,V_j) \otimes HF_t^*(V_j,V_0) \to
HF_t^*(V_j,B_i)
\]
would hit the identity despite factoring through $HF_t^*(V_0,B_i)$ which is
zero as these Lagrangians are disjoint.  Here we use the fact that the product
structure on Floer cohomology is associative.  However, the fundamental class
of $H^2(S^2)$ is in the image, by Poincar\'{e} duality for Floer cohomology.

So, when we consider the ranks of the groups in the above triangle, we see that
\begin{lem}
If $HF_t^*(V_j,B_i)\neq 0$ for any $i$, then $\rk HF_t^*(\tau_{V_0} V_j,
V_j)=4$.
\end{lem}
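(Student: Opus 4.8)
The plan is to feed the three ranks occurring in the distinguished triangle displayed above into the preceding rank identity. Write $F$ for the product map; the triangle takes the form $K\to L\to M\to K[1]$ with $K=HF_t^*(V_0,V_j)\otimes HF_t^*(V_j,V_0)$, $L=HF_t^*(V_j,V_j)$, $M=HF_t^*(\tau_{V_0}V_j,V_j)$, and the first map equal to $F$. The basic observation then gives
\[
\rk HF_t^*(\tau_{V_0}V_j,V_j)=\rk\bigl(HF_t^*(V_0,V_j)\otimes HF_t^*(V_j,V_0)\bigr)+\rk HF_t^*(V_j,V_j)-2\,\rk(\im F).
\]
Since $V_j$ is a Lagrangian $2$-sphere in a symplectic manifold with vanishing first Chern class, the results of Section~\ref{sec:discs} give $HF_t^*(V_j,V_j)\cong H^*(S^2;\Lambda_{\ZZ/2})$, so $\rk HF_t^*(V_j,V_j)=2$.

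Next I would pin down $\rk(\im F)=1$. The map $F$ is a degree-zero morphism of $\ZZ$-graded $\Lambda_{\ZZ/2}$-modules, so $\im F$ is a graded subspace of $H^0(S^2)\oplus H^2(S^2)$ and splits accordingly as $(\im F\cap H^0)\oplus(\im F\cap H^2)$. By Poincar\'e duality for Floer cohomology the generator of $H^2(S^2)$ lies in $\im F$, so $\im F\cap H^2=H^2$; and under the hypothesis $HF_t^*(V_j,B_i)\neq 0$ the identity element, which generates $H^0(S^2)$, is not in $\im F$, by the previous lemma, so $\im F\cap H^0=0$. Hence $\rk(\im F)=1$, and substituting this and $\rk HF_t^*(V_j,V_j)=2$ into the identity above leaves
\[
\rk HF_t^*(\tau_{V_0}V_j,V_j)=\rk HF_t^*(V_0,V_j)\cdot\rk HF_t^*(V_j,V_0)=\bigl(\rk HF_t^*(V_0,V_j)\bigr)^2,
\]
where the last equality uses that Poincar\'e duality forces $\rk HF_t^*(V_0,V_j)=\rk HF_t^*(V_j,V_0)$.

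It therefore remains to show $\rk HF_t^*(V_0,V_j)=2$, and this is the heart of the matter. I would work inside the fibre $M_{n+1}$, which is itself the total space of the Lefschetz fibration pictured at the start of Section~\ref{sec:maydanskiyseidel}, in which $V_0$ and $V_j$ are the matching cycles over $\gamma_0$ and $\gamma_j$. Since both of these paths avoid the shaded region where the deforming $2$-form is supported, a maximum principle in the base confines every pseudoholomorphic strip relevant to $HF_t^*(V_0,V_j)$ to the subset where $\o_t$ is exact, so the group is computed exactly as in the exact $A_{n+1}$-Milnor-fibre model. In that model the Euler characteristic count $\chi\bigl(HF^*(V_0,V_j)\bigr)=\pm[V_0]\cdot[V_j]$ together with Poincar\'e duality constrains the total rank modulo $4$, and putting $\gamma_0$ and $\gamma_j$ in minimal position bounds it from above; the resulting Khovanov--Seidel-type computation, as in \cite{maydanskiy}, yields $\rk HF^*(V_0,V_j)=2$. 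Feeding this into the second displayed identity gives $\rk HF_t^*(\tau_{V_0}V_j,V_j)=4$. The main obstacle is precisely this last step: one must check both that the maximum principle still localizes all contributing curves after the nonexact deformation, and that the $A_{n+1}$-fibre computation pins the rank down to exactly $2$ rather than a larger value congruent to $2$ modulo $4$.
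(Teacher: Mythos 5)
Your reduction of the statement to $\rk HF_t^*(V_0,V_j)=2$ is correct and agrees with what the paper's (very terse) text presupposes: from the rank identity, $\rk HF_t^*(V_j,V_j)=2$ together with $\rk(\im F)=1$ leaves exactly $\rk HF_t^*(V_0,V_j)\cdot\rk HF_t^*(V_j,V_0)$, and your gradedness argument for $\rk(\im F)=1$ is a clean way to make the ``identity out, fundamental class in'' observation precise. So the first three-quarters of the proof is fine.

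The final step, however, is where your argument goes wrong, and it goes wrong in a specific way. You assert that $\gamma_0$ and $\gamma_j$ ``avoid the shaded region where the deforming $2$-form is supported,'' so that a maximum principle in the base confines all relevant strips to the locus where $\o_t$ is exact. That was true in Section~\ref{sec:maydanskiyseidel} for the $X_2$ setup, where the nonexactness was concentrated in a region $R$ off to the right of the configuration of paths; but in the $X_n^j$ construction of Section~\ref{sec:many} there is no such region. The deforming forms are supported over thin vertical strips \emph{between} the critical values $0,1,\ldots,n+1$, and both $\gamma_0$ and $\gamma_j$ run from $0$ to $n+1$ and therefore pass through every one of those strips. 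So the maximum principle does not push the Floer strips for $CF_t(V_0,V_j)$ into an exact region, and one cannot simply quote the exact-case Khovanov--Seidel computation. (Indeed, if one could, the whole argument of this section would be unnecessary.)

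What actually pins down $\rk HF_t^*(V_0,V_j)=2$ is a direct intersection count rather than an energy-localisation argument. The matching paths $\gamma_0$ and $\gamma_j$ share their two endpoints $0$ and $n+1$, and cross once between the critical values $j$ and $j+1$. Over each shared endpoint the two matching cycles meet transversally at the Lefschetz critical point, contributing one generator each; over the interior crossing the two matching cycles restrict, in the fibre $T^*S^1$, to level circles $\|p\|=c_0$ and $\|p\|=c_j$ for distinct constants $c_0\neq c_j$, so they are disjoint there. Hence $V_0\pitchfork V_j$ consists of exactly two points, and their Maslov gradings differ by $n=2$ (the dimension of the Lagrangian), so no Floer differential is possible for degree reasons and $\rk HF_t^*(V_0,V_j)=\rk CF_t(V_0,V_j)=2$ for all $t$. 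This holds with no appeal to exactness, which is exactly what the nonexact setting requires. Substituting then gives $\rk HF_t^*(\tau_{V_0}V_j,V_j)=4$. (To be fair, the paper itself does not spell this step out, so identifying it as the crux is the right instinct; but the maximum-principle route is not available here.)
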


We now consider the triangle \eqref{eqn:triangle} relating the first few terms
in the system of groups computing $HW_t^*(\Delta_j,\Delta_j)$.  Again, by
computing ranks we see that, if $\rk HF_t^*(\tau_{V_0} V_j, V_j)=4$, then the
rank of the image of the horizontal map must be zero, and therefore take 1 to
0, which in turn forces $HW_t^*(\Delta_j,\Delta_j)=0$.  We conclude

\begin{lem}
If $HF_t^*(V_j,B_i) \neq 0$ for any $i$, then there exists no homologically
essential Lagrangian sphere.
\end{lem}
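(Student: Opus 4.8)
The plan is to argue by contradiction, combining the two preceding lemmas with the wrapped exact triangle \eqref{eqn:triangle} and the standard fact that a homologically essential Lagrangian sphere forces a nonvanishing wrapped group. The argument mirrors the rank bookkeeping used by Maydanskiy, adapted to the deformed setting.

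First I would assume $HF_t^*(V_j,B_i)\neq 0$ for some $i$ and suppose, for contradiction, that $(X^j_n,\o_t)$ contains a homologically essential Lagrangian sphere $L$. Exactly as at the start of this section (and as in Section~\ref{sec:maydanskiyseidel}), the topology of $X^j_n$ — diffeomorphic to $T^*S^3$ with $n$ $2$-handles, with $\Delta_j$ a co-core of one handle — forces $L\cdot\Delta_j\neq 0$. Since $L$ is compact we have $HF_t^*(L,\Delta_j)\cong HW_t^*(L,\Delta_j)$, whose Euler characteristic is $L\cdot\Delta_j\neq 0$, so this group is nonzero; as it is a module over the unital ring $HW_t^*(\Delta_j,\Delta_j)$, we conclude $HW_t^*(\Delta_j,\Delta_j)\neq 0$.

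Next I would derive the opposite inequality. By the previous lemma, $HF_t^*(V_j,B_i)\neq 0$ forces $\rk HF_t^*(\tau_{V_0}V_j,V_j)=4$, and since the outer monodromy is, up to isotopy, $\tau_{V_0}\circ\tau_{V_j}$ with $HF_t^*(\mu(V_j),V_j)\cong HF_t^*(\tau_{V_0}V_j,V_j)$ as noted above, the bottom group of \eqref{eqn:triangle} has rank $4$. In the rank identity for that triangle, the first group $HF_t^*(\Delta_j,\Delta_j^{\epsilon})$ is $\Lambda_{\ZZ/2}$ concentrated in degree $0$ (the thimble is a ball), hence has rank $1$; tracking the $\ZZ$-grading, which is available because $c_1(X^j_n)=0$, one checks that the connecting map out of the rank-$4$ group surjects onto this degree-$0$ unit, so the horizontal map $HF_t^*(\Delta_j,\Delta_j^{\epsilon})\to HF_t^*(\Delta_j,\Delta_j^{2\pi+\epsilon})$ vanishes. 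Therefore the unit of $HW_t^*(\Delta_j,\Delta_j)$ already dies at the first stage of the directed system, so $HW_t^*(\Delta_j,\Delta_j)=0$, contradicting the previous step. This establishes the lemma.

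I expect the delicate point to be the grading bookkeeping inside \eqref{eqn:triangle}: one must show not merely that the group has rank $4$, but that its degrees are distributed so that the unit is killed — in contrast with the ``$V_j$ can be lifted over the critical points'' case, where the analogous group has rank $2$ and the unit survives (giving a sphere). This is exactly where extra care over the nonexactness of $\o_t$ is required: it is why Section~\ref{sec:floer} set up Floer cohomology without an exactness hypothesis, why Seidel's long exact sequence is re-run with the $\ZZ$-filtration by powers of the Novikov parameter $q$ in place of the action filtration, and why the test invoked in the preceding lemmas — nonvanishing of $HF_t^*(V_j,B_i)$ together with disjointness of $V_0$ and $B_i$ — is purely homological and so survives the deformation.
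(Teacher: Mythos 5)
Your proposal follows essentially the same route as the paper: assume a homologically essential sphere exists to force $HW_t^*(\Delta_j,\Delta_j)\neq 0$, feed the rank-4 conclusion of the preceding lemma into the exact triangle \eqref{eqn:triangle}, and deduce that the first horizontal map must vanish so the unit already dies at the first stage of the directed system. The paper phrases the decisive step as a rank computation in the triangle (via the earlier rank lemma) rather than an explicit grading chase, but the two formulations are equivalent here since $HF_t^*(\Delta_j,\Delta_j^{\epsilon})$ has rank one in a single degree, and the overall logic matches.
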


For $i>j$, $V_j$ and $B_i$ are disjoint so $HF^*_t(V_j, B_i)=0$ is automatic.
For $i \leq j$, the criterion that $HF_t^*(V_j,B_i)$ be nonzero corresponds to
\[
\sum^k_r v_r \neq 0 \mbox{ for all } k\leq i
\]
since, in the fibre where the paths defining $V_j$ and $B_i$ intersect we
either get disjoint circles or instead two copies of some circle $C$ whose self-Floer
cohomology $HF^*_t(C,C) \cong H^*(C)$ is nonzero.

\begin{rmk}
In particular, the above argument shows that, in the undeformed case,
$HW^*(\Delta_j,\Delta_j)=0$.  A similar argument also shows that
$HW^*(\Delta_0,\Delta_0)=0$, which, by Lemma \ref{thm:shhw}, proves that, for
our undeformed exact symplectic manifolds $SH^*(X_n^i)=0$ for all $i$.
\end{rmk}

Therefore, if we consider the $(n-1)$-Grassmanian invariant
$\Gamma_{n-1}(X_n^j)$, we see that the planes for which we get no Lagrangians
appearing are, in our choice of basis, precisely those $(n-1)$-planes defined
by any one of the equations
\[
\sum^k_r v_r = 0 \mbox{ for some } k\leq j,
\]
so that $\Gamma_{n-1}(X_n^j)$ is a set consisting of $j$ points.

We now have, for $1 \leq i \leq n$, exact symplectic manifolds such $X^i_n$ is
not symplectomorphic to $X_n^j$ for $i \neq j$, even though neither contains
any exact Lagrangian submanifolds.  Our final manifold $(X^{n+1}_n, \o_{n+1})$
simply comes from adding $n$ handles to some exotic Maydanskiy-Seidel example,
just as in Figure \ref{fig:maydanskiyseidel}.  The same argument as in Section
\ref{sec:maydanskiyseidel} will show that $\Gamma_{n-1}(X^{n+1}_n,
\o_{n+1})=Gr_{n-1}(\RR^n)$, so $X^{n+1}_n$ cannot be symplectomorphic to any of
the $X^i_n$ for $i \leq n$.  This completes the proof of Theorem
\ref{thm:many}.


\begin{thebibliography}{10}

\bibitem{abouzaidseidelrecombination}
M.~Abouzaid and P.~Seidel.
\newblock Altering symplectic manifolds by homologous recombination.
\newblock Preprint arXiv:1007.3281, 2010.

\bibitem{abouzaidseidelviterbo}
M.~Abouzaid and P.~Seidel.
\newblock An open string analogue of {V}iterbo functoriality.
\newblock {\em Geom. Topol.}, 14(2):627--718, 2010.

\bibitem{joyceimmersed}
M.~Akaho and D.~Joyce.
\newblock Immersed {L}agrangian {F}loer theory.
\newblock Preprint arXiv:0803.0717, 2008.

\bibitem{amp}
D.~Auroux, V.~Mu{\~n}oz, and F.~Presas.
\newblock Lagrangian submanifolds and {L}efschetz pencils.
\newblock {\em J. Symplectic Geom.}, 3(2):171--219, 2005.

\bibitem{bee}
F.~Bourgeois, Y.~Eliashberg, and T.~Ekholm.
\newblock Effect of {L}egendrian surgery.
\newblock Preprint arXiv:0911.0026, 2009.

\bibitem{cieliebakhandle}
K.~Cieliebak.
\newblock Handle attaching in symplectic homology and the chord conjecture.
\newblock {\em J. Eur. Math. Soc.}, 4:115--–142, 2002.

\bibitem{floerfredholm}
A.~Floer.
\newblock A relative {M}orse index for the symplectic action.
\newblock {\em Comm. Pure Appl. Math.}, 41(4):393--407, 1988.

\bibitem{floergradientflow}
A.~Floer.
\newblock The unregularized gradient flow of the symplectic action.
\newblock {\em Comm. Pure Appl. Math.}, 41(6):775--813, 1988.

\bibitem{frauenfelder}
U.~Frauenfelder.
\newblock Gromov convergence of pseudoholomorphic disks.
\newblock {\em J. Fixed Point Theory Appl.}, 3(2):215--271, 2008.

\bibitem{fooo1}
K.~Fukaya, Y.-G. Oh, H.~Ohta, and K.~Ono.
\newblock {\em Lagrangian intersection {F}loer theory: anomaly and obstruction.
  {P}arts {I} and {II}}, volume~46 of {\em AMS/IP Studies in Advanced
  Mathematics}.
\newblock American Mathematical Society, Providence, RI, 2009.

\bibitem{gabriel}
P.~Gabriel.
\newblock Des cat\'egories ab\'eliennes.
\newblock {\em Bull. Soc. Math. France}, 90:323--448, 1962.

\bibitem{gromov}
M.~Gromov.
\newblock Pseudoholomorphic curves in symplectic manifolds.
\newblock {\em Invent. Math.}, 82(2):307--347, 1985.

\bibitem{hofersalamon}
H.~Hofer and D.~A. Salamon.
\newblock Floer homology and {N}ovikov rings.
\newblock In {\em The {F}loer memorial volume}, volume 133 of {\em Progr.
  Math.}, pages 483--524. Birkh\"auser, Basel, 1995.

\bibitem{joycecounting}
D.~Joyce.
\newblock On counting special {L}agrangian homology 3-spheres.
\newblock In {\em Topology and geometry: commemorating {SISTAG}}, volume 314 of
  {\em Contemp. Math.}, pages 125--151. Amer. Math. Soc., Providence, RI, 2002.

\bibitem{khovanovseidel}
M.~Khovanov and P.~Seidel.
\newblock Quivers, {F}loer cohomology, and braid group actions.
\newblock {\em J. Amer. Math. Soc.}, 15(1):203--271 (electronic), 2002.

\bibitem{maydanskiy}
M.~Maydanskiy.
\newblock {\em Exotic symplectic manifolds from {L}efschetz fibrations}.
\newblock PhD thesis, MIT, 2009.

\bibitem{maydanskiyseidel}
M.~Maydanskiy and P.~Seidel.
\newblock Lefschetz fibrations and exotic symplectic structures on cotangent
  bundles of spheres.
\newblock {\em J. Topol.}, 3(1):157--180, 2010.

\bibitem{mcduffsalamonintro}
D.~McDuff and D.~Salamon.
\newblock {\em Introduction to symplectic topology}.
\newblock Oxford Mathematical Monographs. The Clarendon Press Oxford University
  Press, New York, second edition, 1998.

\bibitem{mcduffsalamonjholo}
D.~McDuff and D.~Salamon.
\newblock {\em {$J$}-holomorphic curves and symplectic topology}, volume~52 of
  {\em American Mathematical Society Colloquium Publications}.
\newblock American Mathematical Society, Providence, RI, 2004.

\bibitem{mcleanlefschetz}
M.~McLean.
\newblock Lefschetz fibrations and symplectic homology.
\newblock {\em Geom. Topol.}, 13(4):1877--1944, 2009.

\bibitem{moser}
J.~Moser.
\newblock On the volume elements on a manifold.
\newblock {\em Trans. Amer. Math. Soc.}, 120:286--294, 1965.

\bibitem{oh}
Y.-G. Oh.
\newblock Floer cohomology of {L}agrangian intersections and pseudo-holomorphic
  disks. {I}.
\newblock {\em Comm. Pure Appl. Math.}, 46(7):949--993, 1993.

\bibitem{rittertqft}
A.~F. Ritter.
\newblock Topological quantum field theory structure on symplectic cohomology.
\newblock Preprint arXiv:1003.1781, 2010.

\bibitem{ritterdeformedsh}
A.~F. Ritter.
\newblock Deformations of symplectic cohomology and exact {L}agrangians in
  {ALE} spaces.
\newblock {\em Geom. Funct. Anal.}, 20(3):779--816, 2010.

\bibitem{robbinsalamon}
J.~W. Robbin and D.~A. Salamon.
\newblock Asymptotic behaviour of holomorphic strips.
\newblock {\em Ann. Inst. H. Poincar\'e Anal. Non Lin\'eaire}, 18(5):573--612,
  2001.

\bibitem{seidelgradings}
P.~Seidel.
\newblock Graded {L}agrangian submanifolds.
\newblock {\em Bull. Soc. Math. France}, 128(1):103--149, 2000.

\bibitem{seidelles}
P.~Seidel.
\newblock A long exact sequence for symplectic {F}loer cohomology.
\newblock {\em Topology}, 42(5):1003--1063, 2003.

\bibitem{seidelbible}
P.~Seidel.
\newblock {\em Fukaya categories and {P}icard-{L}efschetz theory}.
\newblock Zurich Lectures in Advanced Mathematics. European Mathematical
  Society (EMS), Z\"urich, 2008.

\bibitem{seidelsmithramanujam}
P.~Seidel and I.~Smith.
\newblock The symplectic topology of {R}amanujam's surface.
\newblock {\em Comment. Math. Helv.}, 80(4):859--881, 2005.

\bibitem{viterbosh1}
C.~Viterbo.
\newblock Functors and computations in {F}loer homology with applications. {I}.
\newblock {\em Geom. Funct. Anal.}, 9(5):985--1033, 1999.

\bibitem{weinsteinhandle}
A.~Weinstein.
\newblock Contact surgery and symplectic handlebodies.
\newblock {\em Hokkaido Math. J.}, 20(2):241--251, 1991.

\bibitem{welschinger}
J.-Y. Welschinger.
\newblock Open strings, {L}agrangian conductors and {F}loer functor.
\newblock Preprint arXiv:0812.0276, 2008.

\end{thebibliography}
\end{document}